\newlength\horspace
\newlength\verspace
\renewcommand{\v}[1][1.0]{\vspace*{#1\verspace}\xspace}
\newtheorem{teor}{Theorem}[section]
\newtheorem{prop}[teor]{Proposition}
\newtheorem{coroll}[teor]{Corollary}
\newtheorem{lemm}[teor]{Lemma}
\newtheorem{prop.}[teor]{Proposition}
\theoremstyle{definition}
\newtheorem{defne}[teor]{Definition}
\newtheorem{rem}[teor]{Remark}
\newtheorem{exampl}[teor]{Example}
\newtheorem{exam.}[teor]{Example}
\newtheorem{def.}[teor]{Definition}
\def\nameit#1{\textrm{#1}~}
\def\thex{\nameit{Theorem}}
\def\prox{\nameit{Proposition}}
\def\corx{\nameit{Corollary}}
\def\lemx{\nameit{Lemma}}
\def\dfn#1{{\itshape #1}}
\newenvironment{enumT}{\begin{enumerate}[itemsep=2.5mm,label=$($\hspace{-0.1ex}\roman*\hspace{0.13ex}$)$]}{\end{enumerate}}
\DeclareFontFamily{OT1}{pzc}{}
\DeclareFontShape{OT1}{pzc}{m}{it}{<->s*[1.19]pzcmi7t}{}
\DeclareMathAlphabet{\mathpzc}{OT1}{pzc}{m}{it}
\newcommand{\catfont}[1]{\ensuremath{\mathpzc{#1}}\xspace}
\newcommand{\Open}{{\rm Open}}
\renewcommand{\o}[2][]{\catfont{o}_{#1}(#2)}
\newcommand{\cl}[2][]{\catfont{c}_{#1}(#2)}
\newcommand{\st}{^{\ast}}
\newcommand{\void}{\emptyset}
\newcommand{\cont}{\subseteq}
\newcommand{\contain}{\supseteq}
\newcommand{\inters}{\cap}
\newcommand{\meet}{\wedge}
\newcommand{\join}{\vee}
\newcommand{\bigjoin}{\bigvee}
\newcommand{\difference}{\setminus}
\newcommand{\closure}[2][]{\overline{#2}^{#1}}
\newcommand{\bd}[1]{\operatorname{bd}\left(#1\right)}
\newcommand{\interior}[1]{\operatorname{Int}\left(#1\right)}
\def\set#1#2{\left\{{#1}\left.\right|\,{#2}\right\}}
\newcommand{\fami}[2]{\left\{{#1}\right\}_{#2}}
\renewcommand{\Roman}[1]{\uppercase\expandafter{\romannumeral #1\relax}}
\newcommand{\aR}[2][]{%
	\ext@arrow 0055{\Rightarrowfill@}{#1}{#2}}
\newcommand{\implic}{\aR{}}
\def\iff{\Leftrightarrow}
\newcommand{\Rom}[1]{\ensuremath{(\Roman{#1})}}
\newcommand{\Romprime}[1]{\ensuremath{(\Roman{#1}')}}
\newcommand{\Romplus}[1]{\ensuremath{(\Roman{#1}^{+})}}
\begin{document}

\title[Unicoherence in Locales]{Unicoherence in Locales}
\author[E.~Caviglia]{Elena~Caviglia}
\address{Stellenbosch University, Department of Mathematical Sciences, Stellenbosch, South Africa. And National Institute for Theoretical and Computational Sciences (NITheCS), South Africa}
\email{elena.caviglia@outlook.com}
\author[L.~Mesiti]{Luca~Mesiti}
\address{University of KwaZulu-Natal, School of Agriculture and Science, Discipline of Mathematics, Private Bag X54001, Durban 4000, South Africa}
\email{luca.mesiti@outlook.com}
\author[C.~Rathilal]{Cerene~Rathilal}
\address{University of KwaZulu-Natal, School of Agriculture and Science, Discipline of Mathematics, Private Bag X54001, Durban 4000, South Africa. and National Institute for Theoretical and Computational Sciences (NITheCS), South Africa}
\email{rathilalc@ukzn.ac.za}
\keywords{Locale, unicoherence, sublocale, connectedness, separation}
\subjclass[2020]{06D22, 54F55, 54D05}

\begin{abstract}
	In this paper, we generalize the concept of unicoherence to the context of frames. Unicoherence, originally introduced by Kuratowski, is a connectedness property that is well studied in classical topology and used to detect holes of a space. We extend the notion of unicoherence to locales and we then investigate its properties. In particular, we prove that many of the known characterizations of unicoherence for topological spaces extend to the setting of locales. Some of these characterizations interestingly involve separation properties for locales.
\end{abstract}

\maketitle

\setcounter{tocdepth}{1}
\tableofcontents

\section{Introduction}

Unicoherence is a connectedness property which was first defined by Kuratowski \cite{Kur26} in 1926. Intuitively, a space is unicoherent if it is connected and does not contain any holes. So for example all euclidean spaces are unicoherent while the circle is not. Kuratowski introduced this topological notion to better understand and characterize the sphere $S^2$. Indeed, the sphere is unicoherent although the circle is not. Later, unicoherent topological spaces have been extensively studied, by (but not limited to) Stone \cite{Sto49}, Wallace \cite{Wallace}, Eilenberg \cite{Eil36}, Borsuk (\cite{Bor31}, \cite{Bor33}, \cite{Borsuk33}) and more recently Dickman and Rubin \cite{DickmanRub72}. These investigations led to numerous interesting equivalent characterizations of unicoherence for topological spaces, many of which are collected in \cite{GarIll89}. Among such characterizations, Brouwer property is of particular relevance, capturing unicoherence in terms of a separation condition for closed subsets. In this paper, we extend the notion of unicoherence and many of its known equivalent characterizations to the setting of locales.

The development of pointfree topology, or locale theory, offers an algebraic framework that broadens topological ideas without directly referring to points \cite{topwop}. With this approach, one works with the lattice of open sets of a topological space rather than the points themselves. The objects of study in pointfree topology are called \emph{frames} or \emph{locales}. Many key topological concepts such as compactness, connectedness and separation properties have been successfully studied via locales (see Johnstone \cite{stonespaces},  Pultr and Picado \cite{topwop}). The study of connectedness in locales has created a rich parallel theory that reflects the classical setting while revealing constructive proofs.

Our work extends unicoherence to locales and studies its properties, generalizing classical results and developing new tools for pointfree topology. We present several equivalent characterizations of unicoherence for locales, exploring how unicoherence relates to sublocales, boundary operations, the components of open sublocales and separation conditions. 
 
 To present this work, we begin by recalling relevant background material on point-free topology. For details not provided here, we refer the reader to \cite{bannotes}, \cite{stonespaces} and \cite{topwop}.\\

A \textit{frame or locale} $L$ is a complete lattice which satisfies the infinite distributive law: 
\begin{center}$x \wedge \bigvee S = \bigvee \{ x \wedge s | s \in S\}$, \mbox{\;\;\;} for all $x \in L, S \subseteq L.$\end{center}The top element of $L$ is denoted by $1_L$ and the bottom by $0_L$.

If $X$ is a topological space, then $\mathcal{O}X = \{ U \subseteq X |\; U \mbox{ is open}\}$ is a frame with partial order $\subseteq$ and intersection as a binary meet. We call $\mathcal{O}X$ the \emph{frame of open sets} of $X$.

The \emph{pseudocomplement} of $a \in L$ is denoted $a^*$ and is characterized by the following formula $$a^* =\bigvee \{ x \in L \;| \; a \wedge x = 0\}.$$

 An element $x$ in a frame $L$ is said to be connected if whenever $x = b \vee c$ with $b\wedge c = 0_L$ we have either $b=0_L$ or $c=0_L$. A frame $L$ is connected if its top element $1_L$ is connected and  is  locally connected provided each element in the frame can be written as the join of connected elements.

If $L$ is a frame and $S \subseteq L$, then $S$ is called a \emph{sublocale} if $S$ is closed under arbitrary meets and if for $x \in L$ and $s \in S$, $(x \rightarrow s) \in S$. We denote the bottom element of $S$ by $0_S$, where $0_S =\bigwedge S$. The \emph{trivial} (also known as \emph{void}) sublocale is $\{1_L\}$ which we shall denote by $\emptyset.$ An arbitrary intersection (or meet) of sublocales, is a sublocale. The collection of all sublocales of a frame $L$, denoted by $\mathcal{S}(L)$, is a complete lattice $(\mathcal{S}(L), \subseteq)$.
For $S_i \in \mathcal{S}(L)$ for $i \in I$, we recall that the join of sublocales is defined as follows:
$$ \bigvee_{i \in I} S_i = \{ \bigwedge M \; | \; M \subseteq  \bigcup_{i \in I} S_i \}.$$
We note that the join of sublocales is indeed a sublocale. The \emph{open sublocale} associated with any $a \in L$ is : $ \mathfrak{o}_L(a) = \{ x \in L \;| \; a \rightarrow x = x \}$.
The \emph{closed sublocale} associated with any $a \in L$ is : $\mathfrak{c}_L(a) =\; \uparrow a = \{ x \in L \;| \; a \le x\}$. In the absence of ambiguity, we denote the open sublocale of $a$ in $L$ by $ \mathfrak{o}(a)$ and the closed sublocale of $a$ in $L$ by $\mathfrak{c}(a)$.
The \emph{closure} of a sublocale $S$ in $L$, denoted $\overline{S}_L$ (or $\overline{S}$ for simplicity), is defined as $\overline{S} = \;  \uparrow \left(\bigwedge S\right).$  We also note that if $S$ is connected, then $\overline{S}$ is connected. The  \emph{supplement} of $S$, denoted  $\sup(S)$, is given by  $\sup(S) = \bigwedge\{ T\; | \; T \mbox{ is a sublocale of } L,\; T \vee S = L \}$, and  $S$ is called \emph{complemented} sublocale in $L$ if, $S \wedge \sup(S) = \{1_L\} (= \emptyset)$. The \emph{interior} of $S$, denoted by $\interior{S}$, is $\interior{S} = \bigvee \{ U \in \mathcal{S}(L) |\; U \mbox{ is open and } U \subseteq S\}$ and correspondingly the \emph{boundary} of a sublocale $S$, denoted $\bd{S}$, is $\bd{S} = \overline{S}\setminus \interior{S}.$


\section{Properties of Sublocales}

In this section, we recall and outline several properties of sublocales. These properties will be essential for our subsequent study of connectedness and unicoherence in locales. Amongst the various properties, we will investigate the behavior of open and closed sublocales, along with their interaction under closure, interior, and boundary operations.

\begin{defne}[\cite{topwop}]
    A locale $L$ is \dfn{normal} if for every $a,b\in L$ such that $a\join b=1$ there exist $u,v\in L$ such that $u\meet v=0$, $a\join u=1$ and $b\join v=1$.

    Equivalently, $L$ is normal if for every $\cl{a},\cl{b}\cont L$ such that $\cl{a}\inters \cl{b}=\void$ there exist $\o{u},\o{v}\cont L$ such that $\o{u}\inters \o{v}=\void$, $\cl{a}\cont \o{u}$ and $\cl{b}\cont \o{v}$.
\end{defne}

We now present proofs of well-known facts about sublocales that are not provided in the literature. These facts will be used in our later results.

\begin{prop} \label{closedofsub}
	Let $S\cont L$ be a sublocale. A closed sublocale $\cl[S]{u}$ of $S$ is the same as $\cl[S]{u}\inters S$. An open sublocale $\o[S]{u}$ of $S$ is the same as $\o{u}\inters S$.

	Let $T\cont S$ be a sublocale. Then the closure $\closure[S]{T}$ of $T$ in $S$ is equal to $\closure{T}\inters S$.
\end{prop}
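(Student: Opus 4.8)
The plan is to verify the three claims by unwinding the relevant definitions, using the fact that, for a sublocale $S\cont L$, meet with $S$ realizes the comeager/restriction functor $\mathcal{S}(L)\to\mathcal{S}(S)$.

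First I would treat the closed sublocale claim. For $u\in S$, the closed sublocale of $S$ determined by $u$ is $\cl[S]{u}=\{s\in S\mid u\leq s\}$. Since the order on $S$ is inherited from $L$, this is visibly $\{x\in L\mid u\leq x\}\inters S=\cl{u}\inters S$. (One should check $\cl{u}\inters S$ is a sublocale of $S$, but that is immediate as $\cl{u}$ is a sublocale of $L$ and the intersection of two sublocales is a sublocale.) The only subtlety is making sure $u$ being an element of $S$ versus an element of $L$ causes no mismatch: the condition $u\leq x$ is computed in $L$, and since $S\cont L$ as a subposet this is exactly the condition defining $\cl[S]{u}$.

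Next, the open sublocale claim. Here one uses the standard formula $\o[S]{u}=\{s\in S\mid (u\to s)\in S,\ (u\to s)=s\}$ — more precisely, since $S$ is a sublocale, $u\to s\in S$ for all $s\in S$, and the open sublocale of $S$ at $u$ is $\{s\in S\mid u\to s=s\}$, where $\to$ is the Heyting implication computed in $L$ (it restricts correctly to $S$). Thus $\o[S]{u}=\{x\in L\mid u\to x=x\}\inters S=\o{u}\inters S$, again provided we know the implication in $S$ agrees with that in $L$ on elements of $S$; this is the well-known fact that a sublocale is closed under the Heyting implication $x\to s$ inherited from $L$, which I would cite from the preliminaries. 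I expect this to be entirely routine.

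The closure claim is the one requiring a small argument. By definition $\closure[S]{T}=\cl[S]{0_T}$, the closed sublocale of $S$ at $0_T=\bigwedge T$, which by the first part equals $\cl{0_T}\inters S$. On the other hand $\closure{T}=\cl{0_T}$ as a sublocale of $L$, so $\closure{T}\inters S=\cl{0_T}\inters S$ as well, and the two agree. The one point to be careful about is that $0_T=\bigwedge T$ is computed the same way whether we regard $T$ as a sublocale of $S$ or of $L$ — which holds because arbitrary meets in a sublocale coincide with meets in $L$ — so the "base point" of the closure is unambiguous. That observation is the only genuine content; the rest is bookkeeping, and I would present it in two or three lines.
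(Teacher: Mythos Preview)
Your proof is correct. The first claim is handled exactly as in the paper. For the other two you take a different, more computational route than the paper does.

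For the open sublocale, the paper argues by complements: having shown $\cl[S]{u}=\cl{u}\inters S$, it observes that $\o{u}\inters S$ is a complement of $\cl{u}\inters S$ in $S$, and uniqueness of complements forces $\o[S]{u}=\o{u}\inters S$. Your approach instead uses the explicit description $\o[S]{u}=\{s\in S\mid u\to_S s=s\}$ together with the fact that the Heyting implication of $S$ is the restriction of that of $L$. This is valid, but note that the paper's preliminaries only state that $x\to s\in S$ for $x\in L$, $s\in S$; the agreement $\to_S=\to_L\!\restriction_S$ is a (trivial) consequence you would need to spell out or cite. The paper's complement argument sidesteps this entirely.

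For the closure, the paper uses the minimality characterization: $\closure{T}\inters S$ is closed in $S$ and contains $T$, so $\closure[S]{T}\cont\closure{T}\inters S$; conversely $\closure[S]{T}=\cl{u}\inters S$ for some $\cl{u}\contain T$, and minimality of $\closure{T}$ gives the reverse inclusion. You instead compute both sides as $\cl{0_T}\inters S$ via the formula $\closure{T}=\,\uparrow\!(\bigwedge T)$, using that meets in $S$ and $L$ coincide. Your argument is shorter and entirely adequate here; the paper's minimality argument is the one that would generalize to settings without an explicit closure formula.
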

\begin{proof}
    By definition, given $u\in S$,
    $$\cl[S]{u}=\set{x\in S}{u\leq x}=\set{x\in L}{u\leq x}\inters S=\cl{u}\inters S.$$
    Then $\o[S]{u}$ is the unique complement in $S$ of $\cl[S]{u}=\cl{u}\inters S$. But $\o{u}\inters S$ is also a complement of $\cl{u}\inters S$ in $S$. Whence $\o[S]{u}=\o{u}\inters S$. 

    Consider now $T\cont S$. The closure $\closure[S]{T}$ is the smallest closed sublocale of $S$ containing $T$. Since by the argument above $\closure{T}\inters S$ is closed in $S$ and contains $T$, we have $\closure[S]{T}\cont \closure{T}\inters S$. Notice then that, by the argument above, $\closure[S]{T}$ needs to be of the form $\cl{u}\inters S$, with $T\cont \cl{u}$. But $\closure{T}$ is the smallest closed sublocale of $L$ containing $T$, so $\closure{T}\cont \cl{u}$. Whence $\closure{T}\inters S\cont \cl{u}\inters S=\closure[S]{T}$.
\end{proof}

\begin{prop}\label{closureintopen}
	Let $S\cont L$ be a sublocale and let $\o{u}\cont L$ be an open sublocale. If $\o{u}\inters \closure{S}\neq \void$ then also  $\o{u}\inters {S}\neq \void$. 
\end{prop}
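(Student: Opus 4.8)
The plan is to identify $\closure{S}$ with the closed sublocale $\cl{0_S}$, where $0_S=\bigwedge S$, and then to exhibit an explicit element of $\o{u}\inters S$ — namely $u\rightarrow 0_S$. Recall that $\o{u}=\set{x\in L}{u\rightarrow x=x}$ and that $\closure{S}=\;\uparrow 0_S$ by definition.

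First I would observe that the hypothesis $\o{u}\inters\closure{S}\neq\void$ forces $u\not\leq 0_S$: if instead $u\leq 0_S$, then every $x\in\closure{S}=\;\uparrow 0_S$ satisfies $u\leq x$ and hence $u\rightarrow x=1_L$, so any such $x$ lying also in $\o{u}$ would equal $u\rightarrow x=1_L$, making $\o{u}\inters\closure{S}=\{1_L\}=\void$. Next I would verify the three relevant properties of $u\rightarrow 0_S$. It lies in $S$: indeed $0_S=\bigwedge S\in S$ because sublocales are closed under arbitrary meets, and then $u\rightarrow 0_S\in S$ because sublocales are closed under $x\rightarrow s$ for $x\in L$ and $s\in S$. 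It lies in $\o{u}$: this follows from the frame identity $u\rightarrow(u\rightarrow y)=(u\meet u)\rightarrow y=u\rightarrow y$, immediate from the adjunction $u\meet(-)\dashv u\rightarrow(-)$, applied with $y=0_S$. And it is different from $1_L$: for $u\rightarrow 0_S=1_L$ is equivalent to $u\leq 0_S$, which the previous step ruled out. Hence $u\rightarrow 0_S$ is an element of $\o{u}\inters S$ other than $1_L$, so $\o{u}\inters S\neq\void$.

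I do not expect a genuine obstacle here; the only points needing care are bookkeeping ones, namely keeping the bottom $0_S$ of $S$ distinct from $0_L$ and remembering that $\void$ denotes the one-element sublocale $\{1_L\}$, so that ``$\o{u}\inters S\neq\void$'' must be read as ``$\o{u}\inters S$ contains an element distinct from $1_L$''. The same computation also yields a contrapositive route: if $\o{u}\inters S=\void$, then for every $s\in S$ the element $u\rightarrow s$ lies in $\o{u}\inters S$, hence equals $1_L$, i.e. $u\leq s$; therefore $u\leq\bigwedge S=0_S$, so $\closure{S}=\;\uparrow 0_S\cont\;\uparrow u=\cl{u}$, and since $\o{u}\inters\cl{u}=\void$ we conclude $\o{u}\inters\closure{S}=\void$.
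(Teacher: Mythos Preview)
Your proof is correct. The main argument you give is genuinely different from the paper's: you produce an explicit witness, namely $u\rightarrow 0_S$, and verify directly that it lies in $\o{u}\inters S$ and is distinct from $1_L$. The paper instead argues only the contrapositive, using the complementarity of $\o{u}$ and $\cl{u}$: from $\o{u}\inters S=\void$ it deduces $S\cont\cl{u}$, hence $\closure{S}\cont\cl{u}$ by minimality of the closure, hence $\o{u}\inters\closure{S}=\void$. Your constructive approach is more informative (it names the element), while the paper's is slightly more abstract and avoids any Heyting-algebra computation. The alternative contrapositive route you sketch at the end is essentially the paper's proof, with the step $S\cont\cl{u}$ unpacked elementwise as $u\leq s$ for every $s\in S$.
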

\begin{proof}
We prove that if $\o{u} \inters S=\void$ then $\o{u}\inters \closure{S}= \void$. If $\o{u} \inters S=\void$, then $S \cont \cl{u}$ and so $S \cont \closure{S} \inters \cl{u}$. By definition of closure, this implies $\closure{S}= \closure{S} \inters \cl{u}$ and hence $\o{u}\inters \closure{S}= \void$.
\end{proof}

\begin{lemm} \label{neighinterior}
Let $S\cont L$ be a sublocale and let $x\in L$ with $x\neq 1$. Then $x\in \interior{S}$ if and only if there exists an open sublocale $\o{u}$ such that $x\in\o{u}$ and $\o{u} \cont S$.
\end{lemm}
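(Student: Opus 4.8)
The plan is to prove the two implications separately. The reverse implication is essentially immediate; the forward one reduces to the standard fact that the assignment $a\mapsto\o{a}$ preserves arbitrary joins, so that $\interior{S}$ is itself an open sublocale.

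For the ``if'' direction, suppose $\o{u}$ is an open sublocale with $x\in\o{u}$ and $\o{u}\cont S$. Then $\o{u}$ is one of the open sublocales appearing in the join $\interior{S}=\bigjoin\set{U\in\mathcal{S}(L)}{U\text{ open},\ U\cont S}$, hence $\o{u}\cont\interior{S}$, and in particular $x\in\interior{S}$. This step uses nothing about $x$.

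For the ``only if'' direction, I would first recall, from \cite{topwop}, the identity $\bigjoin_{i\in I}\o{a_i}=\o{\bigjoin_{i\in I}a_i}$ for every family $\fami{a_i}{i\in I}$ in $L$; its short verification rests on $(\bigjoin_i a_i)\rightarrow x=\bigwedge_i(a_i\rightarrow x)$, on $a\rightarrow(a\rightarrow x)=a\rightarrow x$, and on $x\leq a\rightarrow x$. Now set $u:=\bigjoin\set{a\in L}{\o{a}\cont S}$. Since every open sublocale is of the form $\o{a}$ for some $a\in L$, the displayed identity yields
\[
\interior{S}=\bigjoin\set{\o{a}}{a\in L,\ \o{a}\cont S}=\o{u},
\]
and moreover $\o{u}=\interior{S}\cont S$, because a join of sublocales each contained in the sublocale $S$ is again contained in $S$. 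Hence, whenever $x\in\interior{S}=\o{u}$, the open sublocale $\o{u}$ is the desired witness.

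The only non-routine ingredient is the join-preservation identity $\bigjoin_i\o{a_i}=\o{\bigjoin_i a_i}$; I would either cite it from \cite{topwop} or include its two-line proof. Everything else is bookkeeping with the definitions of interior, of the join of sublocales, and of open sublocale. (Incidentally, the hypothesis $x\neq 1$ is not strictly needed for the argument, since $1$ lies in every open sublocale and in $\interior{S}$; it is retained only to emphasise that the relevant elements are those playing the role of ``points''.)
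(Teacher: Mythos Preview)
Your proof is correct and follows essentially the same approach as the paper: both directions hinge on the fact that $\interior{S}$ is itself an open sublocale contained in $S$, so it serves as the witness for the forward implication, while the reverse implication is immediate from the definition of $\interior{S}$ as a join. The only difference is that you spell out explicitly, via the identity $\bigjoin_i\o{a_i}=\o{\bigjoin_i a_i}$, why $\interior{S}$ is open, whereas the paper simply asserts it.
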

\begin{proof}
If $x\in \interior{S}$, then $\interior{S}$ is an open sublocale contained in $S$ that contains $x$. If there exists an open sublocale $\o{u}$ such that $x\in\o{u}$ and $\o{u} \cont S$, then $x\in \interior{S}$ since  $ \interior{S}$ is the biggest open sublocale contained in $S$.
\end{proof}

\begin{lemm}\label{neighboundary}
Let $S\cont L$ be a complemented sublocale and let $x\in \bd{S}$ with $x\neq 1$. Then every open sublocale $\o{u}$ such that $x\in \o{u}$ is such that $\o{u} \inters S \neq \void$ and $\o{u} \inters (L\difference S)\neq \void$.
\end{lemm}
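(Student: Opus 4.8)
The plan is to prove the two non-emptiness assertions separately: the first follows directly from Proposition~\ref{closureintopen}, and the second from Lemma~\ref{neighinterior} together with the distributive law in $\mathcal{S}(L)$.

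First I would record what membership in $\bd{S}$ gives. Writing the open sublocale $\interior{S}$ as $\o{v}$, so that $\bd{S}=\closure{S}\difference\interior{S}=\closure{S}\inters\cl{v}$, the hypothesis $x\in\bd{S}$ yields $x\in\closure{S}$ and $x\in\cl{v}$; since $\o{v}\inters\cl{v}=\void$ and $x\neq 1$, the latter forces $x\notin\o{v}=\interior{S}$. Now take any open sublocale $\o{u}$ with $x\in\o{u}$. Then $x\in\o{u}\inters\closure{S}$, and as $x\neq 1$ this sublocale is non-trivial, so $\o{u}\inters\closure{S}\neq\void$; Proposition~\ref{closureintopen} then gives at once $\o{u}\inters S\neq\void$, which is the first claim.

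For the second claim I would use $x\notin\interior{S}$. By the contrapositive of Lemma~\ref{neighinterior}, since $\o{u}$ is an open sublocale containing $x$ it cannot satisfy $\o{u}\cont S$, so $\o{u}\not\cont S$. Since $S$ is complemented, $L\difference S=\sup(S)$ is its complement, and in particular $S\join(L\difference S)=L$. Because $\mathcal{S}(L)$ is a coframe, hence a distributive lattice, finite meets distribute over finite joins, so
\[ \o{u}=\o{u}\inters L=\o{u}\inters\big(S\join(L\difference S)\big)=(\o{u}\inters S)\join\big(\o{u}\inters(L\difference S)\big). \]
Were $\o{u}\inters(L\difference S)=\void$, this identity would force $\o{u}=\o{u}\inters S$, i.e.\ $\o{u}\cont S$, contradicting the previous line. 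Hence $\o{u}\inters(L\difference S)\neq\void$, as required.

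The only point that needs genuine care is the distributivity step in the second part: one must know that in $\mathcal{S}(L)$ finite meets distribute over finite joins. This is standard ($\mathcal{S}(L)$ is a coframe, hence a distributive lattice; see \cite{topwop}, \cite{stonespaces}), so it poses no real obstacle — the rest of the argument is purely formal, combining Proposition~\ref{closureintopen}, Lemma~\ref{neighinterior}, and the defining property of a complemented sublocale. If one preferred to sidestep the coframe fact, one could instead verify that $\o{u}\inters(L\difference S)=\void$ is equivalent to $L\difference S\cont\cl{u}$ and deduce $\o{u}\cont S$ from there, but the distributive-lattice formulation above is the most economical.
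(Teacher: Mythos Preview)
Your proof is correct and follows essentially the same route as the paper: Proposition~\ref{closureintopen} for $\o{u}\inters S\neq\void$, and the contrapositive of Lemma~\ref{neighinterior} for $\o{u}\inters(L\difference S)\neq\void$. The only difference is that you spell out the distributivity step (using that $\mathcal{S}(L)$ is a coframe) to pass from $\o{u}\not\cont S$ to $\o{u}\inters(L\difference S)\neq\void$, whereas the paper leaves this implicit.
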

\begin{proof}
Let $\o{u}$ be an open sublocale of $L$ such that $x\in \o{u}$. Since $x\in\bd{S}\cont \closure{S}$, by Proposition \ref{closureintopen}, we have $\o{u} \inters S \neq \void$. Moreover, since $x\notin \interior{S}$, by Lemma \ref{neighinterior}, $\o{u} \inters (L\difference S)\neq \void$.
\end{proof}

\begin{lemm}\label{boundarydiff}
Let $C\cont L$ be a complemented sublocale of $L$. Then $\bd{C}=\bd{L\difference C}$.
\end{lemm}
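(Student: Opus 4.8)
The plan is to reduce the statement to the interplay between complementation in the coframe $\mathcal{S}(L)$ and the open/closed duality, followed by a short symmetric computation. Write $C':=L\difference C$ for the supplement of $C$; by hypothesis this is a genuine complement, so $C\join C'=L$, $C\inters C'=\void$, and $C'$ is itself complemented with $L\difference C'=C$. Throughout I will use the standard facts that $\mathcal{S}(L)$ is a coframe (and hence a co-Heyting algebra), that $S\mapsto L\difference S$ is antitone, that every open sublocale $\o{u}$ is complemented with complement the closed sublocale $\cl{u}$ and vice versa, and that for a complemented sublocale $Y$ one has $X\difference Y=X\inters(L\difference Y)$ for every $X$.

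The first step is to establish the two identities
\[
\closure{C}=L\difference\interior{C'},\qquad \interior{C}=L\difference\closure{C'}.
\]
For the first, note that $L\difference\interior{C'}$ is closed and contains $C$, because $\interior{C'}\cont C'$ gives $C=L\difference C'\cont L\difference\interior{C'}$ by antitonicity; and it is the smallest such, since whenever $\cl{u}\contain C$ we get $\o{u}=L\difference\cl{u}\cont L\difference C=C'$, hence $\o{u}\cont\interior{C'}$ and so $\cl{u}=L\difference\o{u}\contain L\difference\interior{C'}$. Thus $L\difference\interior{C'}$ coincides with the smallest closed sublocale containing $C$, namely $\closure{C}$. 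The second identity is the same argument applied to $C'$ (using $L\difference C'=C$), or equivalently is obtained by complementing the first and using that $\closure{C}$ and $\interior{C'}$ are complemented.

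The second step is the computation. Using $\bd{S}=\closure{S}\difference\interior{S}$ together with the identities above,
\[
\bd{C}=\closure{C}\difference\interior{C}=\bigl(L\difference\interior{C'}\bigr)\difference\bigl(L\difference\closure{C'}\bigr).
\]
Since $L\difference\closure{C'}$ is open, hence complemented, the outer difference rewrites as $\bigl(L\difference\interior{C'}\bigr)\inters\bigl(L\difference(L\difference\closure{C'})\bigr)=\bigl(L\difference\interior{C'}\bigr)\inters\closure{C'}$, where we used that $\closure{C'}$ is complemented so that the double complement returns it. This expression is symmetric: rewriting it as $\closure{C'}\inters\bigl(L\difference\interior{C'}\bigr)=\closure{C'}\difference\interior{C'}$ (now using that $\interior{C'}$ is complemented) we recognise $\bd{C'}=\bd{L\difference C}$, which finishes the proof.

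The main obstacle is conceptual rather than computational: the identities $\closure{C}=L\difference\interior{C'}$ and $\interior{C}=L\difference\closure{C'}$ genuinely require $C$ to be complemented — for a general sublocale one only gets an inclusion — and the rewriting $X\difference Y=X\inters(L\difference Y)$, together with the cancellation $L\difference(L\difference Y)=Y$, is only valid when $Y$ is complemented. Hence the care needed is to apply these manipulations exclusively to the sublocales $\interior{C'}$, $\closure{C'}$ and their complements, which are open or closed and so always complemented. A minor point worth recording explicitly is that $\mathcal{S}(L)$ is a coframe and not a frame, so no meet-over-join distributivity is available; the argument above is arranged to invoke only antitonicity of complementation and the complementary pairing of open and closed sublocales.
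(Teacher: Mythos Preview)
Your proof is correct and takes a genuinely different route from the paper's. The paper argues by showing $\bd{C}\inters\bigl(L\difference\bd{L\difference C}\bigr)=\void$ via a case split on the decomposition $L\difference\bd{L\difference C}=(L\difference\closure{L\difference C})\join\interior{L\difference C}$, and then derives a contradiction in each case using the pointwise neighbourhood characterizations of interior and boundary (\lemx\ref{neighinterior} and \lemx\ref{neighboundary}); symmetry then gives the reverse inclusion. You instead isolate the two duality identities $\closure{C}=L\difference\interior{C'}$ and $\interior{C}=L\difference\closure{C'}$ (valid precisely because $C$ is complemented) and reduce the result to a two-line symmetric computation $\bd{C}=\closure{C}\inters(L\difference\interior{C})=(L\difference\interior{C'})\inters\closure{C'}=\bd{C'}$. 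Your approach is more algebraic and more in keeping with the pointfree spirit: it avoids the element-by-element neighbourhood lemmas entirely and makes transparent exactly where the complementedness hypothesis enters. The paper's approach, on the other hand, exercises Lemmas~\ref{neighinterior} and~\ref{neighboundary}, which are reused later in the paper, so its proof fits the surrounding development; your argument would stand independently of those lemmas. One small point worth making explicit in your write-up is that the identity $X\difference\o{a}=X\inters\cl{a}$ (which you invoke under the heading ``$X\difference Y=X\inters(L\difference Y)$ for $Y$ complemented'') does hold in $\mathcal{S}(L)$ for open $Y$, but this is a fact about the specific coframe $\mathcal{S}(L)$ rather than about arbitrary coframes; the paper uses the same identity implicitly (for instance in the proof of \prox\ref{3-4}), so you are on equal footing, but a one-line justification would tighten the argument.
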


\begin{proof}
To prove that $\bd{C}\cont\bd{L\difference C}$, it suffices to show that $\bd{C} \inters (L \difference \bd{L\difference C})=\void$. Suppose $\bd{C} \inters (L \difference \bd{L\difference C})\neq\void$. Since $\bd{C} \inters (L\difference \bd{L\difference C})=\bd{C} \inters ((L\difference (\closure{L \difference C})) \join \interior{L\difference C})= (\bd{C} \inters L \difference ( \closure{L \difference C})) \join (\bd{C} \inters \interior {L \difference C})$ it must be either $(\bd{C} \inters L \difference ( \closure{L \difference C})) \neq \void$ or $(\bd{C} \inters \interior {L \difference C})\neq \void$. But $(\bd{C} \inters L \difference ( \closure{L \difference C})) = \void$ because otherwise, by Lemma \ref{neighboundary} and Lemma \ref{neighinterior}, there would exist $x\in (\bd{C} \inters L \difference ( \closure{L \difference C}))$ and $U_x\cont L\difference (\closure{L\difference C})$ open such that $U_x \inters L \difference C \neq \void$ and this is a contradiction. Moreover, $(\bd{C} \inters \interior {L \difference C})= \void$ since otherwise, by Lemma \ref{neighboundary} and Lemma \ref{neighinterior}, there would exist $x\in (\bd{C} \inters \interior {L \difference C})$ and $U_x \cont L\difference C$ such that $U_x\inters C\neq \void$ and this is a contradiction. Hence it must be $\bd{C} \inters (L \difference \bd{L\difference C})=\void$, that implies $\bd{C}\cont\bd{L\difference C}$. The same argument for $L\difference C$ in place of $C$ shows that $\bd{L\difference C} \cont \bd{C}$.
\end{proof}

\begin{lemm}\label{bdinters}
    Let $A,B\cont L$ be sublocales of $L$. Then
    $$\bd{A\inters B}\cont \bd{A}\join \bd{B}$$
    $$\bd{A\join B}\cont \bd{A}\join \bd{B}$$

\end{lemm}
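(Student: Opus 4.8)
The plan is to derive both inclusions from a short computation inside the coframe $\mathcal{S}(L)$, by combining four elementary facts about closure and interior of binary meets and joins of sublocales with the standard order and distributivity properties of the difference $\difference$. This is legitimate because $\interior{S}$ is open, hence complemented, so that $\closure{S}\difference \interior{S}$ is unambiguous.

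First I would establish the four auxiliary facts $\closure{A\inters B}\cont \closure{A}\inters\closure{B}$, $\closure{A\join B}=\closure{A}\join\closure{B}$, $\interior{A}\inters\interior{B}\cont\interior{A\inters B}$ and $\interior{A}\join\interior{B}\cont\interior{A\join B}$. The closure statements follow from $\closure{S}=\;\uparrow\bigl(\bigwedge S\bigr)$ together with $\bigwedge(A\inters B)\geq\bigwedge A$ and $\bigwedge(A\join B)=\bigwedge A\meet\bigwedge B$, where the latter uses $\cl{a}\join\cl{b}=\cl{a\meet b}$. The interior statements hold because $\o{u}\inters\o{v}=\o{u\meet v}$ and arbitrary joins of open sublocales are again open: thus $\interior{A}\inters\interior{B}$ is an open sublocale contained in $A\inters B$ and $\interior{A}\join\interior{B}$ is an open sublocale contained in $A\join B$, so each is contained in the respective interior by maximality of $\interior{-}$.

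Next I would invoke that $\mathcal{S}(L)$ is a coframe \cite{topwop}: the difference $\difference$ is order-preserving in its first variable, order-reversing in its second, and satisfies $S\difference(T_1\meet T_2)=(S\difference T_1)\join(S\difference T_2)$ and $(S_1\join S_2)\difference T=(S_1\difference T)\join(S_2\difference T)$, all immediate from the adjunction $S\difference T\cont C\iff S\cont T\join C$ (the first identity also using the coframe law $(T_1\join C)\meet(T_2\join C)=(T_1\meet T_2)\join C$). The two inclusions then fall out of the chains
\begin{align*}
\bd{A\inters B} &= \closure{A\inters B}\difference\interior{A\inters B} \cont (\closure{A}\inters\closure{B})\difference(\interior{A}\inters\interior{B})\\
&= \bigl[(\closure{A}\inters\closure{B})\difference\interior{A}\bigr]\join\bigl[(\closure{A}\inters\closure{B})\difference\interior{B}\bigr]\\
&\cont (\closure{A}\difference\interior{A})\join(\closure{B}\difference\interior{B}) = \bd{A}\join\bd{B}
\end{align*}
and
\begin{align*}
\bd{A\join B} &= (\closure{A}\join\closure{B})\difference\interior{A\join B} \cont (\closure{A}\join\closure{B})\difference(\interior{A}\join\interior{B})\\
&= \bigl[\closure{A}\difference(\interior{A}\join\interior{B})\bigr]\join\bigl[\closure{B}\difference(\interior{A}\join\interior{B})\bigr]\\
&\cont (\closure{A}\difference\interior{A})\join(\closure{B}\difference\interior{B}) = \bd{A}\join\bd{B},
\end{align*}
where in the second chain the first equality uses $\closure{A\join B}=\closure{A}\join\closure{B}$, and every inclusion step is an instance of the monotonicity of $\difference$ applied to one of the four auxiliary facts.

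The only point requiring some care — and it is a minor one — is the difference calculus: one should confirm that $\closure{S}\difference\interior{S}$ behaves as expected (it does, since $\interior{S}$ is complemented) and that the law $S\difference(T_1\meet T_2)=(S\difference T_1)\join(S\difference T_2)$ genuinely holds in $\mathcal{S}(L)$, which is precisely the place where the coframe structure, rather than merely the complete-lattice structure, is used. Everything else is routine bookkeeping with $S\difference T\cont C\iff S\cont T\join C$ and the four auxiliary inclusions.
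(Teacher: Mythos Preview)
Your argument is correct. The four closure/interior inclusions you isolate are exactly those the paper uses, and the co-Heyting difference laws $S\difference(T_1\meet T_2)=(S\difference T_1)\join(S\difference T_2)$ and $(S_1\join S_2)\difference T=(S_1\difference T)\join(S_2\difference T)$ do hold in the coframe $\mathcal{S}(L)$, so both displayed chains go through as written.

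The underlying content is the same as the paper's, but the packaging differs. The paper works from the \emph{right-hand side}: it expands $\bd{A}\join\bd{B}=(\closure{A}\inters(L\difference\interior{A}))\join(\closure{B}\inters(L\difference\interior{B}))$ via the coframe law into the fourfold meet $(\closure{A}\join\closure{B})\inters(\closure{A}\join(L\difference\interior{B}))\inters((L\difference\interior{A})\join\closure{B})\inters((L\difference\interior{A})\join(L\difference\interior{B}))$ and then checks that $\bd{A\inters B}$ (respectively $\bd{A\join B}$) lies in each factor. You work from the \emph{left-hand side}, pushing $\closure{\,\cdot\,}\difference\interior{\,\cdot\,}$ through the meet/join using the difference calculus. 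Your route is a little more streamlined and makes the role of the coframe structure explicit; the paper's route is more hands-on and avoids naming the co-Heyting adjunction. Either way, the same four auxiliary inclusions do all the real work.
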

\begin{proof}
We notice that $\bd{A} \join \bd{B}=(\closure{A} \join \closure{B}) \inters (\closure{A} \join L\difference \interior{B}) \inters (L\difference \interior{A} \join \closure{B}) \inters (L\difference \interior{A}\join L\difference \interior{B})$. Moreover, $\bd{A \inters B} \cont \closure{A\inters B}$ and so $\bd{A \inters B} \cont \closure{A}$ and $\bd{A \inters B} \cont \closure{B}$. In addition to this, since $\interior{A} \inters \interior{B} \cont \interior{A \inters B}$, we have that $ L \difference \interior{A \inters B} \cont (L \difference \interior{A}) \join (L \difference \interior{B})$. Hence $\bd{A\inters B}$ is contained in $\bd{A} \join \bd{B}$.

Notice then that $\bd{A\join B}\cont \closure{A\join B}\cont \closure{A}\join \closure{B}$. Moreover, $\interior{A}\cont \interior{A\join B}$ and $\interior{B}\cont \interior{A\join B}$, whence $L\difference \interior{A\join B}\cont (L\difference \interior{A})\inters(L\difference \interior{B})$. We thus conclude that $\bd{A\join B}\cont \bd{A}\join \bd{B}$.
\end{proof}

\section{On Connectedness}

We now explore connected sublocales and understand their properties with respect to \emph{separated} sublocales, \emph{complemented} sublocales and their \emph{components}. We introduce the notion of \emph{strongly locally connected} locale, which in the context of locales differs from local connectedness.

\begin{defne}
    Let $P,Q\cont L$ be sublocales of $L$. $P$ and $Q$ are \dfn{separated} if $\closure{P}\inters Q=\void=P\inters \closure{Q}$.
\end{defne}

\begin{prop}\label{connectedchar}
	Let $S\cont L$ be a sublocale. The following are equivalent:
\begin{enumT}
\item $S$ is connected;
\item The only clopen sublocales of $S$ are $\void$ and $S$ itself;
\item If $S=P\join Q$ with $P$ and $Q$ separated, then $P=\void$ or $Q=\void$.
\end{enumT}
\end{prop}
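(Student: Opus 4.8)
My plan is to prove the cycle (i) $\Rightarrow$ (iii) $\Rightarrow$ (ii) $\Rightarrow$ (i), working throughout inside the locale $S$ and using only standard sublocale calculus: by Proposition \ref{closedofsub}, closed sublocales, open sublocales and closures relative to $S$ are the traces on $S$ of the corresponding notions in $L$; in any locale $M$ one has $\cl[M]{a}\join\cl[M]{b}=\cl[M]{a\meet b}$ and $\cl[M]{a}\inters\cl[M]{b}=\cl[M]{a\join b}$, the map $u\mapsto\cl[M]{u}$ is order-reversing and injective (so $\cl[M]{u}=\void$ iff $u=1_M$ and $\cl[M]{u}=M$ iff $u=0_M$), $\o[M]{u}$ is the complement of $\cl[M]{u}$ in $\mathcal{S}(M)$, and if $b$ is complemented in $M$ with complement $c$ then $\cl[M]{b}=\o[M]{c}$ is clopen. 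I will also use that a binary join of sublocales is a pairwise meet, $P\join Q=\set{p\meet q}{p\in P,\ q\in Q}$, which follows from the displayed formula for joins by splitting a subset of $P\cup Q$ into its parts in $P$ and in $Q$ and invoking closure under meets.

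For (i) $\Rightarrow$ (iii) I would start from $S=P\join Q$ with $P,Q$ separated and first prove that $P$ (hence, symmetrically, $Q$) is closed in $S$. By Proposition \ref{closedofsub}, $\closure[S]{P}=\closure{P}\inters S=\closure{P}\inters(P\join Q)$ with $\closure{P}=\;\uparrow 0_P$; given $x\in\closure{P}\inters(P\join Q)$, writing $x=p\meet q$ with $p\in P$, $q\in Q$ yields $q\geq p\meet q=x\geq 0_P$, so $q\in\closure{P}\inters Q=\void=\{1_L\}$, forcing $q=1_L$ and $x=p\in P$; thus $\closure[S]{P}\cont P$ and $P$ is closed in $S$. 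Then, writing $P=\cl[S]{p_0}$ and $Q=\cl[S]{q_0}$ with $p_0=0_P$, $q_0=0_Q$, injectivity turns $S=P\join Q=\cl[S]{p_0\meet q_0}$ into $p_0\meet q_0=0_S$, and turns $\void=P\inters Q=\cl[S]{p_0\join_S q_0}$ (note $P\inters Q\cont\closure{P}\inters Q=\void$) into $p_0\join_S q_0=1_S$. Connectedness of $S$ then forces $p_0=0_S$ or $q_0=0_S$, i.e. $P=S$ or $Q=S$, and in the first case $Q=Q\inters S=Q\inters P=\void$ (symmetrically in the second).

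For (iii) $\Rightarrow$ (ii) I would take a clopen sublocale $T$ of $S$; since $T$ is open it has a complement $T^{c}$ in $\mathcal{S}(S)$, which is closed, so (both $T$ and $T^{c}$ being closed) $\closure[S]{T}\inters T^{c}=T\inters T^{c}=\void=T\inters\closure[S]{T^{c}}$, showing that $T$ and $T^{c}$ are separated with $T\join T^{c}=S$; by (iii), $T=\void$ or $T^{c}=\void$, and in the latter case $T=T\join T^{c}=S$. For (ii) $\Rightarrow$ (i) I would argue contrapositively: if $S$ is not connected, pick $b,c\in S$ with $1_S=b\join_S c$, $b\meet c=0_S$, $b\neq 0_S\neq c$; then $b$ is complemented in $S$ with complement $c$, so $\cl[S]{b}=\o[S]{c}$ is clopen, and it is nontrivial since $\cl[S]{b}=S$ would give $b=0_S$ while $\cl[S]{b}=\void$ would give $b=1_S$, hence $c=b\meet c=0_S$ --- contradicting (ii).

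The hard part will be the single genuinely non-formal step above: that separated sublocales of $S$ whose join is $S$ are automatically closed (hence clopen) in $S$. There one must use separatedness in the sharp form ``the only element of $Q$ lying above $0_P$ is $1_L$'' together with the factorisation of an arbitrary element of $P\join Q$ as $p\meet q$; once $P$ and $Q$ are known to be closed they become the closed sublocales attached to a complementary pair of elements of the locale $S$, and the remainder is bookkeeping. One should also keep in mind that the joins occurring in the definition of connectedness --- in particular in $1_S=p_0\join_S q_0$ and $1_S=b\join_S c$ --- are joins computed in the locale $S$ rather than in $L$; this is harmless, since the meets involved coincide with those of $L$ and every displayed relation is an identity of sublocales.
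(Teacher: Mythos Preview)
Your proof is correct, but it follows a different route from the paper's. The paper proves (i)$\iff$(ii) and (i)$\iff$(iii) directly, and for the key implication (i)$\Rightarrow$(iii) it never shows that $P$ and $Q$ are closed in $S$: it simply observes that $S\cont\closure{P}\join\closure{Q}$ and $S\inters\closure{P}\inters\closure{Q}=\void$ (the latter using that intersection with a closed, hence complemented, sublocale distributes over the join $P\join Q$), and then applies the ``covering by closed sublocales'' form of connectedness to conclude $S\inters\closure{P}=\void$ or $S\inters\closure{Q}=\void$. For (ii)$\Rightarrow$(i) the paper likewise works externally, producing the nontrivial clopens as $S\inters U$, $S\inters V$ for open $U,V\cont L$.

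Your argument is more internal: you prove the pleasant lemma that separated $P,Q$ with $P\join Q=S$ are automatically closed in $S$ (via the explicit description $P\join Q=\set{p\meet q}{p\in P,\ q\in Q}$), and then invoke the element-level definition of connectedness of $S$. This costs a few extra lines but has two advantages: it sticks strictly to the paper's official definition of connectedness (the top element of $S$ is a connected element), whereas the paper's proof tacitly uses the equivalent ``$S\cont F_1\join F_2$'' formulation; and the closedness lemma is a useful fact in its own right. The paper's version, in turn, is shorter and avoids the join-factorisation computation, at the price of needing the distributivity step $(P\join Q)\inters\closure{P}=P$.
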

\begin{proof}
We prove (i) $\iff$ (ii) and (i) $\iff$ (iii).

(i) $\Rightarrow$ (ii). If  $T$ is a clopen sublocale of $S$ such that $T\neq S$ and $T \neq \void$, then $T$ and $S\difference T$ exhibit a separation of $S$.

(ii) $\Rightarrow$ (i). If $S$ is not connected then there exists open sublocales $U$ and $V$ such that $S \cont U \join V$, $S \inters U \inters V\neq \void$, $S \inters U \neq \void$ and $S \inters V \neq \void$. Then $S \inters U$ and $S\inters V$ are non-void clopens of $S$ that are not the entire $S$, contradicting (ii).

 (i) $\Rightarrow$ (iii). Let $S=P\join Q$ with $P$ and $Q$ separated. Then $S \cont \closure{P} \join \closure{Q}$ and $S \inters \closure{P} \inters \closure{Q}= \void$. Then, since $S$ is connected, it must be either $S \inters \closure{P} =\void$ or $S \inters \closure{Q} =\void$ and hence $P= \void$ or $Q=\void$.

(iii) $\Rightarrow$ (i). Let $S\cont F_1 \join F_2$ with $F_1$ and $F_2$ closed sublocales. Then $F_1\inters A$ and $F_2 \inters A$ are separated and $S=(F_1\inters A) \join (F_2\inters A)$. Then by (iii) we have that $F_1 \inters A= \void$ or $F_2 \inters A= \void$ and hence $S$ is connected.
\end{proof}

\begin{lemm}\label{lemmadiffsep}
    Let $L$ be a connected locale and $C\cont L$ a connected and complemented sublocale of $L$. If $L\difference C=P\join Q$ with $P$ and $Q$ separated, then both $C\join P$ and $C\join Q$ are connected.
\end{lemm}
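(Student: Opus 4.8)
\emph{Approach.} The plan is to transport the classical topological argument to the pointfree setting, detecting connectedness through Proposition~\ref{connectedchar}\,(iii) and using only two structural facts about the sublocale lattice $\mathcal{S}(L)$: that finite meets distribute over finite joins (legitimate since $\mathcal{S}(L)$ is a coframe, hence a distributive lattice — this is already used in the preceding results), and that $\closure{X\join Y}\cont\closure{X}\join\closure{Y}$, which holds because a join of closed sublocales is closed. Assume, towards a contradiction, that $C\join P$ is not connected. By Proposition~\ref{connectedchar}\,(iii) we may write $C\join P=A\join B$ with $A$ and $B$ separated and $A\neq\void\neq B$.

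\emph{Locating $C$.} Since $C\cont A\join B$, finite distributivity gives $C=(C\inters A)\join(C\inters B)$. The sublocales $C\inters A$ and $C\inters B$ are separated: from $\closure{C\inters A}\cont\closure{A}$ we get $\closure{C\inters A}\inters(C\inters B)\cont\closure{A}\inters B=\void$, and symmetrically. As $C$ is connected, Proposition~\ref{connectedchar}\,(iii) forces one of the two to be void, say $C\inters B=\void$, whence $C\cont A$. Applying finite distributivity once more, $B=B\inters(C\join P)=(B\inters C)\join(B\inters P)=B\inters P$, so $B\cont P$.

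\emph{A separated decomposition of $L$.} Using $C\join(L\difference C)=L$ and $L\difference C=P\join Q$, we obtain $L=C\join P\join Q=(A\join B)\join Q=(A\join Q)\join B$. I would then verify that $A\join Q$ and $B$ are separated. On one side, $\closure{A\join Q}\inters B\cont(\closure{A}\join\closure{Q})\inters B=(\closure{A}\inters B)\join(\closure{Q}\inters B)$, where $\closure{A}\inters B=\void$ since $A$ and $B$ are separated and $\closure{Q}\inters B\cont\closure{Q}\inters P=\void$ since $P$ and $Q$ are separated and $B\cont P$. On the other side, symmetrically, $(A\join Q)\inters\closure{B}\cont(A\inters\closure{B})\join(Q\inters\closure{B})=\void$, using $A\inters\closure{B}=\void$ and $Q\inters\closure{B}\cont Q\inters\closure{P}=\void$ (the defining condition of separatedness being symmetric in the two sublocales). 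Thus $L=(A\join Q)\join B$ is a separated decomposition of the connected locale $L$, so by Proposition~\ref{connectedchar}\,(iii) either $A\join Q=\void$ or $B=\void$; the first possibility forces $A=\void$, and either way we contradict $A\neq\void\neq B$. Hence $C\join P$ is connected, and the identical argument with the roles of $P$ and $Q$ interchanged shows $C\join Q$ is connected.

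\emph{Anticipated difficulty.} I do not expect a genuine obstacle: this is the pointfree shadow of the standard proof. The only points requiring care are the recurring symmetric bookkeeping with separatedness — transferring the conditions to $C\inters A$, $C\inters B$ and propagating them along the inclusion $B\cont P$ — and making sure that every distributivity invoked is the finite one and every closure manipulation uses only the inclusion $\closure{X\join Y}\cont\closure{X}\join\closure{Y}$, both of which are already used freely in the earlier sections.
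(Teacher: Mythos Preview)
Your proof is correct and follows essentially the same route as the paper's own argument: decompose $C\join P=A\join B$ with $A,B$ separated, use connectedness of $C$ to locate $C$ inside one piece (so the other is contained in $P$), and then observe that $L$ itself splits into two separated sublocales, contradicting connectedness of $L$. The only cosmetic differences are that you frame it as a proof by contradiction and spell out the distributivity and closure-of-join steps more carefully, whereas the paper leaves these implicit and argues directly via Proposition~\ref{connectedchar}\,(iii); the labels $A$ and $B$ are also interchanged relative to the paper's choice.
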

\begin{proof}
Let $A$ and $B$ be separated sublocales of $L$ such that $C \join P=A \join B$. Since $C\cont A\join B$ is connected, we have $C\inters A=\void$ or $C \inters B=\void$. Without loss of generality we suppose $C\inters A=\void$. Then $A \cont P$ and so $A$ and $Q$ are separated. Moreover $L=C \join P \join Q= A \join (B\join Q)$ and $A$ and $B\join Q$ are separated because $A$ is separated from both $B$ and $Q$. Since $L$ is connected this implies $A= \void$ or $B\join Q=\void$ and hence $B=\void$. This shows that $C\join P$ is connected.  The same argument can be used to show that $C\join Q$ is connected as well.
\end{proof}

\begin{prop}\label{connectedsub}
	Let $E\cont S$ be sublocales of $L$. Then $E$ is connected as a sublocale of $S$ precisely when it is connected as a sublocale of $L$.
\end{prop}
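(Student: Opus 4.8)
This statement is essentially a \emph{consistency check}: connectedness of $E$ should depend only on the frame $E$ itself and not on the ambient locale in which we choose to present it. My plan is to make this precise through the characterisation of connectedness by clopen sublocales. Before that, I would note that the statement is well posed, i.e.\ that $E$ is genuinely a sublocale of $S$: $E$ is closed under the meets of $S$ (which agree with the meets of $L$ since $S\cont L$ is a sublocale), and for $x\in S\cont L$ and $e\in E$ one has $x\to e\in E$ because $E\cont L$ is a sublocale, so $E$ is also closed under the Heyting operation of $S$.

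The key step is to observe, using Proposition \ref{closedofsub}, that the open and closed sublocales of $E$ are the same whether computed inside $S$ or inside $L$. For $u\in E$, the closed sublocale of $E$ at $u$ formed inside $S$ is, by Proposition \ref{closedofsub}, $\cl[S]{u}\inters E=\left(\cl{u}\inters S\right)\inters E=\cl{u}\inters E$, using $E\cont S$; and Proposition \ref{closedofsub} applied directly to $E\cont L$ gives exactly $\cl{u}\inters E$ for the closed sublocale of $E$ at $u$ formed inside $L$. The same computation for open sublocales shows these coincide too. Hence the collections of closed, of open, and therefore of clopen sublocales of $E$ do not depend on whether the ambient is $S$ or $L$.

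It then remains only to invoke part (ii) of Proposition \ref{connectedchar}: $E$ is connected as a sublocale of $S$ iff its only clopen sublocales computed in $S$ are $\void$ and $E$, which by the previous paragraph holds iff its only clopen sublocales computed in $L$ are $\void$ and $E$, i.e.\ iff $E$ is connected as a sublocale of $L$. I do not expect a genuine obstacle here; the only thing needing attention is the bookkeeping that every notion used --- being a sublocale, open and closed sublocales, and hence clopen sublocales --- is ambient-independent for $E$ when $E\cont S\cont L$, and this follows from Proposition \ref{closedofsub} together with the fact that a sublocale inherits the ambient meets and Heyting implication. One could equally run the argument through part (iii) of Proposition \ref{connectedchar}, additionally checking via Proposition \ref{closedofsub} that joins of sublocales of $E$ and the separatedness relation agree whether taken in $S$ or in $L$.
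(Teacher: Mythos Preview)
Your proof is correct. Both you and the paper rely on Proposition \ref{closedofsub} to identify closed (and open) sublocales across the two ambients, but you then pass through the clopen characterisation of Proposition \ref{connectedchar}(ii), whereas the paper works directly with the covering-by-closed-sublocales formulation: it assumes $E\cont \cl[S]{u}\join\cl[S]{v}$ with $E\inters\cl[S]{u}\inters\cl[S]{v}=\void$, rewrites this via $\cl[S]{u}=\cl{u}\inters S$ as a covering by closed sublocales of $L$ with empty triple intersection, applies connectedness of $E$ in $L$, and then intersects back down to $S$ (and symmetrically for the converse). Your route makes the conceptual point explicit---connectedness is intrinsic to the frame $E$---while the paper's route stays closer to the working definition used in the subsequent proofs and avoids the extra appeal to Proposition \ref{connectedchar}. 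Either argument is short and adequate.
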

\begin{proof}
We  first prove that if $E$ is connected in $L$ then it is connected in $S$. Recall that, by Proposition \ref{closedofsub}, the closed sublocales of $S$ are of the form $\cl[S]{u}= \cl{u} \inters S$. Suppose now $E \cont \cl[S]{u} \join \cl[S]{v}$ with $E \inters \cl[S]{u} \inters \cl[S]{v}= \void$. Then $E\cont \cl{u} \inters \cl{v}$ and we have $E \inters \cl[]{u} \inters \cl[]{v}=E \inters \cl[S]{u} \inters \cl[S]{v}= \void$. Since $E$ is connected in $L$ this implies either $E\inters \cl{u}=\void$ or  $E\inters \cl{v}=\void$ and hence either  $E\inters \cl[S]{u}=\void$ or  $E\inters \cl[S]{v}=\void$.

We now prove that if $E$ is connected in $S$ it is connected in $L$ as well. Let $E\cont \cl{u} \join \cl{v}$ with $E\inters \cl{u} \inters \cl{v}=\void$. Then $E\cont \cl[S]{u} \join \cl[S]{v}$ and $E \inters \cl[S]{u} \inters \cl[S]{v}=E \inters \cl[]{u} \inters \cl[]{v}= \void$. Since $E$ is connected in $S$ this implies either $E\inters \cl[S]{u}=\void$ or  $E\inters \cl[S]{v}=\void$ and hence, since $E \inters S=E$, either $E\inters \cl[]{u}=\void$ or  $E\inters \cl[]{v}=\void$.

\end{proof}

In the next result, we present an intuitive characterisation of closed sublocales in terms of closed sublocales.
\begin{lemm}\label{closedsubconnected}
	For a closed sublocale $\cl{u}\cont L$ the following are equivalent:
	\begin{enumT}
		\item $\cl{u}$ is connected;
		\item Whenever $\cl{u}=\cl{a}\join \cl{b}$ for some $a,b\in L$ with $\cl{a}\inters\cl{b}=\void$, we have that $\cl{a}=\void$ or $\cl{b}=\void$.
	\end{enumT}
\end{lemm}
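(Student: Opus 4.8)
The plan is to reduce everything to the clopen-sublocale characterisation of connectedness (Proposition \ref{connectedchar}, the equivalence (i)\,$\iff$\,(ii)) together with the description of the closed sublocales of a sublocale from Proposition \ref{closedofsub}. The underlying observation is that, inside $\cl{u}$, a decomposition $\cl{u}=\cl{a}\join\cl{b}$ with $\cl{a}\inters\cl{b}=\void$ is precisely a decomposition of $\cl{u}$ into two complementary closed — hence clopen — sublocales of $\cl{u}$, and conversely every clopen sublocale of $\cl{u}$ arises in this way. Throughout I will use the standard identity $\cl{w}\inters\cl{u}=\cl{w\join u}$ and the fact that joins and meets of sublocales contained in $\cl{u}$ are computed the same way in $\calS(L)$ and in $\calS(\cl{u})$.

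For (i)\,$\Rightarrow$\,(ii) I would take a decomposition $\cl{u}=\cl{a}\join\cl{b}$ with $\cl{a}\inters\cl{b}=\void$. Since $\cl{a},\cl{b}\cont\cl{u}$, Proposition \ref{closedofsub} gives $\cl{a}=\cl{a}\inters\cl{u}$ and $\cl{b}=\cl{b}\inters\cl{u}$, so both are closed sublocales of $\cl{u}$; being complementary in $\calS(L)$, they are complementary in $\calS(\cl{u})$, so $\cl{a}$ is a clopen sublocale of $\cl{u}$. Connectedness of $\cl{u}$ then forces $\cl{a}=\void$ or $\cl{a}=\cl{u}$, and in the latter case $\cl{b}=\cl{b}\inters\cl{u}=\cl{b}\inters\cl{a}=\void$.

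For (ii)\,$\Rightarrow$\,(i) I would show, via Proposition \ref{connectedchar}, that the only clopen sublocales of $\cl{u}$ are $\void$ and $\cl{u}$. Let $T$ be clopen in $\cl{u}$. Being closed in $\cl{u}$, $T=\cl{w}\inters\cl{u}$ for some $w\in L$ by Proposition \ref{closedofsub}, and using $\cl{w}\inters\cl{u}=\cl{w\join u}$ we may write $T=\cl{a}$ with $a=w\join u\geq u$. The complement of $T$ in $\cl{u}$ is again clopen in $\cl{u}$, hence, by the same reasoning, equals $\cl{b}$ for some $b\geq u$. Since $T$ and its complement satisfy $\cl{a}\join\cl{b}=\cl{u}$ and $\cl{a}\inters\cl{b}=\void$, hypothesis (ii) yields $\cl{a}=\void$ or $\cl{b}=\void$, i.e.\ $T=\void$ or $T=\cl{u}$.

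The content of the lemma is almost entirely bookkeeping: the only point that needs a moment's care is checking that the complement in $\cl{u}$ of a clopen sublocale of $\cl{u}$ is again a closed sublocale of $\cl{u}$ (immediate, since a complement of an open sublocale is closed and conversely) and hence of the form $\cl{b}$ with $b\geq u$, together with the compatibility of the lattice operations on sublocales contained in $\cl{u}$ with those of $\calS(L)$ — both of which follow directly from the definitions and Proposition \ref{closedofsub}. I expect no serious obstacle.
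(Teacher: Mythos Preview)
Your argument is correct. The only difference from the paper is stylistic: for (ii)\,$\Rightarrow$\,(i) the paper works directly with the closed-cover definition of connectedness---given $\cl{u}\cont\cl{a}\join\cl{b}$ with $\cl{u}\inters\cl{a}\inters\cl{b}=\void$, it intersects both pieces with $\cl{u}$ to obtain a decomposition $\cl{u}=(\cl{a}\inters\cl{u})\join(\cl{b}\inters\cl{u})$ to which (ii) applies---whereas you route through the clopen characterisation of Proposition~\ref{connectedchar}. Both are one-line reductions and neither buys anything over the other.
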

\begin{proof}
(i) $\implic$ (ii). Trivial using the definition of connected sublocale.

(ii) $\implic$ (i). Let $\cl{u}\cont \cl{a} \join \cl{b}$ with $\cl{u}\inters \cl{a} \inters \cl{b} =\void$. Consider then the disjoint closed sublocales $\cl{a} \inters \cl{u}$ and $\cl{b} \inters \cl{u}$. We have $\cl{u}=(\cl{a} \inters \cl{u})\join (\cl{b} \inters \cl{u})$ and $(\cl{a} \inters \cl{u})\inters (\cl{b} \inters \cl{u})= \void$. Hence, by (ii), we conclude that either  and hence $\cl{u}$ is connected.
(ii) $\implic$ (i). Let $\cl{u}\cont \cl{a} \join \cl{b}$ with $\cl{u}\inters \cl{a} \inters \cl{b} =\void$. Consider then the disjoint closed sublocales $\cl{a} \inters \cl{u}$ and $\cl{b} \inters \cl{u}$. We have $\cl{u}=(\cl{a} \inters \cl{u})\join (\cl{b} \inters \cl{u})$ and $(\cl{a} \inters \cl{u})\inters (\cl{b} \inters \cl{u})= \void$. Hence, by (ii), we conclude that either $\cl{a} \inters \cl{u}=\void$ or $\cl{b} \inters \cl{u}=\void$ and so $\cl{u}$ is connected.
\end{proof}

\begin{defne}[\cite{Li96}]\label{defcomponent}
	A \dfn{component} of a sublocale $T\cont L$ is a non-void connected sublocale $D\cont T$ that is maximal among connected sublocales of $T$ in the following sense: given any connected sublocale $S\cont T$ such that $S\inters D\neq \void$, we have $S\cont D$.
\end{defne}

\begin{lemm}[\cite{Li96}]\label{lemmajoinopens}
    Let $L$ be a locale and $\fami{\o{u_i}}{i\in I}$ a family of open sublocales of $L$. Then $\bigjoin \fami{\o{u_i}}{i\in I}$ exists (and is equal to $\o{\bigjoin \fami{u_i}{i}}$). Moreover for every sublocale $S$
    $$S\inters \bigjoin \fami{\o{u_i}}{i\in I}=\bigjoin \fami{S\inters \o{u_i}}{i\in I}.$$
\end{lemm}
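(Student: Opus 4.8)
The plan is to prove the identity $\bigjoin_{i\in I}\o{u_i}=\o{\bigjoin_{i\in I}u_i}$ first, and then to obtain the distributivity law by repeating essentially the same argument ``relative to $S$''. Throughout I would write $v:=\bigjoin_{i\in I}u_i$ and use the defining description $\o{u}=\set{x\in L}{u\to x=x}$; the basic observation underlying everything is that $u\to(u\to x)=(u\meet u)\to x=u\to x$, so that $u\to x\in\o{u}$ for \emph{every} $x\in L$.

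First I would check that $\o{v}$ is an upper bound of $\fami{\o{u_i}}{i\in I}$ in $\mathcal{S}(L)$: since Heyting implication is antitone in its first variable, $u_i\leq v$ together with $u_i\to x=x$ forces $x\leq v\to x\leq u_i\to x=x$, i.e.\ $\o{u_i}\cont\o{v}$. To see that $\o{v}$ is the \emph{least} upper bound, let $T$ be any sublocale with $\o{u_i}\cont T$ for all $i$ and pick $x\in\o{v}$. The key computation is
$$x=v\to x=\Bigl(\bigjoin_{i\in I}u_i\Bigr)\to x=\bigwedge_{i\in I}(u_i\to x),$$
using the standard frame identity $\bigl(\bigjoin_i a_i\bigr)\to b=\bigwedge_i(a_i\to b)$. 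Since each $u_i\to x$ lies in $\o{u_i}\cont T$ and $T$ is closed under arbitrary meets, we get $x\in T$; hence $\o{v}\cont T$. Taking $T=\bigjoin_i\o{u_i}$ yields the claimed equality, which in particular shows that this join (which of course already exists, $\mathcal{S}(L)$ being a complete lattice) is again an open sublocale.

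For the distributivity law, fix a sublocale $S$; the inclusion $\bigjoin_i(S\inters\o{u_i})\cont S\inters\bigjoin_i\o{u_i}$ is formal. For the converse, take $x\in S\inters\o{v}$ and run the same computation: $x=\bigwedge_{i}(u_i\to x)$ with $u_i\to x\in\o{u_i}$. The extra ingredient now is the defining axiom of a sublocale --- $a\in L$ and $s\in S$ imply $a\to s\in S$ --- which, applied with $s=x\in S$, gives $u_i\to x\in S$ as well. Thus each $u_i\to x$ belongs to $S\inters\o{u_i}$, and writing $x$ as the meet of the family $\fami{u_i\to x}{i\in I}\cont\bigcup_i(S\inters\o{u_i})$ places $x$ in $\bigjoin_i(S\inters\o{u_i})$, as required. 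I do not expect a genuine obstacle here: the whole proof rests on the single observation that $u_i\to x$ is simultaneously a member of $\o{u_i}$ and --- when $x\in S$ --- of $S$, the only background fact being the Heyting/frame identity $\bigl(\bigjoin_i u_i\bigr)\to x=\bigwedge_i(u_i\to x)$, itself an immediate consequence of the adjunction $(-)\meet x\dashv\bigl(x\to(-)\bigr)$ and frame distributivity.
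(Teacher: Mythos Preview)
Your proof is correct. The paper does not actually prove this lemma: it is stated with a citation to \cite{Li96} and no argument is given, so there is no ``paper's own proof'' to compare against. Your argument is the standard one and goes through cleanly --- the two ingredients you isolate, namely $u_i\to x\in\o{u_i}$ for every $x$ and the frame identity $\bigl(\bigjoin_i u_i\bigr)\to x=\bigwedge_i(u_i\to x)$, are exactly what is needed, and invoking the sublocale axiom $a\to s\in S$ to push the same computation through relative to $S$ is the right move for the distributivity half.
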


\begin{lemm}\label{lemmajoinint}
    Let $L$ be a locale and $\fami{S_i}{i\in I}$ a family of sublocales of $L$ such that $\bigjoin \fami{S_i}{i\in I}$ exists and is a sublocale of $L$. Then the following two properties hold.
    
    If $D$ is a complemented non-void sublocale of $\bigjoin \fami{S_i}{i\in I}$, then there exists some $i_0\in I$ such that $D\inters S_{i_0}\neq \void$.

    If $S_i$ is open for every $i\in I$, then for every sublocale $D$ such that $D\inters \bigjoin \fami{S_i}{i\in I}\neq \void$ there exists $i_0\in I$ such that $D\inters S_{i_0}\neq \void$.
\end{lemm}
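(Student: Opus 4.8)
The plan is to prove both statements contrapositively: in each case I assume that $D\inters S_i=\void$ for every $i\in I$ and deduce the corresponding emptiness. The second statement is then almost immediate from Lemma~\ref{lemmajoinopens}: writing $S_i=\o{u_i}$, that lemma gives $D\inters\bigjoin\fami{\o{u_i}}{i\in I}=\bigjoin\fami{D\inters\o{u_i}}{i\in I}$, so if every $D\inters\o{u_i}$ is void then the right-hand side is a join of copies of $\void$ and hence $\void$; thus $D$ does not meet $\bigjoin\fami{\o{u_i}}{i\in I}$, which is the contrapositive of what we want.

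For the first statement I would use that $\calS(L)$, and likewise $\calS(S)$ for $S=\bigjoin\fami{S_i}{i\in I}$, is a coframe, hence in particular a distributive lattice in which a binary meet distributes over a binary join. Let $D\st$ denote the complement of $D$. I would first record that the supplement of a complemented sublocale is an actual complement, i.e. that $D\join D\st=S$ together with $D\inters D\st=\void$: the latter is the definition of complementedness, and the former follows from the coframe identity $D\join\bigwedge_\alpha T_\alpha=\bigwedge_\alpha(D\join T_\alpha)$ applied to the family of all sublocales $T_\alpha$ with $T_\alpha\join D=S$. Then, for each $i$, since $S_i\cont S$ and using distributivity,
$$S_i=S_i\inters S=S_i\inters(D\join D\st)=(S_i\inters D)\join(S_i\inters D\st).$$
Under the assumption $S_i\inters D=\void$ for all $i$ this gives $S_i\cont D\st$ for all $i$, hence $S=\bigjoin\fami{S_i}{i\in I}\cont D\st$; but then $D\cont S\cont D\st$ forces $D=D\inters D\st=\void$, contradicting that $D$ is non-void. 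So $D\inters S_{i_0}\neq\void$ for some $i_0$.

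I do not anticipate a genuine difficulty: once the standard fact that the lattice of sublocales is a distributive coframe is invoked, both arguments are short. The one spot that warrants a careful sentence is to say in which lattice ``complemented'' is meant --- inside $\calS(S)$ or inside $\calS(L)$ --- but the displayed computation is insensitive to this, since $S$ is itself a sublocale and the join of sublocales contained in $S$ is the same whether computed in $S$ or in $L$.
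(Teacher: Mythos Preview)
Your proof is correct and is essentially the paper's argument, phrased contrapositively. For the second part you and the paper both invoke \lemx\ref{lemmajoinopens} directly; for the first part the paper takes the complement $T:=\bigjoin_i S_i\difference D$, notes $T\subsetneq \bigjoin_i S_i$, and uses the universal property of the join to find $S_{i_0}\not\subseteq T$, concluding $D\inters S_{i_0}\neq\void$ --- which is exactly your distributivity step $S_{i}=(S_i\inters D)\join(S_i\inters D\st)$ read in the other direction. Your version is slightly more explicit about why the supplement really is a complement (via the coframe law), a point the paper leaves implicit.
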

\begin{proof}
    If $D$ is complemented in $\bigjoin \fami{S_i}{i\in I}$, we can consider the sublocale $T:=\bigjoin \fami{S_i}{i\in I}\difference D$. Then $T$ is strictly contained in $\bigjoin \fami{S_i}{i\in I}$ because $D$ is non-void. So by definition of join, it cannot happen that $T$ contains $S_i$ for every $i\in I$. Thus there exists $i_0\in I$ such that $S_{i_0}$ is not contained in $T$. Whence $D\inters S_{i_0}\neq \void$.

    If $S_i$ is open for every $i\in I$, then by \lemx\ref{lemmajoinopens}
    $$D\inters \bigjoin \fami{S_i}{i\in I} = \bigjoin \fami{D\inters S_i}{i\in I}.$$
    So if the left hand side is non-void, we can deduce that there exists $i_0\in I$ such that $D\inters S_{i_0}\neq \void$, otherwise the right hand side would be void, being a join of void sublocales.
\end{proof}

By Lemma \ref{lemmajoinint}, we are able to conclude the next result with respect to components when in a locally connected locale.

\begin{coroll}\label{corolljoinint}
    Let $L$ be locally connected and $\o{u}\cont L$. If $S\cont L$ is a sublocale such that $S\inters \o{u}\neq \void$ then there exists a component $C$ of $\o{u}$ such that $S\inters C\neq \void$.
\end{coroll}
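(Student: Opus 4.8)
The plan is to use local connectedness to realise $\o{u}$ as a join of connected open sublocales, and then to feed this decomposition into the second part of \lemx\ref{lemmajoinint}.

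Since $L$ is locally connected, write $u=\bigjoin\fami{u_i}{i\in I}$ with each $u_i$ a connected element of $L$. By \lemx\ref{lemmajoinopens} this gives $\o{u}=\bigjoin\fami{\o{u_i}}{i\in I}$, and the join is a sublocale. Each $\o{u_i}$ is moreover a \emph{connected} open sublocale: it is standard that $\o{a}$ is a connected locale whenever $a$ is a connected element --- indeed $\o{a}$ is, as a frame, isomorphic to $\downarrow a$ via $w\mapsto w\meet a$, and the disjoint binary decompositions of the top element of $\downarrow a$ are precisely the decompositions $a=b\join c$ with $b\meet c=0_L$ --- and connectedness of a sublocale does not depend on the ambient locale by \prox\ref{connectedsub}. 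So $\fami{\o{u_i}}{i\in I}$ is a family of connected open sublocales with join $\o{u}$.

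Next I would apply the second statement of \lemx\ref{lemmajoinint} to the family $\fami{\o{u_i}}{i\in I}$ and to the sublocale $S$: since $S\inters\o{u}\neq\void$, there is some $i_0\in I$ with $S\inters\o{u_{i_0}}\neq\void$, so $\o{u_{i_0}}$ is a non-void connected sublocale of $\o{u}$. It follows that $\o{u_{i_0}}$ is contained in a component $C$ of $\o{u}$, and then $S\inters C\supseteq S\inters\o{u_{i_0}}\neq\void$, which is exactly the assertion. To see that such a $C$ exists, run Zorn's lemma on the connected sublocales of $\o{u}$ containing $\o{u_{i_0}}$, ordered by inclusion: a chain of these has connected join, so there is a maximal one $C$; and any connected $S'\cont\o{u}$ with $S'\inters C\neq\void$ has connected join $S'\join C\supseteq\o{u_{i_0}}$, whence $S'\join C=C$ by maximality and $S'\cont C$, so that $C$ is a component in the sense of \defx\ref{defcomponent}. (One may also simply cite from \cite{Li96} that every non-void connected sublocale lies in a unique component; alternatively, one could prove that in a locally connected locale the components of an open sublocale are themselves open and join to it, and then apply the second part of \lemx\ref{lemmajoinint} directly to the family of those components.)

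The one delicate point is this last step --- producing the component that absorbs $\o{u_{i_0}}$ --- which rests on the auxiliary fact that a join of connected sublocales, each meeting one fixed non-void connected sublocale, is connected; in the pointfree setting this needs a short argument, reducing via \prox\ref{connectedchar}\,(iii) to the binary case $A\join B$ with $A\inters B\neq\void$ and then iterating transfinitely along the chain. Everything else is a direct application of \lemx\ref{lemmajoinopens}, \lemx\ref{lemmajoinint} and \prox\ref{connectedsub}.
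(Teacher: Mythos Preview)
Your argument is correct, and in fact you explicitly flag the paper's route as an alternative. The paper proceeds more directly: it uses (from \cite{Li96}) that in a locally connected locale the components of an open sublocale are themselves open and that $\o{u}$ is the join of its components, and then applies the second part of \lemx\ref{lemmajoinint} straight to that family. Your primary route instead decomposes $u$ into arbitrary connected elements, applies \lemx\ref{lemmajoinint} to find a connected open piece $\o{u_{i_0}}$ meeting $S$, and then manufactures a component containing it via Zorn. This is sound---the chain step and the maximality step both follow immediately from \prox\ref{joinconnected}, so you need not re-derive the ``delicate point'' from \prox\ref{connectedchar}\,(iii) by hand. The trade-off: the paper's proof is a two-line appeal to known structural facts about components in locally connected locales, while yours is more self-contained (it does not presuppose that components of $\o{u}$ are open or that they cover $\o{u}$) at the cost of the extra Zorn argument.
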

\begin{proof}
Since $L$ is locally connected, $\o{u}=\bigjoin \set{C\cont \o{u}}{C \text{ component}}$. Every component of $\o{u}$ is open and so, by the second part of Lemma \ref{lemmajoinint}, $S$ must intersect at least one of the components of $\o{u}$.
\end{proof}

\begin{prop}\label{joinconnected}
	Let $\fami{S_i}{i\in I}$ be a family of connected sublocales of $L$ such that $\bigjoin \fami{S_i}{i\in I}$ exists and is a sublocale of $L$. If there exists $i_0\in I$ such that $\closure{S_i}\inters S_{i_0}\neq \void$ for every $i\in I$ (or $S_i\inters \closure{S_{i_0}}\neq \void$ for every $i\in I$), then $\bigjoin \fami{S_i}{i\in I}$ is connected.
\end{prop}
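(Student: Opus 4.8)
The plan is to verify criterion (ii) of Proposition \ref{connectedchar} for $S:=\bigjoin_{i\in I}S_i$, namely that $S$ has no clopen sublocale other than $\void$ and $S$. I will treat the first case of the hypothesis in detail and indicate at the end the (symmetric) change needed for the second. Observe first that the hypothesis forces every $S_i$, and $S_{i_0}$ in particular, to be non-void, since $\closure{\void}=\void$. Now suppose, for a contradiction, that $Q\cont S$ is clopen with $\void\neq Q\neq S$, and put $P:=S\difference Q$. As the complement of a clopen sublocale, $P$ is itself clopen in $S$, with $\void\neq P\neq S$, $P\inters Q=\void$ and $P\join Q=S$; in particular both $P$ and $Q$ are closed in $S$.

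The first step is to show that each $S_i$ is contained in $P$ or in $Q$. Since $S_i\cont S$, distributing meets over finite joins of sublocales gives $S_i=S_i\inters(P\join Q)=(S_i\inters P)\join(S_i\inters Q)$, and these two sublocales are separated: using that $P$ is closed in $S$ and $S_i\inters Q\cont S$, Proposition \ref{closedofsub} yields
$$\closure{S_i\inters P}\inters(S_i\inters Q)\cont\closure{P}\inters S\inters Q=\closure[S]{P}\inters Q=P\inters Q=\void,$$
and symmetrically with the roles of $P$ and $Q$ exchanged. As $S_i$ is connected, Proposition \ref{connectedchar}(iii) forces $S_i\inters P=\void$ or $S_i\inters Q=\void$, that is, $S_i\cont Q$ or $S_i\cont P$. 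Applying this with $i=i_0$ and, if necessary, swapping the names of $P$ and $Q$, I may assume from now on that $S_{i_0}\cont P$.

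The second step is to rule out $S_j\cont Q$ for every index $j$. Indeed, if $S_j\cont Q$ then $\closure{S_j}\cont\closure{Q}$, and since $Q$ is closed in $S$ while $S_{i_0}\cont S$ and $S_{i_0}\cont P$, Proposition \ref{closedofsub} gives
$$\closure{S_j}\inters S_{i_0}\cont\closure{Q}\inters S\inters S_{i_0}=\closure[S]{Q}\inters S_{i_0}=Q\inters S_{i_0}\cont Q\inters P=\void,$$
contradicting the hypothesis $\closure{S_j}\inters S_{i_0}\neq\void$. Hence $S_i\cont P$ for all $i$, and since $P$ is a sublocale — so closed under the arbitrary meets defining $\bigjoin_i S_i$ — we get $S=\bigjoin_i S_i\cont P$, forcing $P=S$, a contradiction. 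Thus $S$ has no non-trivial clopen sublocale, so $S$ is connected by Proposition \ref{connectedchar}. For the alternative hypothesis, only the second step changes: if $S_j\cont Q$, then from $S_{i_0}\cont P$ and the closedness of $P$ in $S$ one obtains $S_j\inters\closure{S_{i_0}}\cont S_j\inters\closure{P}\cont Q\inters S\inters\closure{P}=Q\inters\closure[S]{P}=Q\inters P=\void$, again a contradiction.

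The step I expect to require the most care is simply keeping track of where closures are being taken: the hypothesis speaks of closures computed in $L$, whereas the decomposition $S=P\join Q$ is internal to $S$, so the argument leans repeatedly on Proposition \ref{closedofsub} (the identity $\closure[S]{T}=\closure{T}\inters S$) to pass between the two. Beyond this bookkeeping — and the routine distributivity of meets over finite joins of sublocales — the proof is the pointfree rendering of the classical fact that a union of connected sets, each of which meets a fixed connected one, is connected.
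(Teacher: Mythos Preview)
Your proof is correct. Both your argument and the paper's follow the classical template---each $S_i$ must lie entirely on one side of a splitting, and the intersection hypothesis forces them all to the side of $S_{i_0}$---but you run this internally to $S$ via the clopen criterion \Rom{2} of \prox\ref{connectedchar}, whereas the paper works directly with an external closed cover $\cl{u}\join\cl{v}\supseteq S$ in $L$. The paper's route is shorter: from $S_i\inters\cl{v}=\void$ one gets $S_i\cont\cl{u}$ and hence $\closure{S_i}\cont\cl{u}$ immediately, and the contradiction $\closure{S_i}\inters S_{i_0}\cont\cl{v}\inters\cl{u}\inters S=\void$ falls out without any translation of closures. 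Your approach trades this directness for the extra bookkeeping you flag yourself---repeated appeals to \prox\ref{closedofsub} to move between $\closure{\,\cdot\,}$ and $\closure[S]{\,\cdot\,}$---but it has the virtue of making the role of $S$ as the ambient locale explicit, and your final step ($\bigjoin_i S_i\cont P$ because $P$ is a sublocale) is a nice way to close the argument.
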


\begin{proof}
Let $\cl{u}$ and $\cl{v}$ be non-void closed sublocales of $L$ such that $\bigjoin\fami{S_i}{i\in I} \cont \cl{u} \join \cl{v}$ with $\bigjoin\fami{S_i}{i\in I} \inters \cl{u} \inters \cl{v} = \void$. Given $i\in I$, since the sublocale $S_i$ is connected, it must be either $S_i \inters \cl{u} = \void$ or $S_i \inters \cl{v} = \void$. This implies that for every $i\in I$ either $\closure{S_i} \cont \cl{u}$ or $\closure{S_i} \cont \cl{v}$. Without loss of generality we can suppose that, in particular, $S_{i_0} \cont \cl{u}$. Then, given any $i\in I$, since $\closure{S_i} \inters S_{i_0} \neq \void$, we have $S_i \cont \cl{u}$ and so $\bigjoin\fami{S_i}{i\in I} \cont \cl{u}$. Hence we conclude $\bigjoin\fami{S_i}{i\in I} \inters \cl{v}= \void$ and so $\bigjoin \fami{S_i}{i\in I}$ is connected.

We can use the same argument for the case $S_i\inters \closure{S_{i_0}}\neq \void$ for every $i\in I$.
\end{proof}

We now present the notion of a strongly locally connected locale.

\begin{defne}\label{defstronglloccon}
    A locale $L$ is \dfn{strongly locally connected} if for every $\o{u}\cont L$ and for every $x\in \o{u}$ there exists $\o{v}\cont \o{u}$ open connected sublocale with $x\in \o{v}$.
\end{defne}

\begin{prop}\label{strlocconisloccon}
    Every strongly locally connected locale is locally connected.

    Moreover, if $L$ is strongly locally connected and $\o{u}\cont L$, then every $x\in \o{u}$ belongs to a component of $\o{u}$.
\end{prop}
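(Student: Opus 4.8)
The plan is to establish the two assertions separately, freely using that $a\mapsto\o{a}$ is an order embedding of $L$ into $\calS(L)$ preserving finite meets and, by \lemx\ref{lemmajoinopens}, arbitrary joins, with $\o{0}=\void$. First I would record the auxiliary fact that \emph{if $\o{v}$ is connected as a sublocale, then $v$ is a connected element of $L$}. Indeed, if $v=b\join c$ with $b\meet c=0$, then $\o{b}\join\o{c}=\o{v}$ and $\o{b}\inters\o{c}=\o{b\meet c}=\void$; since $b,c\leq v$, \prox\ref{closedofsub} exhibits $\o{b}=\o{b}\inters\o{v}$ and $\o{c}=\o{c}\inters\o{v}$ as open sublocales of $\o{v}$, which are disjoint with join $\o{v}$, so each is the complement of the other in $\o{v}$ and hence $\o{b}$ is clopen in $\o{v}$. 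By \prox\ref{connectedchar}, either $\o{b}=\void$ or $\o{b}=\o{v}$, i.e. $b=0$ or $b=v$; in the second case $c\leq v=b$ forces $c=b\meet c=0$. Either way $v$ is connected.

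For the first assertion, fix $a\in L$ and apply strong local connectedness to the open sublocale $\o{a}$: for each $x\in\o{a}$ choose an open connected sublocale $\o{v_x}\cont\o{a}$ with $x\in\o{v_x}$. Since each $\o{v_x}\cont\o{a}$ and every $x\in\o{a}$ lies in $\o{v_x}$, we obtain $\o{a}=\bigjoin_{x\in\o{a}}\o{v_x}$, which by \lemx\ref{lemmajoinopens} equals $\o{\bigjoin_{x}v_x}$; hence $\bigjoin_{x}v_x=a$. By the auxiliary fact each $v_x$ is a connected element, so $a$ is a join of connected elements. As $a$ was arbitrary, $L$ is locally connected.

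For the second assertion, fix $\o{u}\cont L$ and $x\in\o{u}$; we may assume $x\neq1$ (if $x=1$ the claim amounts to $\o{u}$ having a component, which the construction below provides whenever $\o{u}\neq\void$, as $1$ lies in every sublocale). By strong local connectedness pick an open connected sublocale $\o{v}\cont\o{u}$ with $x\in\o{v}$, and set $\mathcal{C}=\set{S\cont\o{u}}{S\text{ connected and }x\in S}$, so that $\o{v}\in\mathcal{C}$. Let $C=\bigjoin_{S\in\mathcal{C}}S$; this is a sublocale of $\o{u}$ with $x\in C$. For every $S\in\mathcal{C}$ we have $x\in S\inters\o{v}$ and $x\neq1$, hence $\closure{S}\inters\o{v}\contain S\inters\o{v}\neq\void$; so $C$ is connected by \prox\ref{joinconnected}, taking $\o{v}$ as the distinguished member of the family.

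It remains to show that $C$ is maximal, which I expect to be the crux. Let $S\cont\o{u}$ be connected with $S\inters C\neq\void$. Applying \prox\ref{joinconnected} to the two connected sublocales $S,C$ with $C$ as distinguished member (here $\closure{S}\inters C\contain S\inters C\neq\void$ and $\closure{C}\inters C=C\neq\void$, since $x\in C$ and $x\neq1$), we get that $S\join C$ is connected; being a sublocale of $\o{u}$ that contains $x$, it lies in $\mathcal{C}$, whence $S\cont S\join C\cont C$. Therefore $C$ is a component of $\o{u}$ with $x\in C$. The only recurring subtlety is the trivial sublocale $\void=\{1\}$: the assumption $x\neq1$ is precisely what keeps $S\inters\o{v}$, $S\inters C$ and $C$ genuinely non-void where \prox\ref{joinconnected} demands it, with the degenerate case $x=1$ handled by the parenthetical remark above.
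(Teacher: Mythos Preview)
Your proof of the first assertion follows the same line as the paper's: cover $\o{a}$ by the open connected $\o{v_x}$, deduce $a=\bigjoin_x v_x$, and observe each $v_x$ is a connected element. You add an explicit verification of the last step (connectedness of $\o{v}$ as a sublocale implies connectedness of $v$ as an element), which the paper simply asserts; your argument via \prox\ref{connectedchar} is correct.

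For the second assertion the paper actually gives no argument at all, so your proof supplies something genuinely missing. Your construction of the component as $C=\bigjoin\set{S\cont\o{u}}{S\text{ connected},\ x\in S}$ is the standard one and works: the hypothesis $x\neq 1$ together with $x\in S\inters\o{v}$ guarantees the non-void intersections needed for \prox\ref{joinconnected}, both to show $C$ is connected and to show $S\join C$ is connected in the maximality step. The handling of $x=1$ via the parenthetical is adequate (and this edge case is never used in the paper). One small simplification: in the maximality step you could also appeal directly to the definition of $C$ once $S\join C$ is seen to lie in $\mathcal{C}$, as you do; there is no need to invoke \prox\ref{joinconnected} twice if one instead notes that $S\join\o{v}$ is already connected and contains $x$, but your route is fine.
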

\begin{proof}
Let $L$ be a strongly locally connected locale and let $u\in L$. For every $x\in \o{u}$, take $\o{v^x}$ an open connected sublocale such that $x\in \o{v^x}$ and $\o{v^x} \cont  \o{u}$. Then we have $\o{u}=\bigjoin \set{\o{v^x}}{ x\in \o{u}}= \bigcup \set{\o{v^x}}{x\in \o{u}}$. Indeed, for every $x\in \o{u}$ we have $x\in \o{v^x}\cont \o{u}$, and every $\o{v^x}$ is contained in $\o{u}$. Then $u=\bigjoin \set{v^x}{x\in \o{u}}$. And every $v^x$ is a connected element, since $\o{v^x}$ is connected.
\end{proof}

\begin{rem}
    For topological spaces, strongly local connectedness is equivalent to local connectedness. But for locales one of the two properties is stronger than the other one.
\end{rem}

\begin{prop}\label{3-4}
	Let $L$ be a connected and locally connected locale. Let then $\o{v}$ be a component of an open sublocale $\o{u}\cont L$ such that $\o{v}\neq L$. Then $\bd{\o{v}}\neq \void$ and $\bd{\o{v}}\cont L\difference \o{u}=\cl{u}$.
\end{prop}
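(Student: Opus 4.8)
The plan is to prove that $\o{v}$ is a \emph{clopen} sublocale of $\o{u}$ — open because $L$ is locally connected, closed because $\o{v}$ is a maximal connected sublocale of $\o{u}$ — and then to read off both assertions about $\bd{\o{v}}$ from this, using crucially that $\o{v}$ is in fact open in all of $L$ and that $L$ is connected. So the first thing I would check is that $\o{v}$ is open in $L$; this is the fact already relied upon in the proof of Corollary \ref{corolljoinint}, and it can be seen directly: writing $u=\bigjoin_j w_j$ with each $w_j$ a connected element, Lemma \ref{lemmajoinopens} gives $\o{u}=\bigjoin_j\o{w_j}$ with each $\o{w_j}$ a connected open sublocale contained in $\o{u}$ (connected since $w_j$ is); any $\o{w_j}$ meeting $\o{v}$ lies in $\o{v}$ by maximality of the component (Definition \ref{defcomponent}), so by Lemma \ref{lemmajoinopens} again $\o{v}=\o{v}\inters\o{u}=\bigjoin\set{\o{w_j}}{\o{w_j}\cont\o{v}}$ is a join of open sublocales. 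In particular $\interior{\o{v}}=\o{v}$.

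Next I would show that $\o{v}$ is closed in $\o{u}$. By Proposition \ref{connectedsub}, $\o{v}$ is connected as a sublocale of $\o{u}$, so its closure taken in $\o{u}$ — which by Proposition \ref{closedofsub} equals $\closure{\o{v}}\inters\o{u}$ — is connected in $\o{u}$, since the closure of a connected sublocale is connected; by Proposition \ref{connectedsub} again it is then connected in $L$. Thus $\closure{\o{v}}\inters\o{u}$ is a connected sublocale of $\o{u}$ that contains, hence meets, the non-void sublocale $\o{v}$, so maximality of the component forces $\closure{\o{v}}\inters\o{u}\cont\o{v}$, whence $\closure{\o{v}}\inters\o{u}=\o{v}$.

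From here the conclusions come quickly. Since $\interior{\o{v}}=\o{v}$ and $L\difference\o{v}=\cl{v}$, we get $\bd{\o{v}}=\closure{\o{v}}\difference\interior{\o{v}}=\closure{\o{v}}\inters\cl{v}$, so $\bd{\o{v}}\inters\o{u}=(\closure{\o{v}}\inters\o{u})\inters\cl{v}=\o{v}\inters\cl{v}=\void$; since any sublocale disjoint from the open sublocale $\o{u}$ is contained in its complement $\cl{u}=L\difference\o{u}$ — the implication used inside the proof of Proposition \ref{closureintopen} — this yields $\bd{\o{v}}\cont L\difference\o{u}=\cl{u}$. For non-emptiness I would argue by contradiction: if $\bd{\o{v}}=\void$ then $\closure{\o{v}}\cont\interior{\o{v}}$, and since $\interior{\o{v}}\cont\o{v}\cont\closure{\o{v}}$ always holds this forces $\closure{\o{v}}=\o{v}$, so $\o{v}$ is clopen in $L$; but $\o{v}$ is non-void (it is a component) and $L$ is connected, so Proposition \ref{connectedchar} gives $\o{v}=L$, contradicting $\o{v}\neq L$.

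The step I expect to need the most care is the bookkeeping of closures and ambient locales: the maximality property of the component is a statement inside $\o{u}$, whereas the connectedness hypothesis is a statement about $L$, so one must pass deliberately between the closure of $\o{v}$ computed in $\o{u}$ and the one computed in $L$ (via Propositions \ref{closedofsub} and \ref{connectedsub}), and likewise the passage from $\bd{\o{v}}\inters\o{u}=\void$ to $\bd{\o{v}}\cont\cl{u}$ has to be carried out pointfreely rather than by producing a separating point. The openness of the component is a genuine ingredient too, but it is already taken for granted in the earlier results and poses no real difficulty.
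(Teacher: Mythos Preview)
Your proof is correct and follows essentially the same approach as the paper: both arguments amount to showing that the component $\o{v}$ is open in $L$ and closed in $\o{u}$, and then reading off both assertions about $\bd{\o{v}}$ from this together with connectedness of $L$. The paper's version is terser---it computes directly at the element level via $\bd{\o{v}}=\cl{v^{\ast}\join v}=\cl{v^{\ast}}\inters\cl{v}$ and silently uses $\cl{v^{\ast}}\inters\o{u}=\o{v}$ (i.e.\ closedness of the component in $\o{u}$)---whereas you carefully justify that step.
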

\begin{proof}
We notice that $\bd{\o{v}}= \cl{v\st \join v}$. Since $\cl{v\st \join v}=\cl{v} \inters \cl{v\st}$, we have 
$\cl{v \join v \st} \inters \o{u} \subseteq \cl{v} \inters \o{v} = \void$ 
and hence $\cl{v\st \join v} \subseteq \cl{u}$.
 Moreover, since $L$ is connected and $\o{v}$ is non-void and not equal to $L$, we have that $\o{v} \neq \cl{v\st}$ that implies $\cl{v \st \join v}= \cl{v\st} \inters \cl{v} \neq \void$.
\end{proof}

\begin{coroll}\label{3-5}
	Let $L$ be a connected and locally connected locale, and let $\cl{u}\cont L$ be a non-void closed sublocale. Every component $\o{v}$ of $L\difference \cl{u}=\o{u}$ is such that $\closure{\o{v}}\inters \cl{u}\neq \void$.
\end{coroll}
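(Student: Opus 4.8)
The plan is to derive this almost immediately from Proposition \ref{3-4}. The first thing I would check is that the hypothesis $\o{v}\neq L$ of Proposition \ref{3-4} is automatically satisfied in the present situation: since $\cl{u}$ is non-void and $\o{u}=L\difference\cl{u}$ is its complement, $\o{u}$ cannot be all of $L$, for otherwise $\cl{u}=L\difference L=\void$. As a component $\o{v}$ of $\o{u}$ is by definition a sublocale of $\o{u}$, we get $\o{v}\cont\o{u}\subsetneq L$, hence $\o{v}\neq L$. (Here I also use, exactly as in the proof of Corollary \ref{corolljoinint}, that in a locally connected locale the components of an open sublocale are themselves open; this is what allows us to speak of $\o{v}$ and of $\bd{\o{v}}$ in the sense of Proposition \ref{3-4}.)

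Next I would invoke Proposition \ref{3-4}, which yields both $\bd{\o{v}}\neq\void$ and $\bd{\o{v}}\cont L\difference\o{u}=\cl{u}$. Finally, since $\o{v}$ is open we have $\interior{\o{v}}=\o{v}$, so $\bd{\o{v}}=\closure{\o{v}}\difference\o{v}\cont\closure{\o{v}}$ (the containment $\bd{S}\cont\closure{S}$ being the one already used in Lemma \ref{neighboundary}). Combining $\bd{\o{v}}\cont\closure{\o{v}}$ with $\bd{\o{v}}\cont\cl{u}$ gives $\bd{\o{v}}\cont\closure{\o{v}}\inters\cl{u}$, and since $\bd{\o{v}}\neq\void$ we conclude $\closure{\o{v}}\inters\cl{u}\neq\void$, which is the desired statement.

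I do not anticipate any real obstacle: all the substance is already packaged in Proposition \ref{3-4}, and the only points that need a word of justification are that $\o{v}\neq L$ (handled above via the non-voidness of $\cl{u}$) and that $\o{v}$ is open, so that $\bd{\o{v}}\cont\closure{\o{v}}$ and the boundary can be meaningfully intersected with $\cl{u}$.
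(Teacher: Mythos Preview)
Your proof is correct and follows exactly the same approach as the paper: invoke Proposition~\ref{3-4} to get $\bd{\o{v}}\neq\void$ and $\bd{\o{v}}\cont\cl{u}$, then combine with $\bd{\o{v}}\cont\closure{\o{v}}$. You are simply more explicit than the paper in verifying the hypothesis $\o{v}\neq L$ of Proposition~\ref{3-4}, which the paper leaves tacit.
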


\begin{proof}
  We notice that $\bd{\o{v}} \cont \closure{\o{v}}$ and, by Proposition \ref{3-4}, $\bd{\o{v}}\neq \void$ and $\bd{\o{v}}\cont \cl{u}$.
\end{proof}

The following theorem will be very useful in the next sections.

\begin{teor}\label{3-10}
	Let $L$ be a connected and locally connected locale, and let $A=\cl{x}$ and $B=\cl{y}$ be non-void disjoint closed sublocales of $L$. Consider then $N\cont L$ a connected sublocale such that $N\inters A\neq \void \neq N\inters B$ (for example, $N=L$). Then there exists a component $D$ of $L\difference (A\join B)$ such that $N\inters D\neq \void$, $\closure{D}\inters A\neq \void$ and $\closure{D}\inters B\neq \void$.
\end{teor}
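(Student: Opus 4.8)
The plan is to argue by contradiction, producing a separation of $N$ under the assumption that no component of $L\difference(A\join B)$ meeting $N$ has closure meeting both $A$ and $B$. Writing $A=\cl{x}$, $B=\cl{y}$, I would first record the standard translations: $A\inters B=\void$ gives $x\join y=1$, and since $\cl{x}\join\cl{y}=\cl{x\meet y}$ the sublocale $U:=L\difference(A\join B)=\o{x\meet y}$ is a \emph{proper} open sublocale of $L$ (proper because $A,B\neq\void$). By local connectedness $U$ is the join of its components, each component $D$ is open (as in the proof of Corollary~\ref{corolljoinint}) and satisfies $D\neq L$ (since $D\cont U\neq L$), so Proposition~\ref{3-4} gives $\void\neq\bd{D}\cont A\join B$. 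A short Heyting computation shows that any sublocale $T\cont A\join B=\cl{x\meet y}$ with $T\inters A=\void$ lies in $B$: for $t\in T$ one has $x\meet y\leq t$, hence $y\to t\geq x$, so $y\to t\in T\inters\cl{x}=\void$, i.e. $y\leq t$. Applied to $\bd{D}$ this yields that $\closure{D}$ meets $A$ or meets $B$ for every component $D$ of $U$. The same computation applied to $N$ gives $N\inters(A\join B)=(N\inters A)\join(N\inters B)$; since $\o{x\meet y}$ and $\cl{x\meet y}$ are complementary in $\mathcal{S}(L)$ (so $N\inters\o{x\meet y}$ and $N\inters\cl{x\meet y}$ are complementary in $N$ by Proposition~\ref{closedofsub}), together with Lemma~\ref{lemmajoinopens} this produces $N=(N\inters A)\join(N\inters B)\join\bigjoin_{D}(N\inters D)$, the last join ranging over all components of $U$.

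Now assume for contradiction that every component $D$ of $U$ with $N\inters D\neq\void$ satisfies $\closure{D}\inters A=\void$ or $\closure{D}\inters B=\void$. Split these components into the family $\shC^A$ of those whose closure meets $A$ (hence not $B$) and the family $\shC^B$ of those whose closure meets $B$ (hence not $A$); by the above this exhausts the components of $U$ meeting $N$. Set
$$N_A:=(N\inters A)\join\bigjoin_{D\in\shC^A}(N\inters D),\qquad N_B:=(N\inters B)\join\bigjoin_{D\in\shC^B}(N\inters D).$$
The decomposition of $N$ above gives $N=N_A\join N_B$; if I can show $N_A$ and $N_B$ are separated, then $N$ connected and Proposition~\ref{connectedchar} force $N_A=\void$ or $N_B=\void$, contradicting $\void\neq N\inters A\cont N_A$ and $\void\neq N\inters B\cont N_B$.

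To separate $N_A$ from $N_B$ I would show $\closure{N_A}\inters N_B=\void$ (the reverse being symmetric under $A\leftrightarrow B$, $x\leftrightarrow y$). Writing each $D\in\shC^A$ as $\o{d}$ one has $D\cont\cl{0_D}=\closure{D}$ with $0_D=d\st$, so $K:=\cl{x}\join\bigjoin_{D\in\shC^A}\closure{D}=\cl{x}\join\bigjoin_{D\in\shC^A}\cl{0_D}=\cl{\,x\meet c\,}$ with $c:=\bigwedge_{D\in\shC^A}0_D$ (using $\bigjoin_i\cl{a_i}=\cl{\bigwedge_i a_i}$). Then $K$ is a closed sublocale containing $N_A$, so $\closure{N_A}\cont K$. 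Moreover $x\join y=1$ and, for $D\in\shC^A$, $\closure{D}\inters B=\void$ forces $0_D\join y=1$; hence each of the closed sublocales $\cl{x}$, $\cl{0_D}$ is disjoint from $\cl{y}$ and therefore contained in $\o{y}$, so $K\cont\o{y}$ and thus $\closure{N_A}\cont\o{y}$. Using that meeting with an open sublocale distributes over arbitrary joins of sublocales, and that $\o{y}\inters(N\inters B)=\void$ while $\o{y}\inters(N\inters D)=N\inters D$ for $D\cont U\cont\o{y}$, I get $\o{y}\inters N_B=\bigjoin_{D\in\shC^B}(N\inters D)\cont\bigjoin_{D\in\shC^B}D=\o{w}$ with $w:=\bigjoin_{D\in\shC^B}d$ (by Lemma~\ref{lemmajoinopens}). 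Hence $\closure{N_A}\inters N_B=\closure{N_A}\inters\o{y}\inters N_B\cont K\inters\o{w}$, and $K\inters\o{w}=\void$: every $D=\o{d}\in\shC^B$ has $d\leq x$ (since $D\cont U\cont\o{x}$) and $d\meet d'=0$ for each $D'=\o{d'}\in\shC^A$ (distinct components are disjoint), so $d\leq 0_{D'}$ for all such $D'$ and thus $d\leq x\meet c$; therefore $w\leq x\meet c$, whence $\o{w}\cont\o{x\meet c}$ and $\o{x\meet c}\inters\cl{x\meet c}=\void$. This completes the separation.

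The principal obstacle is the failure of distributivity of $\inters$ over $\bigjoin$ in $\mathcal{S}(L)$: one cannot deduce $\closure{N_A}\inters N_B=\void$ merely from $\closure{N_A}$ meeting each generator of $N_B$ trivially. The argument circumvents this by trapping $\closure{N_A}$ inside a single closed sublocale $K$ — which in turn forces it inside the single open sublocale $\o{y}$ — and simultaneously trapping the surviving part of $N_B$ inside a single open sublocale $\o{w}$, so that the needed disjointness $K\inters\o{w}=\void$ becomes the bare inequality $w\leq x\meet c$. The two facts used beyond the excerpt are routine and standard: that meeting with an open sublocale preserves arbitrary joins of sublocales, and the identity $\bigjoin_i\cl{a_i}=\cl{\bigwedge_i a_i}$ for closed sublocales.
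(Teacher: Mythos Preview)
Your proof is correct, and takes a genuinely different route from the paper's. Both arguments begin with the same decomposition of $N$ according to whether components of $L\difference(A\join B)$ have closures meeting $A$ or meeting $B$, but the executions diverge from there. The paper argues \emph{directly}: it defines $S_A,S_B$ (without assuming anything), uses connectedness of $N$ to force $S_A\inters\closure{S_B}\neq\void$ (or the symmetric case), and then invokes \lemx\ref{lemmajoinint} to extract a specific element lying in a particular component; a short case analysis (depending on whether the element lands in $A$ or in some component) then pins down the desired $D$. Your argument is by \emph{contradiction}: under the hypothesis that no component works, your $N_A$ and $N_B$ become an actual separation of $N$. The key technical device---trapping $\closure{N_A}$ inside a single closed sublocale $K=\cl{x\meet c}$, which then lands inside $\o{y}$, and simultaneously trapping $\o{y}\inters N_B$ inside a single open $\o{w}$ with $w\leq x\meet c$---is an elegant way to bypass the failure of $\inters$ to distribute over infinite joins in $\mathcal{S}(L)$, and it lets you avoid both \lemx\ref{lemmajoinint} and the case analysis entirely. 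The paper's route is more constructive (it exhibits the component), while yours is shorter and purely order-theoretic; the two facts you flag as ``routine and standard'' (that $\o{a}\inters(-)$ preserves arbitrary joins in $\mathcal{S}(L)$, and that $\bigjoin_i\cl{a_i}=\cl{\bigwedge_i a_i}$) are indeed both true and easy to verify.
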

\begin{proof}
Consider the following sublocales:

\noindent$S_A= \left(\bigjoin \set{\o{u} \cont L \difference (A \join B)}{ \o{u} \text{ component}, \closure{\o{u}}\inters A \neq \void} \join A\right) \inters N$

\noindent$S_B= \left(\bigjoin \set{\o{u} \cont L \difference (A \join B)}{\o{u} \text{ component}, \closure{\o{u}}\inters B \neq \void}{} \join B\right) \inters N.$

By Corollary \ref{3-5}, every component $\o{u} \cont L\difference (A \join B)$ is such that $\closure{\o{u}}$ intersects either $A$ or $B$ and so $S_A \join S_B= N$. Since $N \cont \closure{S_A} \join \closure{S_B}$ is connected and it intersects both $\closure{S_A}$ and $\closure{S_B}$, we have $N \inters \closure{S_A} \inters \closure{S_B}\neq \void$. But $N \inters \closure{S_A} \inters \closure{S_B}=( \closure{S_A} \inters S_B) \join ( S_A \inters \closure{S_B})$ and hence either $\closure{S_A} \inters S_B\neq \void$ or $S_A \inters \closure{S_B}\neq \void$. Without loss of generality, we assume $S_A \inters \closure{S_B}\neq \void$. By Lemma \ref{lemmajoinint}, there exists $a\in S_A \inters \closure{S_B}$, $a\neq 1$, with either $a\in A \inters N$ or $a\in \o{u} \inters N$ with $\o{u} \cont L \difference (A \join B)$ component such that $\closure{\o{u}} \inters A \neq \void$.  

If $a\in \o{u} \inters N$ then $\closure{S_B} \inters \o{u} \neq \void$ and hence by Proposition \ref{closureintopen} $\o{u} \inters S_B \neq \void$. Since $\o{u} \inters B = \void$, by Lemma \ref{lemmajoinint} there exists $b\in \o{v} \inters N$ with $\o{v} \cont L \difference (A \join B)$ component such that $\closure{\o{v}} \inters B \neq \void$. Since $b\in \o{u} \inters \o{v}$, we have $\o{u} = \o{v}$ and so we have found a component of $L\difference (A \join B)$ that intersects $N$ and whose closure intersects both $A$ and $B$. 

If $a\in A\inters N$, then $a \in A \inters \closure{S_B}$ and so $A \inters \closure{S_B}$ is non void and complemented. Hence, by Lemma \ref{lemmajoinint}, there exists $a'\in A \inters \closure{S_B}$, $a' \neq 1$ such that $a'\in \o{w}$ with $\o{w} \cont L\difference B$ component.  This implies $\o{w} \inters S_B \neq \void$. But $\o{w} \inters S_B$ is complemented and so, thanks again to Lemma \ref{lemmajoinint}, there exists $b\in \o{v} \inters N$ with $\o{v} \cont L \difference (A \join B)$ component such that $\closure{\o{v}} \inters B \neq \void$. Since $\o{v} \inters \o{w} \neq \void$ and $\o{w}$ is connected, $\o{v}$ is not closed in $L \difference B$. But we have that 
$\closure[L\difference B]{\o{v}}= (\closure{\o{v}} \inters (L\difference(A \join B)) \join (\closure{\o{v}} \inters A)= \o{v} \join (\closure{\o{v}} \inters A)$
and so $\closure{\o{v}} \inters A\neq \void$. Hence $\o{v}$ is the desired component of $L\difference (A \join B)$.
\end{proof}

\begin{prop} \label{componentbd}
    Let $\o{u}\cont L$ be an open sublocale and let $C$ be a component of $\o{u}$. Then $\bd{C}\cont \bd{\o{u}}$. 
\end{prop}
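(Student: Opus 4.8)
I would prove this under the hypothesis, as in the preceding results, that $L$ is locally connected; this is genuinely needed, since e.g.\ for $L=\shO X$ with $X$ a convergent sequence together with its limit and $\o{u}=L$, the one-point sublocales are components of $\o{u}$ with non-void boundary, whereas $\bd{\o{u}}=\bd{L}=\void$. Granting local connectedness, the key preliminary fact is that $\o{u}$ is the join of its components and that each such component is open: this is the content of (the proof of) Corollary \ref{corolljoinint}, and can also be seen directly by writing $u=\bigjoin_i v_i$ with each $v_i$ a connected element, so that Lemma \ref{lemmajoinopens} gives $\o{u}=\bigjoin_i\o{v_i}$ with each $\o{v_i}$ open and connected; maximality of $C$ then forces every $\o{v_i}$ meeting $C$ to lie in $C$, whence $C=C\inters\o{u}=\bigjoin_i(C\inters\o{v_i})$ is a join of open sublocales.

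The heart of the proof would be the identity $\closure{C}\inters\o{u}=C$. Writing $\o{u}=\bigjoin\set{D}{D\text{ a component of }\o{u}}$ as a join of open sublocales and applying Lemma \ref{lemmajoinopens}, I obtain $\closure{C}\inters\o{u}=\bigjoin_D\bigl(\closure{C}\inters D\bigr)$. For $D=C$ this summand equals $\closure{C}\inters C=C$, and I claim every other summand is void. Suppose $\closure{C}\inters D\neq\void$ for some component $D\neq C$. Applying Proposition \ref{joinconnected} to the two-element family $\{C,D\}$ of connected sublocales, with $D$ in the role of $S_{i_0}$ (indeed $\closure{C}\inters D\neq\void$ by assumption and $\closure{D}\inters D=D\neq\void$), we get that $C\join D$ is connected. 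Since $C\join D\cont\o{u}$ and $(C\join D)\inters C=C\neq\void$, maximality of $C$ gives $C\join D\cont C$, hence $D\cont C$; then, $C$ being connected with $C\inters D=D\neq\void$, maximality of $D$ gives $C\cont D$, so $D=C$, a contradiction. Hence $\closure{C}\inters\o{u}=C$.

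To finish, I would use that $C$, $\interior{C}$, and $\o{u}$ are open, hence complemented, so that $A\difference B=A\inters(L\difference B)$ behaves as expected. Since $C$ is open, $\interior{C}=C$, whence $\bd{C}=\closure{C}\difference C=\closure{C}\inters(L\difference C)$; intersecting with $\o{u}$ and invoking the identity above, $\bd{C}\inters\o{u}=(\closure{C}\inters\o{u})\inters(L\difference C)=C\inters(L\difference C)=\void$, that is, $\bd{C}\cont L\difference\o{u}$. On the other hand $\bd{C}\cont\closure{C}\cont\closure{\o{u}}$ because $C\cont\o{u}$. Combining the two inclusions, $\bd{C}\cont\closure{\o{u}}\inters(L\difference\o{u})=\closure{\o{u}}\difference\o{u}=\bd{\o{u}}$, the last equality because $\interior{\o{u}}=\o{u}$. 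I expect the second paragraph (the identity $\closure{C}\inters\o{u}=C$) to be the main obstacle; a subtle point is that Proposition \ref{joinconnected} must be applied to the pair $C,D$ rather than to $\closure{C},D$, since $\closure{C}$ need not lie inside $\o{u}$ and hence cannot be tested against the maximality of $C$.
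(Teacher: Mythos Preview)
Your proof is correct, and your remark that local connectedness must be assumed is apt: the paper's statement omits it, but the paper's own proof relies on it implicitly through Proposition~\ref{3-4}. The paper's argument is much shorter than yours: it simply cites Proposition~\ref{3-4} for the inclusion $\bd{C}\cont L\difference\o{u}$ and combines this with the trivial $\bd{C}\cont\closure{C}\cont\closure{\o{u}}$. Your second paragraph is in effect a self-contained re-derivation of that inclusion via the identity $\closure{C}\inters\o{u}=C$. Your route to this identity---decomposing $\o{u}$ into open components and invoking Proposition~\ref{joinconnected} to rule out $\closure{C}\inters D\neq\void$ for $D\neq C$---is valid but heavier than necessary: since closures of connected sublocales are connected, $\closure[\o{u}]{C}=\closure{C}\inters\o{u}$ (Proposition~\ref{closedofsub}) is a connected sublocale of $\o{u}$ meeting $C$, hence contained in $C$ by maximality, so equal to $C$. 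That one-line observation is essentially the computation hidden inside the paper's proof of Proposition~\ref{3-4}, and it also shows that the join-of-components machinery (and with it Lemma~\ref{lemmajoinopens} and Proposition~\ref{joinconnected}) is not needed for this step; local connectedness enters only to ensure $C$ is open, so that $\interior{C}=C$ in your final paragraph.
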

\begin{proof}
Since $C \cont \o{u}$, we have $\bd{C} \cont \closure{\o{u}}$. But $\bd{C} \cont L\difference \o{u}$ by Proposition \ref{3-4} and so $\bd{C}\cont \bd{\o{u}}$.
\end{proof}

\section{Unicoherent Locales}
In this section, we extend the notion of unicoherence to the setting of locales. We prove that the locale of opens of a topological space is unicoherent precisely when the topological space is, immediately obtaining many examples of this theory. We also introduce \emph{open unicoherence}, which for topological spaces is an equivalent characterization of unicoherence of notable relevance. We investigate some results on unicoherent locales and prove some lemmas that will be useful in the following section.

\begin{defne} \label{defcontinuum}
	A \dfn{continuum} in a locale $L$ is a non-void closed connected sublocale $\cl{u}\cont L$.

    A \dfn{region} in a locale $L$ is a non-void open connected sublocale $\o{u}\cont L$.
\end{defne}

\begin{prop} \label{continuatoploc}
	Let $X$ be a topological space. The closed sublocale $\cl{X\difference F}$ of $\mathcal{O}(X)$ associated to a closed subspace $F$ of $X$ is a continuum precisely when $F$ is a continuum (in the classical topological sense).
\end{prop}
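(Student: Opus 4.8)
The plan is to translate, clause by clause, the definition of a continuum on the closed sublocale $\cl{X\difference F}$ into the corresponding statement about the subspace $F$, exploiting the coframe structure of the closed sublocales of $\mathcal{O}(X)$. I will freely use the following standard facts (see e.g. \cite{stonespaces,topwop}): for $a,b\in\mathcal{O}(X)$ one has $\cl{a}\join\cl{b}=\cl{a\inters b}$ and $\cl{a}\inters\cl{b}=\cl{a\cup b}$, the map $p\mapsto\cl{p}$ is order-reversing and injective, and $\cl{a}=\void$ precisely when $a=X$. Granting these, the non-emptiness and closedness clauses are immediate: $\cl{X\difference F}$ is non-void iff $X\difference F\neq X$, i.e.\ iff $F\neq\void$; and $\cl{X\difference F}$ is a closed sublocale by construction while $F$ is closed by hypothesis. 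So the whole statement reduces to showing that $\cl{X\difference F}$ is connected as a sublocale if and only if $F$ is connected as a subspace.

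For that equivalence I would apply Lemma~\ref{closedsubconnected}: $\cl{X\difference F}$ is connected iff it cannot be written as $\cl{a}\join\cl{b}$ with $\cl{a}\inters\cl{b}=\void$ and both $\cl{a},\cl{b}$ non-void. Now a decomposition $\cl{X\difference F}=\cl{a}\join\cl{b}$ with $\cl{a}\inters\cl{b}=\void$ is, via the facts above, exactly a pair $a,b\in\mathcal{O}(X)$ with $a\inters b=X\difference F$ and $a\cup b=X$; equivalently, writing $C_1=X\difference a$ and $C_2=X\difference b$, a pair of closed subsets of $X$ with $C_1\cup C_2=F$ and $C_1\inters C_2=\void$. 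Since $F$ is closed in $X$, these $C_i$ are exactly the closed subsets of the subspace $F$, so such pairs $(C_1,C_2)$ are precisely the partitions of $F$ into two (possibly empty) closed subspaces, and under the translation ``$\cl{a}=\void$ or $\cl{b}=\void$'' becomes ``$C_1=\void$ or $C_2=\void$''. Hence the condition of Lemma~\ref{closedsubconnected} holds for $\cl{X\difference F}$ exactly when $F$ has no partition into two non-empty closed subspaces, i.e.\ exactly when $F$ is connected. Together with the first paragraph this proves the proposition.

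I do not anticipate a real obstacle: the argument is essentially a dictionary between the coframe of closed sublocales of $\mathcal{O}(X)$ and the lattice of closed subsets of $X$. The only points that need care are getting the coframe identities $\cl{a}\join\cl{b}=\cl{a\inters b}$ and $\cl{a}\inters\cl{b}=\cl{a\cup b}$ the right way round, and using that the closedness of $F$ in $X$ makes ``closed in $F$'' coincide with ``closed in $X$ and contained in $F$'', which is what lets partitions of $F$ by closed-in-$X$ sets detect connectedness of $F$ as a subspace. Note that no separation axiom on $X$ is used, consistent with the hypotheses.
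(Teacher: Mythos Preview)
Your proposal is correct and follows essentially the same approach as the paper: both arguments reduce the claim to the non-void and connectedness clauses and then translate connectedness of the closed sublocale $\cl{X\difference F}$ into the non-existence of a partition of $F$ into two non-empty closed subsets via the coframe correspondence $\cl{a}\leftrightarrow X\difference a$. The paper's proof is simply terser, leaving implicit the coframe identities and the appeal to \lemx\ref{closedsubconnected} that you spell out in detail.
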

\begin{proof}
The closed sublocale $\cl{X\difference F}$ is non-void precisely when $F\neq\emptyset$. Moreover, it is connected as a sublocale if and only if whenever $F=G \cup H$ with $G,H$ closed disjoint subsets of $X$ then either $G=F$ or $H=F$, that is precisely when $F$ is connected as a subspace of $X$.  
\end{proof}

We now extend the notion of unicoherence from topological spaces to locales.

\begin{defne}\label{defunicoh}
	A locale $L$ is \dfn{unicoherent} if whenever $L=H\join K$ with $H,K\cont L$ continua, we have that $H\inters K$ is a continuum.

    $L$ is \dfn{open unicoherent} if whenever $L=A\join B$ with $A,B\cont L$ regions, we have that $A\inters B$ is a region.
\end{defne}

\begin{prop}\label{propunicohtoploc}
	Let $X$ be a topological space. The locale $\mathcal{O}X$ is unicoherent precisely when $X$ is unicoherent.
\end{prop}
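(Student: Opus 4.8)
The plan is to deduce the statement from Proposition~\ref{continuatoploc} by means of the standard dictionary between the closed sublocales of $\mathcal{O}X$ and the closed subspaces of $X$, reading off both implications from the dictionary term by term.

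First I would set up this dictionary. Every closed sublocale of $\mathcal{O}X$ is $\cl{U}=\;\uparrow U$ for a unique $U\in\mathcal{O}X$, and I identify it with the closed subspace $F:=X\difference U$; conversely a closed subspace $F$ corresponds to $\cl{X\difference F}$, and $U\mapsto\cl{U}$ is injective. Under this identification, the top sublocale $\mathcal{O}X$ is $\cl{\void}$; the join of closed sublocales obeys $\cl{U}\join\cl{V}=\cl{U\inters V}$, which corresponds to the union $(X\difference U)\cup(X\difference V)$ of closed subspaces; and the meet obeys $\cl{U}\inters\cl{V}=\cl{U\cup V}$, corresponding to the intersection $(X\difference U)\cap(X\difference V)$. (Both formulas are standard; the one for meets is immediate since $\cl{a}\inters\cl{b}=\;\uparrow a\,\inters\uparrow b=\;\uparrow(a\join b)$, and the one for joins follows from the description of joins of sublocales recalled in the introduction.) In particular, writing $F:=X\difference U$ and $G:=X\difference V$: the equality $\mathcal{O}X=\cl{U}\join\cl{V}$ holds exactly when $U\inters V=\void$, i.e. exactly when $F\cup G=X$; and, since $\cl{U}\inters\cl{V}=\cl{X\difference(F\cap G)}$ is non-void precisely when $F\cap G\neq\void$, Proposition~\ref{continuatoploc} gives that $\cl{U}\inters\cl{V}$ is a continuum in $\mathcal{O}X$ if and only if $F\cap G$ is a continuum in $X$.

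With the dictionary in place the theorem is a direct translation in both directions. If $\mathcal{O}X$ is unicoherent and $X=F\cup G$ with $F,G$ continua of $X$, then $H:=\cl{X\difference F}$ and $K:=\cl{X\difference G}$ are continua of $\mathcal{O}X$ with $H\join K=\mathcal{O}X$ (by Proposition~\ref{continuatoploc} and the dictionary), so $H\inters K$ is a continuum of $\mathcal{O}X$, which is to say $F\cap G$ is a continuum of $X$. Conversely, if $X$ is unicoherent and $\mathcal{O}X=H\join K$ with $H,K$ continua, then $H=\cl{U}$ and $K=\cl{V}$ for some $U,V\in\mathcal{O}X$ by the very definition of a continuum, and $F:=X\difference U$, $G:=X\difference V$ are then continua of $X$ with $F\cup G=X$; unicoherence of $X$ gives that $F\cap G$ is a continuum of $X$, hence $H\inters K=\cl{X\difference(F\cap G)}$ is a continuum of $\mathcal{O}X$.

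Since the argument is essentially bookkeeping, the one place it can go wrong is in the order-reversal: passing to $\cl{U}$ replaces $U$ by its complement, joins of closed sublocales become intersections of opens (hence unions of closed subspaces) and meets become unions of opens (hence intersections of closed subspaces), and one must use $\mathcal{O}X=\cl{\void}$ so that the hypothesis $L=H\join K$ translates to $F\cup G=X$ and not to something about interiors. The subtle point worth isolating beforehand is that the non-voidness built into the word ``continuum'' transfers correctly — this is exactly the observation $\cl{X\difference(F\cap G)}=\void\iff F\cap G=\void$ — so that no separate connectedness hypothesis needs to be carried through the translation.
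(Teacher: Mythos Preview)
Your proposal is correct and is exactly the approach the paper takes: the paper's own proof is the one line ``Straightforward using the definition of unicoherent locale and Proposition~\ref{continuatoploc}'', and what you have written is a careful unpacking of precisely that bookkeeping via the closed-sublocale/closed-subspace dictionary.
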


\begin{proof}
Straightforward using the definition of unicoherent locale and Proposition \ref{continuatoploc}.
\end{proof}

\begin{exampl}
\prox\ref{propunicohtoploc} gives many examples of unicoherent locales. For instance, the locale of reals (which coincides with the locale of opens of the euclidean space $\mathbb{R}$ under mild conditions) is unicoherent, thanks to \prox\ref{propunicohtoploc}. While the locale of opens of a circle is not unicoherent, again thanks to \prox\ref{propunicohtoploc}.
\end{exampl}

In the following, we aim at establishing several equivalent characterizations of the notion of unicoherence for locales, generalizing the ones existing for topological spaces. In particular, some of the equivalent characterizations that we achieve will involve separation conditions for locales.

\begin{defne}\label{defdiff}
	Let $S\cont L$ be a complemented sublocale. $S$ \dfn{separates $L$} if $L\difference S$ is not connected.

	Given $x,y\in L$, $S$ \dfn{separates $x$ and} (or \dfn{from}) \dfn{$y$ in $L$} if $L\difference S$ is not connected and admits a separation $L\difference S\cont \cl{a}\join \cl{b}$ such that $x\in (L\difference S)\inters\cl{a}$ and $y\in (L\difference S)\inters\cl{b}$.

    Given $X,Y\cont L$, $S$ \dfn{separates $X$ and $Y$ in $L$} if $S$ separates every element $x\in X$ from every element $y\in Y$.
\end{defne}

\begin{prop}\label{3-3}
	Let $L$ be a locally connected locale and $\o{u}\cont L$. Given any component $\o{v}$ of $\o{u}$ such that $\closure{\o{v}}\neq L$, we have that $L\difference \bd{\o{v}}=\o{v\st \join v}$ is not connected and
	$$L\difference \bd{\o{v}}=\o{v\st \join v}=\o{v\st}\join \o{v}=(L\difference \closure{\o{v}})\join \o{v}$$
	exhibits a separation. As a consequence, also
		$$L\difference \bd{\o{v}}\cont \closure{L\difference \closure{\o{v}}}\join \closure{\o{v}}$$
		exhibits a separation.
\end{prop}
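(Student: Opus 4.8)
The plan is to translate everything through the standard dictionary for open and closed sublocales and then read the separation off directly. Since $L$ is locally connected, the component $\o{v}$ of the open sublocale $\o{u}$ is itself open (as already used for \corx\ref{corolljoinint}), so it has the form $\o{v}$ for some $v\in L$, with complement $\cl{v}$ and closure $\closure{\o{v}}=\cl{v\st}$. Because $\o{v}$ is open we have $\interior{\o{v}}=\o{v}$, so, using $\cl{a}\inters \cl{b}=\cl{a\join b}$, we recover the formula from \prox\ref{3-4}:
$$\bd{\o{v}}=\closure{\o{v}}\difference \interior{\o{v}}=\cl{v\st}\inters \cl{v}=\cl{v\st\join v}.$$
Taking complements, and using that the complement of $\cl{a}$ is $\o{a}$ together with $\o{a\join b}=\o{a}\join \o{b}$ from \lemx\ref{lemmajoinopens}, this yields at once the displayed chain of equalities
$$L\difference \bd{\o{v}}=\o{v\st\join v}=\o{v\st}\join \o{v}=(L\difference \closure{\o{v}})\join \o{v}.$$

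The next step is to check that $\o{v\st}$ and $\o{v}$ constitute a genuine separation. Both are non-void: $\o{v}$ is non-void as a component (\defx\ref{defcomponent}), while $\o{v\st}=L\difference \closure{\o{v}}$ is non-void precisely by the hypothesis $\closure{\o{v}}\neq L$. They are separated: first, $\o{v\st}\inters \closure{\o{v}}=\o{v\st}\inters \cl{v\st}=\void$ since $\o{v\st}$ and $\cl{v\st}$ are complementary; and $\o{v}$ and $\o{v\st}$ are disjoint open sublocales, as $\o{v}\inters \o{v\st}=\o{v\meet v\st}=\o{0}=\void$, and disjoint open sublocales are always separated (for disjoint opens $\o{a},\o{b}$, from $a\meet b=0$ one gets $b\leq a\st$, hence $\o{b}\cont \o{a\st}=L\difference \closure{\o{a}}$, and symmetrically). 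Hence $L\difference \bd{\o{v}}=\o{v\st}\join \o{v}$ is a join of two non-void separated sublocales, so by \prox\ref{connectedchar}\,(iii) it is not connected, and this is the claimed separation.

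For the last assertion I would note that every sublocale lies in its closure, so $\o{v\st}\cont \closure{\o{v\st}}=\closure{L\difference \closure{\o{v}}}$ and $\o{v}\cont \closure{\o{v}}$, which gives the containment $L\difference \bd{\o{v}}\cont \closure{L\difference \closure{\o{v}}}\join \closure{\o{v}}$. To see that this containment still exhibits the separation, intersect both sides with $L\difference \bd{\o{v}}=\o{v\st}\join \o{v}$ and distribute the meet over this join of opens via \lemx\ref{lemmajoinopens}: since $\o{v\st}$ and $\o{v}$ lie in their own closures and are separated from each other's closures, one obtains $(L\difference \bd{\o{v}})\inters \closure{L\difference \closure{\o{v}}}=\o{v\st}$ and $(L\difference \bd{\o{v}})\inters \closure{\o{v}}=\o{v}$, which are exactly the two pieces above. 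No step here is genuinely hard; the only point requiring care is the bookkeeping with the dictionary $\o{a\join b}=\o{a}\join \o{b}$, $\cl{a\join b}=\cl{a}\inters \cl{b}$, $\closure{\o{a}}=\cl{a\st}$ and the distributivity of meets over joins of open sublocales from \lemx\ref{lemmajoinopens}.
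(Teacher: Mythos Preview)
Your proof is correct and follows essentially the same approach as the paper: both verify that $\o{v}$ and $\o{v\st}$ are non-void, disjoint, and recover the pieces after intersecting the closed cover with $L\difference\bd{\o{v}}$. The only cosmetic difference is that you phrase the first separation via \prox\ref{connectedchar}\,(iii) (non-void separated pieces), while the paper checks directly that the triple intersection $\o{v\st\join v}\inters\o{v\st}\inters\o{v}$ is void; for the closed cover the paper appeals to \prox\ref{closedofsub} to identify the relative closures with $\o{v}$ and $\o{v\st}$, which is exactly what your distributivity computation via \lemx\ref{lemmajoinopens} accomplishes.
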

\begin{proof}
We want to show that $\o{v\st}\join \o{v}$ exhibits a separation of $\o{v\st \join v}$. Since $\o{v\join v\st}= \o{v\st} \join \o{v}$, we have $\o{v\st \join v} \inters \o{v\st} \inters \o{v} = \o{v\st \meet v} = \o {0}= \void.$
Moreover, $\o{v\st \join v} \inters \o{v}=\o{v}$ is non void since $v\neq 0$ and analogously $\o{v\st \join v} \inters \o{v\st}$ is non void since $v\st\neq 0$ because $\cl{v\st}=\closure{\o{v}}\neq L$.

We now observe that, by Proposition \ref{closedofsub}, $\o{v}=(L\difference \bd{\o{v}})\difference (L\difference \closure{\o{v}})$ is closed in $L\difference \bd{\o{v}}$ and analogously $(L\difference \closure{\o{v}})$ is closed in $S$ and then thanks to \prox\ref{closedofsub}
		$$L\difference \bd{\o{v}}\inters \closure{\o{v}}\inters \closure{L\difference \closure{\o{v}}}=\closure[L\difference \bd{\o{v}}]{\o{v}}\inters \closure[L\difference \bd{\o{v}}]{L\difference \closure{\o{v}}}=\o{v}\inters (L\difference \closure{\o{v}})=\void.$$
\end{proof}


\begin{defne}\label{defsimple}
    A sublocale $C\cont L$ is \dfn{simple} if $C$ is complemented and both $C$ and $L\difference C$ are connected.
\end{defne}

\begin{prop}\label{compdiffissimple}
    Let $L$ be a connected locale and $C\cont L$ a continuum in $L$. Then every component of $L\difference C$ is simple.
\end{prop}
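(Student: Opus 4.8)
Write $C=\cl{u}$, so that $L\difference C=\o{u}$, and fix a component $D$ of $\o{u}$; by \defx\ref{defcomponent} the sublocale $D$ is non-void and connected, so in view of \defx\ref{defsimple} the plan is to show, in addition, that $D$ is complemented in $L$ and that $L\difference D$ is connected.

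The first step is to show that $D$ is closed in $\o{u}$. By \prox\ref{closedofsub} one has $\closure[\o{u}]{D}=\closure{D}\inters\o{u}$, so $D\cont\closure[\o{u}]{D}\cont\closure{D}$; and any sublocale $E$ with $D\cont E\cont\closure{D}$ is connected when $D$ is, since a separation $E=P\join Q$ restricts to a separation $D=(D\inters P)\join(D\inters Q)$, forcing (say) $D\cont P$, whence $Q\cont E\cont\closure{D}\cont\closure{P}$ and $\closure{P}\inters Q=\void$ give $Q=\void$. Thus $\closure[\o{u}]{D}$ is a connected sublocale of $\o{u}$ containing $D$, and maximality of the component $D$ forces $\closure[\o{u}]{D}=D$. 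By \prox\ref{closedofsub} again, $D=\cl{w}\inters\o{u}$ for some $w\in L$; being the meet of the complemented sublocales $\cl{w}$ and $\o{u}$, the sublocale $D$ is complemented in $L$, and so is $T:=\o{u}\difference D=\o{u}\inters(L\difference D)$.

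It then remains to prove $L\difference D$ is connected, and here I would apply \lemx\ref{lemmadiffsep} to the decomposition $L\difference C=\o{u}=D\join T$: since $L$ is connected and $C$ is a connected complemented sublocale, that lemma yields that $C\join T$ is connected, and a short computation with complemented sublocales (using $D\cont\o{u}$, $C=L\difference\o{u}$, and finite distributivity in $\calS(L)$) identifies $C\join T=(L\difference\o{u})\join(\o{u}\inters(L\difference D))=(L\difference\o{u})\join(L\difference D)=L\difference D$, so that $L\difference D$ is connected and $D$ is simple. To be allowed to invoke \lemx\ref{lemmadiffsep}, however, the two pieces $D$ and $T$ of $\o{u}$ must be \emph{separated}, equivalently $D$ must be clopen in $\o{u}$ and not merely closed; this is the main obstacle. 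I would derive it from the maximality of the component $D$ inside $\o{u}$, arguing that $D$ coincides with its interior in $\o{u}$, so that the open sublocale $T=\o{u}\difference D$ satisfies $\closure[\o{u}]{T}\inters D=\void$; combined with $\closure[\o{u}]{D}=D$ and \prox\ref{closedofsub} (to pass between closures computed in $L$ and in $\o{u}$) this yields $\closure{D}\inters T=\void=D\inters\closure{T}$, i.e. $D$ and $T$ are separated in $L$. Everything after establishing this separation is the bookkeeping with complemented sublocales indicated above.
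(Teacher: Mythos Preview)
Your argument that $D$ is closed in $\o{u}$, hence of the form $\cl{w}\inters\o{u}$ and therefore complemented as a meet of complemented sublocales, is correct and in fact supplies a detail the paper passes over in silence. The difficulty lies exactly where you flag it: the claim that $D$ is also \emph{open} in $\o{u}$. Maximality of a component yields closedness (as you show), but not openness; having every component of every open sublocale be open is essentially local connectedness, and the proposition is stated for an arbitrary connected locale $L$. Without $D$ open in $\o{u}$ you cannot conclude $\closure[\o{u}]{T}\inters D=\void$, so the pair $D,T$ need not be separated, and your invocation of \lemx\ref{lemmadiffsep} for the decomposition $L\difference C=D\join T$ breaks down.

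The paper sidesteps this by applying \lemx\ref{lemmadiffsep} the other way round. Instead of decomposing $L\difference C$, it takes $D$ itself as the connected complemented sublocale in the lemma and considers an \emph{arbitrary} separation $L\difference D=P\join Q$. The lemma then gives that $D\join P$ and $D\join Q$ are connected. Since the continuum $C$ lies inside $L\difference D$ and is connected, one of $P,Q$---say $P$---misses $C$ entirely, so $D\join P$ is a connected sublocale of $L\difference C$ meeting the component $D$. Maximality of $D$ forces $D\join P\cont D$, whence $P\cont D\inters(L\difference D)=\void$. Thus $L\difference D$ admits no nontrivial separation. This route uses only the complementedness of $D$ that you already established, together with the connectedness of $C$, and never requires $D$ to be open in $\o{u}$.
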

\begin{proof}
Let $U$ be a component of $L\difference C$. We need to prove that $L \difference U$ is connected. Let $P$ and $Q$ be separated sublocales such that $L\difference U=P \join Q$. By Lemma \ref{lemmadiffsep}, $U \join P$ and $U\join Q$ are connected and since $C \cont L \difference U$ is connected we have that either $C \inters P=\void$ or $C\inters Q=\void$. If $C \inters P=\void$, we have that $U\join P$ is a connected contained in $L\difference C$. Since $U\cont U \join P$ is a component of $L\difference C$, we must have $U=U \join P$ and hence $P=\void$. Analogously, if $C \inters Q=\void$, we conclude that $Q=\void$. 
\end{proof}

\begin{defne}\label{defnormsep}
    Let $S,T\cont L$ be sublocales of $L$. $S$ and $T$ are \dfn{normally separated} if there exist $\o{u},\o{v}\cont L$ such that $\o{u}\inters\o{v}=\void$, $S\cont \o{u}$ and $T\cont \o{v}$.

    A sublocale $C\cont L$ is \dfn{normally connected} if whenever $C=S\join T$ with $S$ and $T$ normally separated, then $S=\void$ or $T=\void$.
\end{defne}

\begin{prop}\label{propconnormcon}
    Let $A\cont L$ be a sublocale of $L$. Consider the following properties:
    \begin{enumT}
        \item $A$ is connected;
	\item Given any open sublocale $\o{u}$ such that $A\cont \o{u}$, there exists a connected sublocale $C\cont L$ such that $A\cont C\cont \o{u}$;
        \item $A$ is normally connected;
        \item Given any complemented sublocale $S\cont L$, if $A\inters S\neq \void$ and $A\inters L\difference S\neq \void$ then also $A\inters \bd{S}\neq \void$.
    \end{enumT}
    Then $(i)\implic (ii)\implic (iii) \implic (iv)$. In particular, every connected sublocale $A$ satisfies $(iv)$.
\end{prop}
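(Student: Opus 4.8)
The plan is to prove the three implications in order, observing at once that $(i)\Rightarrow(ii)$ is essentially trivial: if $A$ is connected one takes $C=A$, which obviously satisfies $A\cont C\cont\o{u}$. So the content lies in $(ii)\Rightarrow(iii)$ and $(iii)\Rightarrow(iv)$. For $(ii)\Rightarrow(iii)$ I would assume $A=S\join T$ with $S$ and $T$ normally separated, say $S\cont\o{u}$ and $T\cont\o{v}$ with $\o{u}\inters\o{v}=\void$. Then $A\cont\o{u}\join\o{v}=\o{u\join v}$ by Lemma~\ref{lemmajoinopens}, so $(ii)$ supplies a connected sublocale $C$ with $A\cont C\cont\o{u\join v}$. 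Applying Lemma~\ref{lemmajoinopens} again, $C=(C\inters\o{u})\join(C\inters\o{v})$; by Proposition~\ref{closedofsub} both summands are open in $C$, and since $\o{u}\inters\o{v}=\void$ each is the complement of the other, hence clopen, in $C$. As $C$ is connected, Proposition~\ref{connectedchar} forces one of the two to be $\void$, and then $S\cont A\cont C$ with $S\cont\o{u}$ (respectively $T\cont\o{v}$) forces $S=\void$ (respectively $T=\void$). Thus $A$ is normally connected.

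For $(iii)\Rightarrow(iv)$, let $S$ be complemented with $A\inters S\neq\void$ and $A\inters(L\difference S)\neq\void$, and argue by contradiction, assuming $A\inters\bd{S}=\void$. The key move is to cut $A$ along $\bd{S}$ into two pieces living in disjoint open sublocales. Since $\closure{S}$ and $\interior{S}$ are complemented, distributing $A\inters(-)$ over $\closure{S}\join(L\difference\closure{S})=L$ and over $\interior{S}\join(L\difference\interior{S})=L$, and then using $\interior{S}\cont S\cont\closure{S}$ and $\bd{S}=\closure{S}\difference\interior{S}$, a short computation gives $A\inters\closure{S}=(A\inters\interior{S})\join(A\inters\bd{S})=A\inters\interior{S}$ and hence
$$A=(A\inters\interior{S})\join\bigl(A\inters(L\difference\closure{S})\bigr).$$
The two summands lie in the open sublocales $\interior{S}$ and $L\difference\closure{S}$, which are disjoint because $\interior{S}\cont\closure{S}$; so they are normally separated, and $(iii)$ makes one of them $\void$. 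If $A\inters\interior{S}=\void$ then $A\cont L\difference\closure{S}\cont L\difference S$, contradicting $A\inters S\neq\void$; if $A\inters(L\difference\closure{S})=\void$ then $A\cont\interior{S}\cont S$, contradicting $A\inters(L\difference S)\neq\void$. Either way we reach a contradiction, so $A\inters\bd{S}\neq\void$. Chaining the three implications gives the final assertion that every connected sublocale satisfies $(iv)$.

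I expect the only real obstacle to be the bookkeeping in $(iii)\Rightarrow(iv)$: one must derive the displayed decomposition of $A$ relying solely on the fact that an open sublocale together with its closed complement splits any sublocale as a join (finite distributivity in $\mathcal{S}(L)$, already used implicitly in Lemmas~\ref{boundarydiff} and~\ref{bdinters}), and then notice that the two resulting pieces sit inside honestly \emph{disjoint open} sublocales — which is exactly what licenses the appeal to normal connectedness rather than to mere connectedness. Everything else, namely $(i)\Rightarrow(ii)$ and the clopen argument in $(ii)\Rightarrow(iii)$, is routine.
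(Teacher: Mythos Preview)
Your proof is correct and follows the same approach as the paper: take $C=A$ for $(i)\Rightarrow(ii)$; for $(ii)\Rightarrow(iii)$ use the connected $C$ inside $\o{u}\join\o{v}$ to force one of the two open pieces to be void; and for $(iii)\Rightarrow(iv)$ decompose $A$ into the normally separated pieces $A\inters\interior{S}$ and $A\inters(L\difference\closure{S})$. Your treatment of $(iii)\Rightarrow(iv)$ is in fact tidier than the paper's, since you make explicit the contradiction hypothesis $A\inters\bd{S}=\void$ that is needed to justify the decomposition (the paper leaves this implicit and also has a typo, writing $\interior{A}$ where $\interior{S}$ is meant).
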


\begin{proof}
(i) $\implic$ (ii). Trivial, it suffices to consider $A$ itself as connected sublocale containing $A$.

(ii) $\implic$ (iii). Let $A=S \join T$ with $S$ and $T$ normally separated and let $V$ and $W$ be disjoint open sublocales such that $S \cont V$ and $T \cont W$. Then $A\cont V \join W$ and so by (ii) there exists $C$ connected such that $A \cont C \cont V \join W$. This implies that either $B \inters V=\void$ or $B \inters W=\void$ and hence either $S=\void$ or $T=\void$.

 (iii) $\implic$ (iv). Let $S$ be a complemented sublocale of $L$ such that $A\inters S\neq \void$ and $A\inters L\difference S\neq \void$. Since $A\inters S\neq \void$ we have $\interior{A} \inters S\neq \void$. On the other hand, since $A\inters L\difference S\neq \void$, we have $(L\difference \closure{S}) \inters A\neq \void$. Hence, since $A$ is normally connected and $\interior{S}$ and $L\difference \closure{S}$ are normally separated, $A$ is not contained in $L\difference \bd{S}=\interior{S} \join (L\difference \closure{S})$ and so $A \inters \bd{S} \neq \void$.
\end{proof}

\begin{lemm}\label{lemmasimplebd}
    Let $L$ be an open unicoherent locale. Then every simple sublocale has a normally connected boundary.

    Moreover, if $A,B\cont L$ are simple sublocales of $L$ such that $\bd{A}\inters \bd{B}\neq \void$, then either $A\cont B$, $B\cont A$, $A\inters B=\void$ or $A\join B=L$.
\end{lemm}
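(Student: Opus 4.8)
The plan is to prove the two claims in turn, deriving the second from the first, and to reduce the first to an application of open unicoherence after two preliminary reductions. If $C=\void$ or $C=L$ then $\bd{C}=\void$, which is trivially normally connected; and if $L$ is disconnected, say $L=\o{p}\join\o{q}$ with $\o{p},\o{q}$ non-void and $\o{p}\inters\o{q}=\void$, then (since $C$ is connected it lies in $\o{p}$ or in $\o{q}$, and the same holds for $L\difference C$, forcing $C$ to be $\o{p}$ or $\o{q}$) again $\bd{C}=\void$. So I may assume $L$ connected and $\void\neq C\neq L$. Then $\closure{C}$ and $\closure{L\difference C}$ are continua with $\closure{C}\join\closure{L\difference C}=L$, while $\bd{C}=\closure{C}\inters\closure{L\difference C}$ is closed and complemented with complement $L\difference\bd{C}=\interior{C}\join\interior{L\difference C}$, a join of two disjoint opens. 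Given a normal separation $\bd{C}=M\join N$ witnessed by disjoint opens $\o{u}\contain M$, $\o{v}\contain N$, I may replace $M,N$ by $\bd{C}\inters\o{u}$, $\bd{C}\inters\o{v}$, so that $M$ and $N$ are clopen in $\bd{C}$ — hence, $\bd{C}$ being closed in $L$, closed in $L$ — with $M\inters N=\void$ and $M\join N=\bd{C}$. Put $\o{a}:=L\difference N$ and $\o{b}:=L\difference M$; these are open, $\o{a}\join\o{b}=L\difference(M\inters N)=L$, and $\o{a}\inters\o{b}=L\difference(M\join N)=L\difference\bd{C}=\interior{C}\join\interior{L\difference C}$. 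If $\o{a}$ and $\o{b}$ are regions, open unicoherence forces $\interior{C}\join\interior{L\difference C}$ to be a region; being a join of two disjoint opens, one of $\interior{C}$, $\interior{L\difference C}$ is $\void$, whence $\bd{C}$ equals $\closure{C}$ or $\closure{L\difference C}$ and so is connected (a closure of a connected sublocale is connected), so by \prox\ref{connectedchar} its clopen sublocale $M$ is $\void$ or $\bd{C}$, i.e.\ $M=\void$ or $N=\void$. Retracing the reduction, every normal separation of $\bd{C}$ is trivial, so $\bd{C}$ is normally connected.

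The only non-formal point is that $\o{a}=L\difference N$ and $\o{b}=L\difference M$ are connected, and this is the step I expect to be the main obstacle. They are clearly open and non-void (the degenerate case $\bd{C}=L$ is immediate). For connectedness, note, using $\bd{L\difference C}=\bd{C}$ from \lemx\ref{boundarydiff}, that $\closure{C}\difference N=\interior{C}\join M$ and $\closure{L\difference C}\difference N=\interior{L\difference C}\join M$, that their join is precisely $\o{a}$, and that they meet in $M\neq\void$; hence, \emph{provided} $\closure{C}\difference N$ and $\closure{L\difference C}\difference N$ are themselves connected, $\o{a}$ is connected by \prox\ref{joinconnected} (and symmetrically for $\o{b}$). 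Thus everything reduces to showing that deleting the clopen portion $N$ (or $M$) of the shared boundary from the continuum $\closure{C}$ (or $\closure{L\difference C}$) leaves a connected sublocale; establishing this — from the connectedness of these continua, the partition $\bd{C}=M\join N$, and the fact that the separation is genuinely \emph{normal} (so that $M$ and $N$ really sit inside disjoint opens) — is where I expect the real effort to lie.

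For the second claim I would argue by contradiction, assuming none of $A\cont B$, $B\cont A$, $A\inters B=\void$, $A\join B=L$ holds, i.e.\ the four complemented ``quadrants'' $A\inters B$, $A\difference B$, $B\difference A$, $L\difference(A\join B)$ are all non-void. Since $A$, $B$, $L\difference A$, $L\difference B$ are connected, \prox\ref{propconnormcon}(iv) applied to $A$ and to $L\difference A$ (each split by $B$) gives $\bd{B}\inters A\neq\void\neq\bd{B}\inters(L\difference A)$, and symmetrically $\bd{A}\inters B\neq\void\neq\bd{A}\inters(L\difference B)$; moreover $\bd{A}\inters\bd{B}\neq\void$ by hypothesis. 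By the first claim $\bd{A}$ and $\bd{B}$ are normally connected, so the decompositions $\bd{B}=(\bd{B}\inters A)\join(\bd{B}\inters(L\difference A))$ and $\bd{A}=(\bd{A}\inters B)\join(\bd{A}\inters(L\difference B))$ into complemented, non-void parts cannot be normal separations. The plan is to combine these non-separation facts with the location data and with \lemx\ref{bdinters} (which bounds $\bd{A\inters B}$ and $\bd{A\join B}$ by $\bd{A}\join\bd{B}$) to exhibit an explicit normal separation of $\bd{A}$ or $\bd{B}$ — for instance by showing that, under the four-quadrant hypothesis, the relevant pieces of $\bd{B}$ land inside the disjoint opens $\interior{A}$ and $\interior{L\difference A}$ — contradicting normal connectedness. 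Organising this case analysis so that ``$\bd{A}\inters\bd{B}\neq\void$'' is shown incompatible with all four quadrants being non-void is the delicate part here.
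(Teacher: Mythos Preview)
For the first claim, the reduction you set up is natural, but the step you yourself flag as ``where the real effort lies'' --- that $\closure{C}\difference N=\interior{C}\join M$ and $\closure{L\difference C}\difference N$ remain connected --- is a genuine obstacle, not a routine verification. Even though $\closure{C}$ is connected, $\interior{C}$ need not be, and there is no evident reason why the remaining boundary piece $M$ should reconnect its components; the normality of the separation $(M,N)$ does not obviously help here. I do not see how to close this gap without importing a different idea. The paper's proof avoids this difficulty entirely by applying open unicoherence to a different pair of regions: instead of proving $\bd{C}$ normally connected directly, it verifies condition (ii) of \prox\ref{propconnormcon}. Given any open $U\contain \bd{C}$, set
\[
B:=\bigjoin\set{D\cont U}{D \text{ component of } U,\ D\inters\bd{C}\neq\void}.
\]
Then $\bd{C}\cont B\cont U$, and one applies open unicoherence to the pair $C\join B$ and $(L\difference C)\join B$: these are open (since $\bd{C}\cont B$ forces $C\join B=\interior{C}\join B$, and similarly for $L\difference C$), connected (by \prox\ref{joinconnected}, each component $D$ meeting $\bd{C}$ meets both $\closure{C}$ and $L\difference C$), and cover $L$, with intersection exactly $B$. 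Thus $B$ is connected, giving (ii) of \prox\ref{propconnormcon} and hence normal connectedness of $\bd{C}$.

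For the second claim, you have been misled by a typo in the statement: the hypothesis should read $\bd{A}\inters\bd{B}=\void$, not $\neq\void$. With $\neq\void$ the assertion is simply false (two overlapping round disks in the plane are simple with intersecting boundary circles, yet satisfy none of the four alternatives), so your contradiction strategy cannot succeed. The paper's argument under the correct hypothesis $\bd{A}\inters\bd{B}=\void$ is short: after disposing of $B=\void$ and $B=L$, one has $\bd{B}\neq\void$, so $\bd{B}$ is not contained in $A\inters(L\difference A)=\void$. If $\bd{B}$ is not contained in $A$, then $\bd{B}\inters(L\difference A)\neq\void$; since $\bd{B}$ is normally connected (Part~1) and $\bd{A}\inters\bd{B}=\void$, \prox\ref{propconnormcon}(iv) applied to $\bd{B}$ and the complemented sublocale $A$ forces $\bd{B}\inters A=\void$; then the connected $A$ misses $\bd{B}$, so by \prox\ref{propconnormcon}(iv) again $A\inters B=\void$ or $A\cont B$. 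If instead $\bd{B}$ is not contained in $L\difference A$, run the same argument with $L\difference A$ in place of $A$ to obtain $A\join B=L$ or $B\cont A$.
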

\begin{proof}
Let $A\cont L$ be a simple sublocale and let $U\cont L$ be an open sublocale such that $\bd{A} \cont U$. Consider the following sublocale:

$$B=\bigjoin \set{D\cont U}{ D \text{ component}, D \inters \bd{A} \neq \void}$$
Then $B$ is open and $\bd{A} \cont B \cont U$. Moreover, $B$ is connected. Indeed, notice that $B=(A \join B) \inters ((L\difference A) \join B)$. Both $A \join B$ and $(L\difference A) \join B$ are connected because for every component $D\cont U$ such that $D \inters \bd{A} \neq \void$ we have $D \inters \closure{A}\neq \void$ and $D\inters (L\difference A) \neq \void$. In addition to this, $A \join B$ and $(L\difference A) \join B$ are open sublocales. The fact that $L$ is open unicoherent implies then that $B$ is connected. And so, we have shown that condition (ii) of Proposition \ref{propconnormcon} holds for $A$. Hence $A$ is normally connected.

We now prove the second part of the statement. Let $A$ and $B$ be simple sublocales of $L$ such that $\bd{A} \inters \bd{B} = \void$. If $B= \void$ then $B\cont A$ and if $B=L$ then $A \cont B$.  If $B$ is not void and not the entire $L$, then $B$ cannot be clopen because $L$ is connected. This implies that $\bd{B}$ is not void. But then $\bd{B}$ cannot be contained in $A \inters (L\difference A)$. If $\bd{B}$ is not contained in $A$ then $\bd{B} \inters (L\difference A) \neq \void$. Since $\bd{B}$ is normally connected (by the first part of this Lemma) and $\bd{A}\inters \bd{B} =\void$, by (iv) of Proposition \ref{propconnormcon}, it must be $\bd{B}\inters A=\void$ and since $A$ is connected this implies either $A \inters B=\void$ or $A \inters (L \difference B)=\void$ and thus $A \cont B$. Analogously, if $\bd{B}$ is not contained in $L \difference A$, since $L\difference A$ is connected we can apply the same argument to $L\difference A$ instead of $A$ and conclude that either $A \join B=L$ or $B \cont A$.
 \end{proof}

\begin{lemm}\label{bdinfinitejoin}
    Let $L$ be strongly locally connected and $\fami{\o{u_i}}{i\in I}$ be a family of open sublocales. Then 
    $$\bd{\bigjoin{\fami{\o{u_i}}{i\in I}}} \cont \closure{\bigjoin{\fami{\bd{\o{u_i}}}{i\in I}}}.$$
\end{lemm}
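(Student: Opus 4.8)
The plan is to reduce the claimed inclusion of sublocales to a single inequality in the frame $L$, and then to establish that inequality using strong local connectedness. First I would set $u:=\bigjoin_{i\in I}u_i$, so that $\bigjoin_{i\in I}\o{u_i}=\o{u}$ by \lemx\ref{lemmajoinopens}. For any open sublocale $\o{a}\cont L$ we have $\interior{\o{a}}=\o{a}$ and $\closure{\o{a}}=\cl{a\st}$, hence (as in the proof of \prox\ref{3-4}) $\bd{\o{a}}=\closure{\o{a}}\setminus\o{a}=\cl{a\st}\inters\cl{a}=\cl{a\join a\st}$. In particular $\bd{\bigjoin_{i\in I}\o{u_i}}=\cl{u\join u\st}$ and $\bd{\o{u_i}}=\cl{u_i\join u_i\st}$ for every $i\in I$.

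Next I would identify the right-hand side. Since $\closure{T}=\;\uparrow\!\bigl(\bigwedge T\bigr)$ for every sublocale $T$, and since the bottom element of the join $\bigjoin_{i\in I}\cl{u_i\join u_i\st}$ is $\bigwedge\bigl(\bigcup_{i\in I}\uparrow(u_i\join u_i\st)\bigr)=\bigwedge_{i\in I}(u_i\join u_i\st)$, we obtain
$$\closure{\bigjoin_{i\in I}\bd{\o{u_i}}}=\cl{\,\bigwedge_{i\in I}(u_i\join u_i\st)\,}.$$
Writing $w:=\bigwedge_{i\in I}(u_i\join u_i\st)$ and using that $\cl{a}\cont\cl{b}$ if and only if $b\leq a$, the goal $\cl{u\join u\st}\cont\cl{w}$ becomes the frame inequality $w\leq u\join u\st$. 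Since $\o{-}$ is an order embedding of $L$ into $\calS(L)$, it suffices to prove $\o{w}\cont\o{u\join u\st}=\o{u}\join\o{u\st}$.

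This last inclusion is the heart of the proof, and the only place strong local connectedness is used. I would argue as follows. Let $x\in\o{w}$ with $x\neq1$. Strong local connectedness produces an open connected (hence non-void) sublocale $\o{v}$ with $x\in\o{v}\cont\o{w}$, so that $v$ is a non-zero connected element of $L$. For each $i\in I$, since $w\leq u_i\join u_i\st$ we get $\o{v}\cont\o{u_i}\join\o{u_i\st}$, and $\o{u_i}\inters\o{u_i\st}=\o{u_i\meet u_i\st}=\void$; connectedness of $v$ then forces either $v\leq u_i$ or $v\meet u_i=0$. If $v\leq u_{i_0}$ for some $i_0$, then $\o{v}\cont\o{u_{i_0}}\cont\bigjoin_{i\in I}\o{u_i}=\o{u}$. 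Otherwise $v\meet u_i=0$ for all $i\in I$, so by the frame distributive law $v\meet u=\bigjoin_{i\in I}(v\meet u_i)=0$, giving $v\leq u\st$. Either way $x\in\o{u}\join\o{u\st}$; since $1\in\o{u}\join\o{u\st}$ as well, we conclude $\o{w}\cont\o{u}\join\o{u\st}$, hence $w\leq u\join u\st$, which is what we wanted.

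The main obstacle I anticipate is in the second step: one must carefully determine the bottom element of an \emph{infinite} join of closed sublocales, and hence its closure. One should also double-check that the third step genuinely needs \emph{strong} local connectedness rather than plain local connectedness --- the latter would only express $w$ as a join of connected elements, which is not enough to place $x$ inside a single connected open sublocale. The remaining ingredients (the identity $\bd{\o{a}}=\cl{a\join a\st}$, the handling of pseudocomplements, the coframe behaviour of $\calS(L)$, and the frame distributive law) are routine.
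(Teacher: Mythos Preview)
Your proof is correct, and it takes a genuinely different route from the paper's. You reduce the statement to a single frame inequality $\bigwedge_{i}(u_i\vee u_i^{\ast})\leq u\vee u^{\ast}$ (where $u=\bigvee_i u_i$) by computing both sides explicitly as closed sublocales, and then verify that inequality by running the connected element $v$ through the dichotomy ``$v\leq u_i$ or $v\wedge u_i=0$'' and finishing with the frame distributive law. The paper instead argues by contradiction at the sublocale level: it supposes some $x\in\bd{\o{u}}$ lies in the open complement of $\closure{\bigjoin_i\bd{\o{u_i}}}$, produces (via strong local connectedness) a connected open neighbourhood $\o{v^x}$ of $x$ inside that complement, and then uses \lemx\ref{neighboundary}, \lemx\ref{lemmajoinint} and \prox\ref{propconnormcon} to force $\o{v^x}\cap\bd{\o{u_{i_0}}}\neq\void$ for some $i_0$, a contradiction.

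The two arguments share the same kernel (a small connected open witness placed by strong local connectedness), but your algebraic reduction bypasses the neighbourhood machinery (\lemx\ref{neighinterior}, \lemx\ref{neighboundary}, \prox\ref{propconnormcon}) entirely and is more self-contained. The paper's argument, on the other hand, stays closer to the topological intuition and reuses lemmas already in place for the main theorems. One minor point worth making explicit in your write-up: when you assert ``either $v\leq u_i$ or $v\wedge u_i=0$'', you are using that $v\leq u_i\vee u_i^{\ast}$ gives $v=(v\wedge u_i)\vee(v\wedge u_i^{\ast})$ by distributivity, so connectedness of $v$ kills one summand; if $v\wedge u_i^{\ast}=0$ then indeed $v=v\wedge u_i\leq u_i$ (not merely $v\leq u_i^{\ast\ast}$). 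This is fine, but deserves a line.
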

\begin{proof}
We prove that $\bd{\bigjoin{\fami{\o{u_i}}{i\in I}}} \inters (L \difference (\closure{\bigjoin{\fami{\bd{\o{u_i}}}{i\in I}}}))=\void$, because than we obtain $\bd{\bigjoin{\fami{\o{u_i}}{i\in I}}} \cont \closure{\bigjoin{\fami{\bd{\o{u_i}}}{i\in I}}}.$ If $\bd{\bigjoin{\fami{\o{u_i}}{i\in I}}} \inters (L \difference (\closure{\bigjoin{\fami{\bd{\o{u_i}}}{i\in I}}}))\neq\void$, since $L \difference (\closure{\bigjoin{\fami{\bd{\o{u_i}}}{i\in I}}})$ is open, by Lemma \ref{neighinterior},  there exist $x\in \bd{\bigjoin{\fami{\o{u_i}}{i\in I}}} \inters (L \difference (\closure{\bigjoin{\fami{\bd{\o{u_i}}}{i\in I}}}))$ and an open sublocale $U_x \cont L \difference (\closure{\bigjoin{\fami{\bd{\o{u_i}}}{i\in I}}})$ such that $x\in U_x$. Since $L$ is strongly locally connected, there exists an open connected sublocale $\o{v^x}$ such that $\o{v^x} \cont U_x$ and $x\in\o{v^x}$. Since $x\in \bd{\bigjoin{\fami{\o{u_i}}{i\in I}}}$, by Lemma \ref{neighboundary}, it must be $\o{v^x} \inters \bigjoin{\fami{\o{u_i}}{i\in I}}\neq \void$. Hence, by Lemma \ref{lemmajoinint}, there exists $i_0\in I$ such that $\o{v^x}\inters \o{u_{i_0}} \neq \void$. On the other hand,  by Lemma \ref{lemmajoinopens}, we have $L\difference \bigjoin{\fami{\o{u_i}}{i\in I}}= \inters \fami{L \difference\o{u_i}}{i\in I}$. Whence, since $\o{v^x} \inters L\difference \bigjoin{\fami{\o{u_i}}{i\in I}}\neq \void$ by Lemma \ref{neighboundary}, we have that for every $i\in I$ $\o{v^x} \inters (L\difference \o{u_{i}})\neq \void$. In particular, $\o{v^x} \inters (L\difference \o{u_{i_0}})\neq \void$. Hence  the connected $\o{v^x}$ intersects both $\o{u_{i_0}}$ and $L\difference \o{u_{i_0}}$ and so, by Proposition \ref{propconnormcon}, $\o{v^x}\inters \bd{\o{u_{i_0}}}\neq \void$.  But this is a contradiction since $\bd{\o{u_{i_0}}} \cont \bigjoin{\fami{\bd{\o{u_i}}}{i\in I}} \cont \closure{\bigjoin{\fami{\bd{\o{u_i}}}{i\in I}}}$ and $\o{v^x} \cont L \difference (\closure{\bigjoin{\fami{\bd{\o{u_i}}}{i\in I}}})$. Hence we conclude $\bd{\bigjoin{\fami{\o{u_i}}{i\in I}}} \inters (L \difference (\closure{\bigjoin{\fami{\bd{\o{u_i}}}{i\in I}}}))=\void$. 
   \end{proof}

\section{Main Results}
\label{Section: Main Results}
We now present several characterizations of unicoherence for a connected, locally connected locale. The results presented here are obtained through generalizations of properties that for topological spaces were known to be equivalent to unicoherence. Many of such properties are recorded in the survey paper \cite{GarIll89}. The properties for locales that we obtain here, together with the detailed proofs of their equivalence, will form the foundation for further study of unicoherent locales in future.

\begin{teor}\label{mainthm}
    Let $L$ be a connected and locally connected locale. The following properties are equivalent:
    \begin{enumerate}[itemsep=2.5mm]
\item[\Rom{1}] Whenever $X,Y\cont L$ with $X\neq \void\neq Y$ and $\cl{u},\cl{v}\cont L$ are such that $\cl{u}\inters\cl{v}=\void$ and neither $\cl{u}$ nor $\cl{v}$ separate any element of $X$ (different from 1) from any element of $Y$ (different from 1) in $L$, we have that $\cl{u}\join \cl{v}$ does not separate $X$ and $Y$ in $L$;
\item[\Rom{2}](Brouwer Property) If $\cl{u}\cont L$ is a continuum, then every component $D$ of $L\difference \cl{u}=\o{u}$ has as boundary $\bd{D}$ a continuum;
\item[\Rom{3}](Unicoherence) $L$ is unicoherent;
\item[\Rom{4}] If $\cl{u}\cont L$ is non-void and $D_1,D_2$ are disjoint components of $L\difference\cl{u}=\o{u}$ such that $\bd{D_1}=\bd{D_2}$ then $\bd{D_1}$ is a continuum;
\item[\Rom{5}] If $C$ and $D$ are disjoint complemented connected sublocales of $L$ such that $\bd{C}\cont \bd{D}$, then $\bd{C}$ is connected;
\item[\Rom{6}] If $C\cont L$ is simple, then $\bd{C}$ is connected;
\item[\Rom{7}] If $R$ is a simple region, then $\bd{R}$ is connected;
\item[\Rom{8}] If $A$ and $B$ are disjoint regions such that $\bd{A}=\bd{B}$, then $\bd{A}$ is connected;
\item[\Rom{9}] If $A$ and $B$ are regions such that $\bd{A}\inters \bd{B}=\void$, then $A\inters B$ is connected;
\item[\Rom{10}] (Open unicoherence) $L$ is open unicoherent.
    \end{enumerate}
\end{teor}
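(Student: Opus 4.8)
I would prove the ten conditions equivalent by a single cycle of implications, organised so that each step crosses at most once between the ``closed/continuum'' conditions $\Rom{1}$--$\Rom{5}$ and the ``open/region'' conditions $\Rom{6}$--$\Rom{10}$. A first batch of links is essentially bookkeeping and I would clear it at once. The implication $\Rom{6}\implic\Rom{7}$ is immediate, a simple region being a simple sublocale. For $\Rom{5}\implic\Rom{6}$, given a simple $C$ one applies $\Rom{5}$ to the disjoint complemented connected pair $C$, $L\difference C$, using $\bd{C}=\bd{L\difference C}$ (Lemma \ref{boundarydiff}); and $\Rom{5}\implic\Rom{4}$ is the same argument, with $C=D_1$, $D=D_2$, since components of an open sublocale are connected and complemented. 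For $\Rom{3}\implic\Rom{2}$: if $\cl{u}$ is a continuum and $D$ a component of $\o{u}$, then $D$ is simple by Proposition \ref{compdiffissimple}, so $\closure{D}$ and $\closure{L\difference D}$ are continua with $\closure{D}\join\closure{L\difference D}=L$; unicoherence makes $\closure{D}\inters\closure{L\difference D}$ a continuum, and this intersection equals $\bd{D}$ because $L\difference\interior{D}=\closure{L\difference D}$ for complemented $D$ (and $\bd{D}\neq\void$ by Proposition \ref{3-4}).

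The remaining links are ``translations'' plus one hard step. The translations rest on pseudocomplement identities: for regions $A=\o{a}$, $B=\o{b}$ one has $\closure{A}=\cl{a\st}$ and $\bd{A}=\cl{a\join a\st}$, whence $\bd{A}\inters\bd{B}=\cl{(a\join b)\join a\st\join b\st}$ and $\closure{A}\join\closure{B}=\cl{a\st\meet b\st}$; so $A\join B=L$ automatically gives $\bd{A}\inters\bd{B}=\void$ and $\closure{A}\join\closure{B}=L$. This makes $\Rom{9}\implic\Rom{10}$ immediate (the hypothesis of $\Rom{9}$ holds for free once $A\join B=L$, and $A\inters B$ is then an open connected sublocale, non-void because $L$ is connected, hence a region), and it similarly links $\Rom{3}$ with $\Rom{10}$. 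The separation condition $\Rom{1}$ enters through the same circle of ideas: given disjoint $\cl{u},\cl{v}$ satisfying the non-separation hypothesis, I would place two elements $x,y$ into the components $C_u$ of $\o{u}$ and $C_v$ of $\o{v}$ containing them (using local connectedness, Corollary \ref{corolljoinint} and Lemma \ref{lemmajoinint}), observe that $\bd{C_u}\cont\cl{u}$ and $\bd{C_v}\cont\cl{v}$ (Proposition \ref{3-4}) force $\bd{C_u}\inters\bd{C_v}=\void$, and invoke $\Rom{9}$ to get $C_u\inters C_v$ connected and containing both $x$ and $y$, which contradicts any separation of $x,y$ by $\cl{u}\join\cl{v}$. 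This yields $\Rom{9}\implic\Rom{1}$, and the mirror argument (a region is connected, hence admits no separation) yields $\Rom{1}\implic\Rom{10}$. To come back from $\Rom{10}$ into $\Rom{6}$, $\Rom{7}$, $\Rom{8}$ I would use the dichotomy of Lemma \ref{lemmasimplebd} for simple sublocales in an open unicoherent locale, together with Proposition \ref{propconnormcon} to upgrade ``normally connected'' to ``connected''.

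The main obstacle is the one genuinely hard step: recovering unicoherence $\Rom{3}$ from a condition on the open side, i.e.\ the classical passage from Brouwer's property $\Rom{2}$ to unicoherence. The engine is Theorem \ref{3-10}. Assuming unicoherence fails, write $H\inters K=\cl{a}\join\cl{b}$ with $\cl{a},\cl{b}$ disjoint non-void closed (Lemma \ref{closedsubconnected}), and apply Theorem \ref{3-10} with the two closed sublocales $\cl{a},\cl{b}$ and $N=L$ to obtain a component $D$ of $L\difference(\cl{a}\join\cl{b})=\o{h}\join\o{k}$ whose closure meets both $\cl{a}$ and $\cl{b}$; Propositions \ref{3-4} and \ref{componentbd} then put $\bd{D}\cont H\inters K$ meeting both of the disjoint closed pieces, so $\bd{D}$ is disconnected, and this must be played off against the hypothesis. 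The difficulty I anticipate is exactly that $D$ is a component of the \emph{join} $\o{h}\join\o{k}$, not of $\o{h}$ or $\o{k}$ separately, so a hypothesis stated for components of the complement of a single continuum (or for a single region) does not apply directly; the remedy is to intersect $D$ with $\o{h}$ and with $\o{k}$ and, by local connectedness (Lemma \ref{lemmajoinint}, Corollary \ref{corolljoinint}), extract honest components of $\o{h}$ or of $\o{k}$ that lie inside $D$ and control $\bd{D}$ via Proposition \ref{3-3}. Once this step is available the cycle closes, and all ten conditions are equivalent.
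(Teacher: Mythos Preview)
Your plan has a genuine gap at the return $\Rom{10}\implic\Rom{6}$ (and $\Rom{7}$, $\Rom{8}$). Lemma~\ref{lemmasimplebd} gives only that the boundary of a simple sublocale is \emph{normally} connected, and Proposition~\ref{propconnormcon} runs from connected to normally connected, not back; there is no upgrade in the reverse direction without extra hypotheses (such as normality of $L$). The paper never attempts this shortcut: it proves the substantive implication $\Rom{10}\implic\Rom{9}$ instead, using the dichotomy part of Lemma~\ref{lemmasimplebd} to enlarge given regions $A,B$ with $\bd{A}\inters\bd{B}=\void$ to regions $A\st,B\st$ satisfying $A\st\join B\st=L$ and $A\st\inters B\st=A\inters B$, and only reaches $\Rom{6}$ through the long route $\Rom{9}\implic\Rom{1}\implic\Rom{2}\implic\Rom{3}\implic\Rom{5}\implic\Rom{6}$.

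More seriously, even granting every arrow you list, your graph is not strongly connected. Nothing points \emph{into} $\Rom{5}$ and nothing leaves $\Rom{4}$, $\Rom{7}$, or $\Rom{8}$; and the pair $\{\Rom{2},\Rom{3}\}$ is linked only to itself, since your pseudocomplement remark does not give $\Rom{3}\iff\Rom{10}$: passing from regions $A,B$ with $A\join B=L$ to the continua $\closure{A},\closure{B}$ yields connectedness of $\closure{A}\inters\closure{B}$, but that set is in general strictly larger than $A\inters B$ and connectedness of a closure does not descend. The paper supplies precisely the missing links $\Rom{3}\implic\Rom{5}$, $\Rom{7}\implic\Rom{2}$, $\Rom{8}\implic\Rom{9}$ and $\Rom{4}\implic\Rom{1}$, and several of these are substantial --- for instance $\Rom{3}\implic\Rom{5}$ assembles $\closure{L\difference C}$ as the join of $\closure{D}$ with all components of $L\difference\closure{C\join D}$ and uses Proposition~\ref{joinconnected} to show this join is connected. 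Finally, on your ``hard step'' $\Rom{2}\implic\Rom{3}$ the engine (Theorem~\ref{3-10}) is right but the anticipated difficulty dissolves: since $L=H\join K$ and any component $D$ of $L\difference(H\inters K)$ is connected with $D\inters H\inters K=\void$, one of $D\inters H$, $D\inters K$ must be void, so $D$ already lies in $L\difference H$ or $L\difference K$ and is a component there; there is no need to intersect and re-extract. The paper secures the desired containment by taking $N$ to be one of the continua in Theorem~\ref{3-10}.
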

\begin{proof}


\Rom{1} $\implic$ \Rom{2}. We prove that if \Rom{2} does not hold then \Rom{1} does not hold. So consider a continuum $\cl{u}\cont L$ and a component $D$ of $L\difference \cl{u}=\o{u}$ such that $\bd{D}$ is not a continuum. Since $L$ is locally connected, $D$ is an open sublocale $\o{v}$ of $L$. Notice that $\cl{u}\neq L$ because $L\difference L=\void$ has no components. And $\o{v}\neq L$, because by uniqueness of the pseudo-complement $L\difference \cl{u}$ cannot be the whole $L$, as $\cl{u}\neq\void$. Then by \prox\ref{3-4} $\bd{\o{v}}\neq \void$ and $\bd{\o{v}}\cont \cl{u}$. And of course $\bd{\o{v}}$ is a closed sublocale of $L$, by definition. Since $\bd{\o{v}}$ is not a continuum, we obtain that $\bd{\o{v}}$ is not connected. By \lemx\ref{closedsubconnected}, since $\bd{\o{v}}$ is closed, there exist non-void $\cl{a},\cl{b}\cont L$ such that $\bd{\o{v}}=\cl{a}\join \cl{b}$ and $\cl{a}\inters\cl{b}=\void$.
		
		\noindent We want to construct $X,Y\cont L$ with $X\neq\void\neq Y$ such that neither $\cl{a}$ nor $\cl{b}$ separate any element of $X$ from any element of $Y$ in $L$, but $\bd{\o{v}}=\cl{a}\join \cl{b}$ separates $X$ and $Y$ in $L$. We first show that $\closure{\o{v}}\neq L$. For this, we show that $\cl{u}$ cannot be entirely contained in $\closure{\o{v}}$. Notice that $\closure{\o{v}}=\o{v}\join \bd{\o{v}}$ by definition of boundary. So if we had that $\cl{u}\cont \closure{\o{v}}$, then $\cl{u}\cont \bd{\o{v}}=\cl{a}\join \cl{b}$, because $\cl{u}\inters \o{v}=\void$. But $\void\neq\cl{a}\cont \bd{\o{v}}\cont \cl{u}$ and $\void\neq\cl{b}\cont \bd{\o{v}}\cont \cl{u}$. So this would contradict the connectedness of $\cl{u}$. We then have that $\closure{\o{v}}\neq L$. By \prox\ref{3-3},
		$$L\difference \bd{\o{v}}=\o{v\st\join v}=\o{v\st}\join \o{v}=(L\difference \closure{\o{v}})\join \o{v}$$
		exhibits a separation. And as a consequence also
		$$L\difference \bd{\o{v}}\cont \closure{L\difference \closure{\o{v}}}\join \closure{\o{v}}$$
		exhibits a separation.
        
        
        We define $X:=\o{v}$, which is non-void. By \thex\ref{3-10}, with $A:=\cl{a}$ and $B:=\cl{b}$ (which are disjoint closed sublocales) and $N:=\cl{u}$, there exists a component $C$ of $L\difference (\cl{a}\join \cl{b})=L\difference \bd{\o{v}}$ such that $\cl{u}\inters C\neq \void$, $\closure{C}\inters \cl{a}\neq \void$ and $\closure{C}\inters \cl{b}\neq \void$. We define $Y:=\cl{u}\inters C$.
		
		\noindent $\bd{\o{v}}$ separates $X$ and $Y$ in $L$. Indeed $Y\cont L\difference \closure{\o{v}}$, since $Y\cont C\cont L\difference \bd{\o{v}}$ and $Y\inters \o{v}\cont \cl{u}\inters \o{v}=\void$. But $\cl{a}$ does not separate any element $x$ of $X$ (different from 1) from any element $y$ of $Y$ (different from 1). Indeed, consider (if it exists) a separation $L\difference \cl{a}\cont\cl{l}\join\cl{m}$ with $(L\difference \cl{a})\inters \cl{l}\inters \cl{m}=\void$, $(L\difference \cl{a})\inters \cl{l}\neq \void$ and $(L\difference \cl{a})\inters \cl{m}\neq \void$. And assume that we have $x\in \cl{l}$ and $y\in \cl{m}$. Notice that $\o{v}\cont L\difference \cl{a}$ because $\o{v}\inters \cl{a}\cont \o{v}\inters \bd{\o{v}}=\void$. Since $\o{v}$ is connected, $\o{v}\inters \cl{l}=\void$ or $\o{v}\inters \cl{m}=\void$. Since $x\in \o{v}\inters \cl{l}$, we get $\o{v}\inters \cl{m}=\void$ and $\o{v}\cont \cl{l}$. Whence also $\closure{\o{v}}\cont \cl{l}$. Analogously, $C\cont L\difference \cl{a}$, $C$ is connected and $y\in C\inters \cl{m}$, so $C\cont \cl{m}$ and also $\closure{C}\cont \cl{m}$. But then $\closure{C}\inters \cl{b}\cont (L\difference \cl{a})\inters \cl{l}\inters \cl{m}$. Indeed $\closure{C}\inters \cl{b}\cont \cl{b}\cont L\difference \cl{a}$ since $\cl{a}\inters \cl{b}=\void$, $\closure{C}\inters \cl{b}\cont\closure{C}\cont \cl{m}$ and $\closure{C}\inters \cl{b}\cont\cl{b}\cont \bd{\o{v}}\cont \closure{\o{v}}\cont \cl{l}$. Since $\closure{C}\inters \cl{b}\neq \void$, this contradicts the separation $L\difference \cl{a}\cont\cl{l}\join\cl{m}$. We thus conclude that $\cl{a}$ does not separate any element $x\in X$ from any element $y\in Y$. Analogously, $\cl{b}$ does not separate any element $x\in X$ from any element $y\in Y$. So \Rom{1} does not hold.

\Rom{2} $\implic$ \Rom{3}. We prove that if \Rom{3} does not hold then \Rom{2} does not hold. Let $\cl{u}, \cl{v} \cont L$ be continua such that $L=\cl{u} \join \cl{v}$ and $\cl{u\join v}$ is not a continuum. Since $L$ is connected and $L=\cl{u} \join \cl{v}$, we have $\cl{u\join v}=\cl{u} \inters \cl{v}\neq \void$ and so it must be $\cl{u\join v}$ not connected. By Lemma \ref{closedsubconnected}, there exist $\cl{a}$ and $\cl{b}$ non void closed sublocales of $L$ such that $\cl{u\join v}=\cl{a} \join \cl{b}$, $\cl{a} \inters \cl{b}=\void$. Since $\cl{v}$ is connected and $\cl{v}$ intersects both $\cl{a}$ and $\cl{b}$, by Theorem \ref{3-10} there exists a component $\o{x}$ of $L\difference \cl{u \join v}$ such that $\closure{\o{x}} \inters \cl{a}\neq \void$, $\closure{\o{x}} \inters \cl{b}\neq \void$ and $\o{x} \inters \cl{v}\neq \void$. Since $\cl{u} \inters \cl{v} \neq \void$, by Proposition \ref{3-4}, we have $\bd{\o{x}}\neq \void$ and $\bd{\o{x}}\cont \cl{u} \join \cl{v}$. We now prove that $\o{x}$ is a component of $L \difference \cl{u}$ and that $\bd{\o{x}}$ is not connected.  Since $\o{x} \inters \cl{u}=\void$, we have that $\o{x} \cont L\difference \cl{u}$ and, by Proposition \ref{connectedsub}, $\o{x}$ is connected in $L\difference \cl{u}$. Let now $D\cont L \difference \cl{u}$ be a connected sublocale such that $\o{x} \inters D\neq \void$. By Proposition \ref{connectedsub}, $D$ is connected also as sublocale of $L\difference (\cl{u} \inters \cl{v})$ and hence $D \cont \o{x}$. So $\o{x}$ is a component of $L \difference \cl{u}$. Since $\o{x}$ is connected, $\o{x} \inters \cl{v} \neq \void$ and $\o{x}=(\o{x} \inters \cl{u}) \join (\o{x} \inters \cl{v})$ with $\o{x} \inters \cl{u}$ and $\o{x} \inters \cl{v}$ disjoint and closed in $\o{x}$ by Proposition \ref{closedofsub}, it must be $\o{x} \inters \cl{u}= \void$. But then $\o{x} \cont \cl{v}$ and so $\closure{\o{x}} \cont \cl{v}$. Since $\closure{\o{x}} \inters \cl{a} \neq \void$ and $\o{x} \inters \cl{a}= \void$, it must be $\bd{x}\inters \cl{a} \neq \void$. But $\bd{x}\inters \cl{a} \neq \void= \bd{\o{x}} \inters \cl{u} \inters \cl{a}$ because $\bd{\o{x}} \cont \cl{u} \join \cl{v}$ and so $\closure{\o{x}}\inters \cl{u} \inters \cl{a}\neq \void$. Analogously, $\closure{\o{x}}\inters \cl{u} \inters \cl{b}\neq \void$. Since $\closure{\o{x}} \inters \cl{u} \cont \cl{a} \join \cl{b}$ and $\cl{a} \inters \cl{b} \neq \void$, we conclude that $\closure{\o{x}}\inters \cl{u}$ is not connected. But $\closure{\o{x}}\inters \cl{u}=\bd{\o{x}}$ because $\o{x} \inters \cl{u}=\void$ and $\bd{\o{x}} \cont \cl{u}$ and so we conclude that $\bd{\o{u}}$ is not a continuum.

\Rom{3} $\implic$ \Rom{4}. We prove that if \Rom{4} does not hold then \Rom{3} does not hold. Let $\cl{u} \cont L$ non-void and let $\o{x}$ and $\o{y}$ be components of $\o{u}$ such that $\bd{\o{x}}=\bd{\o{y}}$ and $\bd{\o{x}}$ is not a continuum. Since $\bd{\o{x}}$ is closed and $\bd{\o{x}}\neq \void$ because otherwise $\o{x}$ would be a non-void clopen of the connected locale $L$ that is not the whole $L$, it must be $\bd{\o{x}}$ not connected. By Lemma \ref{closedsubconnected}, there exist $\cl{a}$ and $\cl{b}$ non-void closed sublocales such that $\bd{\o{x}}= \cl{a} \join \cl{b}$, $\cl{a} \inters \cl{b}=\void$. Since $\o{x} \neq L$, by Proposition \ref{3-4} we have $\bd{\o{x}}\neq \void$ and $\bd{\o{x}} \cont \cl{u}$. Moreover $\closure{\o{x}} \inters \o{y}= (\o{x} \inters \o{y}) \join (\bd{\o{y}} \inters \o{y})=\void$ and so $\closure{\o{x}} \neq L$. Thanks to Proposition \ref{3-3}, we then have that $L \difference \bd{\o{x}}= \o{x} \join \o{x\st}$ gives a separation of $L \difference \bd{\o{x}}$. Moreover, $\o{x}$ is a component of $L \difference \bd{\o{x}}$. Indeed, $\o{x}$ is connected as a sublocale of $L \difference \bd{\o{x}}$ because $\o{u} \cont L \difference \bd{\o{x}}$. And if $D\cont L \difference \bd{\o{x}}$ is connected and $D\inters \o{x}\neq \void$ it must be $D \inters \o{x\st}=\void$ that implies $D\cont \o{x}$. We now observe that \\
$L\difference \o{x}= \bigjoin \set{D\cont L\difference \bd{\o{x}}}{D \text{ component}, D\neq \o{x}, D \neq{\o{y}}} \join \closure{\o{y}}$. \\ \noindent Moreover, by Corollary \ref{3-5}, every component $E$ of $L \difference \bd{\o{x}}$ is such that $\closure{E} \inters \bd{\o{x}}\neq \void$ and hence $\bd{E} \inters \bd{\o{x}}\neq \void$. Since $\bd{\o{x}}=\bd{\o{y}}$ , by Proposition \ref{joinconnected}, $L\difference \o{x}$ is connected. Then $L\difference \o{x}$ and $\closure{\o{x}}$ are continua such that their join is $L$ but $(L\difference \o{x})\inters \closure{\o{x}}=\bd{\o{x}}$ is not connected.

\Rom{4} $\implic$ \Rom{1}. We prove that if \Rom{1} does not hold then \Rom{4} does not hold. So let $X,Y\cont L$ with $X\neq\void\neq Y$ and $\cl{u},\cl{v}\cont L$ be sublocales such that $\cl{u}\inters\cl{v}=\void$ and neither $\cl{u}$ nor $\cl{v}$ separate any element of $X$ (different from 1) from any element of $Y$ (different from 1), but $\cl{u}\join \cl{v}$ separates $X$ and $Y$ in $L$. In particular we have that $X,Y\cont L\difference (\cl{u}\join \cl{v})$. By \corx\ref{corolljoinint}, there exists a component $\o{w}$ of $L\difference (\cl{u}\join \cl{v})$ such that $\o{w}\inters X\neq \void$. Consider then $x\in \o{w}\inters X$ with $x\neq 1$, and $y\in Y$ with $y\neq 1$. Since $\cl{u}\join \cl{v}$ separates $x$ and $y$ in $L$, there exists a separation
		$$L\difference (\cl{u}\join \cl{v})\cont \cl{a}\join \cl{b}$$
		with $(L\difference (\cl{u}\join \cl{v}))\inters \cl{a}\inters\cl{b}=\void$, $(L\difference (\cl{u}\join \cl{v}))\inters \cl{a}\neq \void$ and $(L\difference (\cl{u}\join \cl{v}))\inters \cl{b}\neq \void$, such that $x\in (L\difference (\cl{u}\join \cl{v}))\inters\cl{a}$ and $y\in(L\difference (\cl{u}\join \cl{v}))\inters \cl{b}$. Since $\o{w}$ is connected, either $\o{w}\inters \cl{a}=\void$ or $\o{w}\inters \cl{b}=\void$. As $x\in \o{w}\inters \cl{a}$, we get that $\o{w}\inters \cl{b}=\void$ and $\o{w}\cont \cl{a}$. Notice then that $\closure{\o{w}}\cont \cl{a}\neq L$. By \prox\ref{3-4}, $\bd{\o{w}}\neq \void$ and $\bd{\o{w}}\cont \cl{u}\join \cl{v}$. By \prox\ref{3-3},
		$$L\difference \bd{\o{w}}=\o{w}\join (L\difference \closure{\o{w}})$$
		exhibits a separation, and as a consequence also
		$$L\difference \bd{\o{w}}\cont \closure{\o{w}}\join \closure{L\difference \closure{\o{w}}}$$
		exhibits a separation. Notice that $\bd{\o{w}}$ then separates $X\inters \o{w}$ and $Y\inters \cl{b}$. Indeed
		$$Y\inters \cl{b}\cont (L\difference (\cl{u}\join \cl{v}))\inters \cl{b}\cont L\difference \cl{a}\cont L\difference \closure{\o{w}}.$$
		Consider now $F_1:=\bd{\o{w}}\inters \cl{u}$ and $F_2:=\bd{\o{w}}\inters \cl{v}$. Then $F_1\join F_2=\bd{\o{w}}$. Moreover $F_1\neq \void$. Indeed if we had $\bd{\o{w}}\cont \cl{v}$ we would get
		$$L\difference \cl{v}\cont L\difference \bd{\o{w}}\cont \closure{\o{w}}\join \closure{L\difference \closure{\o{w}}}.$$
		$\o{w}\cont L\difference (\cl{u}\join \cl{v})\cont L\difference \cl{v}$, so $X\inters \o{w}\cont \o{w}\cont (L\difference \cl{v})\inters \closure{\o{w}}$. Moreover $Y\inters \cl{b}\cont Y\cont L\difference (\cl{u}\join \cl{v}) \cont L\difference \cl{v}$, so $Y\inters \cl{b}\cont (L\difference \cl{v})\inters \closure{L\difference \closure{\o{w}}}$. But this would contradict that $\cl{v}$ does not separate any element of $X$ (different from 1) from any element of $Y$ (different from 1). Thus $F_1\neq \void$, and analogously $F_2\neq \void$. Whence $\bd{\o{w}}=F_1\join F_2$ exhibits a separation. Notice then that $F_1$ does not separate any element of $X\inters \o{w}$ (different from 1) from any element of $Y\inters \cl{b}$ (different from 1). If it were otherwise, we would get that $\cl{u}$ separates an element of $X$ (different from 1) from an element of $Y$ (different from 1). Indeed $F_1\cont \cl{u}$, whence $L\difference \cl{u}\cont L\difference F_1$, and $X,Y\cont L\difference (\cl{u}\join \cl{v})\cont L\difference \cl{u}$. Analogously, $F_2$ does not separate any element of $X\inters \o{w}$ (different from 1) from any element of $Y\inters \cl{b}$ (different from 1).
		
		\noindent Let now $\o{z}$ be a component of $L\difference \bd{\o{w}}$ such that $\o{z}\inters Y\inters \cl{b}\neq \void$, which exists by \corx\ref{corolljoinint}. Since $\o{z}$ is connected and $\o{z}\inters (L\difference\closure{\o{w}})\neq\void$, we have $\o{z}\inters \o{w}=\void$. So $\o{z}\cont L\difference \o{w}$ and $\closure{\o{z}}\cont L\difference \o{w}\neq L$. By \prox\ref{3-4}, $\bd{\o{z}}\neq \void$ and $\bd{\o{z}}\cont \bd{\o{w}}$. Moreover by \prox\ref{3-3}
		$$L\difference \bd{\o{z}}=\o{z}\join (L\difference \closure{\o{z}})$$
		exhibits a separation. We can apply the argument above to $\o{z}$ in place of $\o{w}$, using $X\inters \o{w}$ in place of $X$, $Y\inters \cl{b}$ in place of $Y$, $F_1$ in place of $\cl{u}$ and $F_2$ in place of $\cl{v}$. Defining $B_1:=\bd{\o{z}}\inters F_1$ and $B_2:=\bd{\o{z}}\inters F_2$, we obtain a separation $\bd{\o{z}}=B_1\join B_2$, with $B_1\neq\void\neq B_2$. Notice that $\o{w}\cont L\difference \bd{\o{z}}$, because $\o{w}\inters \bd{\o{z}}\cont \o{w}\inters \bd{\o{w}}=\void$. Thanks to \corx\ref{corolljoinint}, there exists a component $\o{w'}$ of $L\difference \bd{\o{z}}$ such that $\o{w'}\inters \o{w}\neq \void$. Since $\o{w}$ is connected, we then have that $\o{w}\cont \o{w'}$. As $\o{w'}$ is connected and $\o{w'}\inters (L\difference \closure{\o{z}})\contain \o{w}\inters (L\difference \closure{\o{z}})\neq \void$, because $\o{w}\cont L\difference \bd{\o{z}}$ and $\o{w}\inters \o{z}=\void$, we get that $\o{w'}\inters \o{z}=\void$. By \prox\ref{3-4}, $\bd{\o{w'}}\cont \bd{\o{z}}$. Moreover $\bd{\o{z}}\cont \bd{\o{w}}\cont \closure{\o{w}}\cont \closure{\o{w'}}$ and $\bd{\o{z}}\inters \o{w'}=\void$, so $\bd{\o{z}}\cont \bd{\o{w'}}$. We conclude that $\bd{\o{z}}= \bd{\o{w'}}$. By the same argument, there exists a component $\o{z'}$ of $L\difference \bd{\o{w'}}=L\difference \bd{\o{z}}$ such that $\o{z}\cont \o{z'}$ and $\bd{\o{z'}}=\bd{\o{w'}}$. Since $\o{z'}\cont L\difference \bd{\o{z}}$ is connected, we obtain that $\o{z'}\cont \o{z}$, whence $\o{z}=\o{z'}$. Therefore $\o{z}$ and $\o{w'}$ are disjoint components of $L\difference \bd{\o{z}}$ with common boundary. But $\bd{\o{z}}=B_1\join B_2$ is not connected, so \Rom{4} does not hold.

\Rom{3} $\implic$ \Rom{5}. Let $C$ and $D$ be disjoint complemented connected sublocales of $L$ such that $\bd{C}\cont \bd{D}$. We want to show that $\bd{C}$ is connected. We can assume that $C\neq \void$ and $C\neq L$, otherwise $\bd{C}=\void$ is trivially connected. Notice that $\closure{C}$ and $\closure{L\difference C}$ are then non-void closed sublocales of $L$ such that $\closure{C}\join  \closure{L\difference C}=L$ and $\closure{C}\inters \closure{L\difference C}=\bd{C}$. Indeed, by \lemx\ref{boundarydiff} $\bd{L\difference C}=\bd{C}$ and then
		$$\closure{C}\inters \closure{L\difference C}=(\interior{C}\join \bd{C})\inters (\interior{L\difference C}\join \bd{L\difference C})=\bd{C}\join \bd{L\difference C}=\bd{C}.$$
		So it suffices to show that $\closure{C}$ and $\closure{L\difference C}$ are connected, because then by \Rom{3} $\closure{C}\inters \closure{L\difference C}=\bd{C}$ is connected. $\closure{C}$ is connected because closure of a connected sublocale. Consider the sublocale
		$$X:=\closure{D}\join \bigjoin\set{E\cont L\difference (\closure{C\join D})}{E \text{ component}}.$$
		This join exists because the $E$'s are all open sublocales. Moreover, by \prox\ref{3-4}, for every component $E$ of $L\difference (\closure{C\join D})$ we have $\bd{E}\neq \void$. Notice that by \prox\ref{componentbd}, \lemx\ref{boundarydiff} and \lemx\ref{bdinters}
		$$\bd{E}\cont \bd{L\difference (\closure{C\join D})}=\bd{\closure{C\join D}}\cont \bd{C\join D}\cont \bd{C}\join \bd{D}=\bd{D}.$$
		So $\closure{E}\inters \closure{D}\contain\bd{E}\inters \bd{D}\neq \void$. By \prox\ref{joinconnected}, we get that $X$ is connected. We prove that $X=\closure{L\difference C}$. Since $L$ is locally connected, $X=\closure{D}\join (L\difference{(\closure{C\join D})})$. And by \lemx\ref{boundarydiff} $\closure{L\difference C}=\interior{L\difference C}\join \bd{L\difference C}=\interior{L\difference C}\join \bd{C}$. Since $C$ and $D$ are disjoint, $D\cont L\difference C$ and thus $\closure{D}\cont \closure{L\difference C}$. Moreover $L\difference (\closure{C\join D})\cont L\difference (C\join D)\cont L\difference C\cont \closure{L\difference C}$. So $X\cont \closure{L\difference C}$. We then have that $\bd{C}\cont \bd{D}\cont \closure{D}\cont X$. And 
		$$\interior{L\difference C}\inters \left(L\difference (\closure{D}\join (L\difference (\closure{C\join D}))) \right)=\interior{L\difference C}\inters \left(L\difference \closure{D}\right) \inters \left(\closure{C\join D}\right)=\void$$
		because since $\bd{C}\cont \bd{D}$
		$$\interior{L\difference C}\inters (L\difference \closure{D})\cont (L\difference C)\inters (L\difference \closure{D})=L\difference (C\join \closure{D})=L\difference (\closure{C}\join \closure{D})\cont L\difference (\closure{C\join D}).$$
		So $\interior{L\difference C}\cont \closure{D}\join (L\difference{(\closure{C\join D})})=X$. Thus $\closure{L\difference C}\cont X$ and we conclude that $\closure{L\difference C}= X$. Since $X$ is connected, we obtain that $\closure{L\difference C}$ is connected.

\Rom{5} $\implic$ \Rom{6}. Let $C$ be a simple sublocale. Then $C$ and $L\difference C$ are disjoint connected sublocales of $L$ and by Lemma \ref{boundarydiff} we have $\bd{C}= \bd{L \difference C}$. So, by \Rom{5}, we conclude that $\bd{C}$ is connected.

\Rom{6} $\implic$ \Rom{7}. Trivial.

\Rom{7} $\implic$ \Rom{2}. Let $D$ be a component of $L\difference \cl{u}$. We prove that $L\difference D$ is connected. If $L=D$ then $L\difference D=\void$ is connected. If $L\neq D$, by Proposition \ref{3-4} $\bd{D}\neq \void$ and $\bd{D} \cont \cl{u}$. Let $\cl{l}$ and $\cl{m}$ be closed sublocales such that $L\difference D\cont \cl{l} \join \cl{m}$ and $(L\difference D) \inters \cl{l} \inters \cl{m} =\void$. Since $\cl{u} \cont L\difference D$ is connected, we can assume that $\cl{u} \cont \cl{l}$ and $\cl{u} \inters \cl{m}=\void$ and so $\bd{D} \inters \cl{m}=\void$. Consider now the closed sublocales $\cl{l} \join \closure{D}$ and $\cl{m} \difference \interior{D}= \cl{m} \difference D$. We have that $(\cl{l} \join \closure{D}) \join  (\cl{m} \difference \interior{D}) = L$ and $(\cl{l} \join \closure{D}) \inters (\cl{m} \difference \interior{D}) =(\cl{l} \inters \cl{m} \inters L\difference D ) \join (\bd{R} \inters \cl{m})=\void$. Since $L$ is connected, it follows that either $\cl{l} \join \closure{D}=\void$ or $\cl{m} \difference \interior{D}=\void$. But $D\neq \void$ and so it must be $\cl{m} \difference \interior{D}=(L\difference D) \inters \cl{m}=\void$. Hence $L \difference D$ is connected and so $D$ is a simple region. By \Rom{7}, we then conclude that $\bd{D}$ is a continuum.

\Rom{5} $\implic$ \Rom{8}. Trivial.

\Rom{8} $\implic$ \Rom{9}. We prove that if \Rom{9} does not hold then \Rom{8} does not hold. So let $A$ and $B$ be regions such that $\bd{A}\inters \bd{B}=\void$ and $A\inters B$ is not connected. We want to construct disjoint regions $D$ and $E$ such that $\bd{D}=\bd{E}$ but $\bd{D}$ is not connected. Let $C$ be a component of $A\inters B$. Since $C$ is a component of an open sublocale, $C$ is open and thus complemented. Then $C\neq A\inters B$ and so $(A\inters B)\difference \closure{C}=(A\inters B)\difference C\neq \void$. Indeed, $C$ is closed in $A\inters B$ and thus by \prox\ref{closedofsub} $C=\closure[A\inters B]{C}=\closure{C}\inters A\inters B$. Let then $D$ be a component of $L\difference\closure{C}$ such that $D\inters ((A\inters B)\difference \closure{C})\neq \void$. Such a $D$ exists thanks to \corx\ref{corolljoinint}, and $D$ is open. Since $C\inters D=\void$ and $C$ is open, by \prox\ref{closureintopen}, $C\inters \closure{D}=\void$. Let then $E$ be a component of $L\difference \closure{D}$ such that $C\inters E\neq \void$, again by \corx\ref{corolljoinint}, so that $C\cont E$ since $C$ is connected. We have that $\bd{D}=\bd{E}$. Indeed by \prox\ref{3-4} $\bd{E}\cont \closure{D}$. Moreover $D\inters E=\void$, so $D\inters \closure{E}=\void$ because $D$ is open (by \prox\ref{closureintopen}) and then $D\inters \bd{E}=\void$. So $\bd{E}\cont \bd{D}$. Then by \prox\ref{3-4} $\bd{D}\cont \closure{C}\cont \closure{E}$. And $\bd{D}\inters E=\void$ because $E\inters \closure{D}=\void$. So $\bd{D}\cont \bd{E}$ and we conclude that $\bd{D}= \bd{E}$. $D$ and $E$ are disjoint regions such that $\bd{D}= \bd{E}$, but $\bd{D}$ is not connected. Indeed $\bd{D}\cont \bd{C}$, since by \prox\ref{3-4} $\bd{D}\cont \closure{C}$ and $\bd{D}\inters C\cont \closure{D}\inters C=\void$. And $\bd{C}\cont \bd{A\inters B}\cont \bd{A}\join \bd{B}$, by \prox\ref{componentbd} and \lemx\ref{bdinters}. We have that $\bd{D}\inters \bd{A}\inters \bd{B}=\void$. Moreover $A\inters D\neq\void\neq B\inters D$ by construction of $D$, $C\cont A\inters (L\difference D)$ and $C\cont B\inters (L\difference D)$. Since $A$ and $B$ are connected, by \prox\ref{propconnormcon} $A\inters \bd{D}\neq \void$ and $B\inters \bd{D}\neq \void$. So $\bd{D}\cont \bd{A}\join \bd{B}$ exhibits a separation of $\bd{D}$, because if we had $\bd{D}\cont \bd{A}$ we would get $\void\neq A\inters \bd{D}\cont A\inters \bd{A}$, which contradicts $A$ being open, and analogously if we had $\bd{D}\cont \bd{B}$. We conclude that \Rom{8} does not hold.

\Rom{9} $\implic$ \Rom{1}. Let $X,Y\cont L$ with $X\neq \void\neq Y$ and $\cl{u},\cl{v}\cont L$ be such that $\cl{u}\inters\cl{v}=\void$ and neither $\cl{u}$ nor $\cl{v}$ separate any element of $X$ (different from 1) from any element of $Y$ (different from 1) in $L$. We prove that $\cl{u}\join \cl{v}$ does not separate $X$ and $Y$ in $L$. We can assume that $\cl{u}\neq\void\neq\cl{v}$, otherwise $\cl{u}\join \cl{v}$ does not separate $X$ and $Y$ in $L$, trivially. We can also assume that $X,Y\cont L\difference (\cl{u}\join \cl{v})=(L\difference \cl{u})\inters (L\difference \cl{v})$, otherwise again $\cl{u}\join \cl{v}$ does not separate $X$ and $Y$, trivially. Since $L\difference \cl{u}$ is open, there exists a component of $L\difference \cl{u}$ that contains an element of $X$ different from 1 and there exists a component of $L\difference \cl{u}$ that contains an element of $Y$ different from 1. Since $\cl{u}$ does not separate any element of $X$ from any element of $Y$, these components must be the same one, that we call $A$. Analogously, since $L\difference \cl{v}$ is open and $\cl{v}$ does not separate any element of $X$ from any element of $Y$, there exists a component $B$ of $L \difference \cl{v}$ that contains an element of $A\inters X$ different from 1 and an element of $A\inters Y$ different from 1. Since by \prox\ref{3-4} $\bd{A} \inters \bd{B} \cont \cl{u} \inters \cl{v}= \void$, by \Rom{9} $A \inters B$ is connected. Moreover, since $(A \inters B) \inters (\cl{u} \join \cl{v})=(A \inters B \inters \cl{u}) \join (A \inters B \inters \cl{v}) = \void$, we have $A \inters B \cont L \difference (\cl{u} \join \cl{v})$. So $A\inters B$ is a connected sublocale of $L \difference (\cl{u} \join \cl{v})$ that contains an element of $X$ and an element of $Y$ and this implies that $\cl{u} \join \cl{v}$ does not separate $X$ and $Y$ in $L$.

\Rom{9} $\implic$ \Rom{10}. Let $A$ and $B$ be open connected sublocales of $L$ such that $L= A \join B$. We need to prove that $A\inters B$ is connected.  Since $A$ is open, $\bd{A} \inters A=\void$ and so $\bd{A} \cont B$. Since $\bd{B} \inters B=\void$, it follows that $\bd{A} \inters \bd{B}=\void$. Then, by \Rom{9}, $A \inters B$ is connected.

\Rom{10} $\implic$ \Rom{9}.
Let $A$ and $B$ be regions such that $\bd{A}\inters \bd{B}=\void$. If $A\inters B=\void$ then it is connected and so we can assume $A\inters B\neq \void$. To prove that $A\inters B$ is connected using open unicoherence, we construct $A\st$ and $B\st$ open connected sublocales of $L$ such that $A \cont A\st$, $B\cont B\st$, $A\st \inters B\st=A \inters B$ and $A\st \join B \st=L$. We define
$$A\st=A \join \left(\bigjoin \set{D\cont L\difference \closure{B}}{ D \text{ component}, D\inters A\neq \void}\right)\v[-2]$$
$$B\st=B \join \left(\bigjoin \set{D\cont L\difference \closure{A}}{ D \text{ component}, D\inters B\neq \void}\right)$$
Clearly $A\cont A\st$ and $B\cont B\st$. Moreover, $A\st$ and $B\st$ are open (since joins of open sublocales) and they are connected by Proposition \ref{joinconnected}. To prove that $A\st \inters B\st=A \inters B$, we first observe that 
$A\st \inters B\st$ is the join of the following sublocales:
\begin{itemize}
\item $A \inters B$;
\item $B \inters (\bigjoin \set{D\cont L\difference \closure{B}}{ D \text{ component}, D\inters A\neq \void})$;
\item $A\inters (\bigjoin \set{D\cont L\difference \closure{A}}{ D \text{ component}, D\inters B\neq \void}) $;
\item $(\bigjoin \set{D\cont L\difference \closure{B}}{ D \text{ component}, D\inters A\neq \void}) \inters$\\
$ (\bigjoin \set{D\cont L\difference \closure{A}}{D \text{ component}, D\inters B\neq \void})$.
\end{itemize}
Since the second and the third sublocales are void and the components involved are open so we can distribute the joins, it suffices to show that given $D\cont L\difference \closure{B}$ component such that $D\inters A\neq \void$ and $D'\cont L\difference \closure{A}$ component such that $D'\inters B\neq \void$, we have $D\inters D'=\void$. Since $D\inters A \neq \void$, $D$ cannot be contained in $L \difference (\closure{A} \join \closure{B})$ and hence $D$ is not contained in  $D'$ and so $D \inters (L\difference D')\neq \void$. Analogously $D'$ is not contained in $D$ and so $D' \inters (L\difference D)\neq \void$. Moreover, since $A\inters B\neq \void$, we have that $D \join D' \cont L \difference (\closure{A} \inters \closure{B})$ is not the entire $L$. By Proposition \ref{compdiffissimple}, $L\difference D$ is connected and so, since $(L\difference D) \inters D'\neq \void$ and $(L\difference D) \inters (L\difference D')=L \difference (D \join D') \neq \void$, thanks to Proposition \ref{propconnormcon} it must be $(L\difference D) \inters \bd{D'} \neq \void$.  Observe now that $\bd{D}\cont \bd{B}$. Indeed $D\neq L$ thanks to the uniqueness of the pseudo-complement, because $\closure{B}\neq \void$. By \prox\ref{3-4}, $\bd{D}\cont \closure{B}=\interior{B}\join \bd{B}$. And $\bd{D}\inters \interior{B}=\void$, because otherwise we would get an element $x\in \bd{D}\inters \interior{B}$ and thus, by \lemx\ref{neighinterior} and \lemx\ref{neighboundary}, an open sublocale $U_x$ containing $x$ such that $U_x\cont B$ and $U_x\inters D\neq \void$. The latter is a contradiction, since $B\inters D=\void$. So $\bd{D}\cont \bd{B}$, and analogously $\bd{D'}\cont \bd{A}$. This implies that $\bd{D} \inters \bd{D'}= \void$ as $\bd{A} \inters \bd{B}= \void$. Since $L\difference D'$ is connected by Proposition\ref{compdiffissimple}, by Lemma \ref{lemmasimplebd} $\bd{D'}$ is normally connected. Since $\bd{D} \inters \bd{D'}=\void$ and $(L\difference D) \inters \bd{D'} \neq \void$, by Proposition \ref{propconnormcon} it must be $D\inters \bd{D'}=\void$. Since $D$ is connected and $D \inters (L\difference D') \neq \void$, by Proposition \ref{propconnormcon}, we must have $D\inters D'=\void$. Hence $A\st \inters B\st=A \inters B$.

It remains to prove that $A \st \join B\st =L$. We first observe that $\bd{A} \cont L\difference B$ since $\bd{A} \inters \bd{B}= \void$ and hence $\bd{A} \inters (L\difference(A \join B))\cont L\difference \closure{B}$. So $\bd{A}\inters   (L\difference(A \join B))$ is covered by components of $L\difference \closure{B}$. Since the components of $L\difference \closure{B}$ are open sublocales, by Lemma \ref{neighboundary}, the ones that intersect $\bd{A}$ must intersect $A$ as well and so $\bd{A} \inters (L\difference(A \join B))\cont A\st$. On the other hand, $(L \difference \bd{A}) \inters (L \difference (A\join B)) \cont L\difference \closure{A}$ and so $(L \difference \bd{A}) \inters (L \difference (A\join B))$ is covered by the components of $L\difference \closure{B}$. Let $D$ be a component of $L\difference \closure{A}$. If $D\inters B\neq\void$, then $D \cont B\st$. If $D\inters B =\void$, then by Proposition \ref{closureintopen}, $D \inters \closure{B}=\void$ and so $D \cont (L\difference \closure{A}) \inters (L\difference \closure{B})$. Then by \corx\ref{corolljoinint}, since $D$ is connected, $D\cont E$ with $E$ component of $L\difference \closure{B}$. By the argument above, $\bd{D}\cont \bd{A}$ and $\bd{E}\cont \bd{B}$. Since $\bd{A} \inters \bd{B}=\void$, $D$ cannot be equal to $E$ and hence $E$ is not contained in $L\difference \closure{A}$ (because $D$ is a component of $L\difference \closure{A}$). This implies that $E\inters A\neq\void$ and so $D\cont A\st$. We have then shown that $D\cont A\st \join B\st$ and so $(L \difference \bd{A}) \inters (L \difference (A\join B))\cont A\st \join B\st$. Since we had already proved that $\bd{A} \inters (L \difference (A\join B))\cont A\st \join B\st$, we have shown that $L \difference (A\join B) \cont A\st \join B\st$. But $A \join B \cont A\st \join B\st$ by construction and so $A \st \join B\st =L$.
\end{proof}

\begin{teor}\label{thmstrloccon}
    Let $L$ be a connected and strongly locally connected locale. The following properties are equivalent:
    \begin{enumerate}[itemsep=2.5mm]
\item[\Romplus{1}] Whenever $x,y\in L$ with $x\neq 1\neq y$ and $\cl{u},\cl{v}\cont L$ are such that $\cl{u}\inters\cl{v}=\void$ and neither $\cl{u}$ nor $\cl{v}$ separate $x$ and $y$ in $L$, we have that $\cl{u}\join \cl{v}$ does not separate $x$ and $y$ in $L$;
\item[\Rom{1}] Whenever $X,Y\cont L$ with $X\neq \void\neq Y$ and $\cl{u},\cl{v}\cont L$ are such that $\cl{u}\inters\cl{v}=\void$ and neither $\cl{u}$ nor $\cl{v}$ separate any element of $X$ (different from 1) from any element of $Y$ (different from 1) in $L$, we have that $\cl{u}\join \cl{v}$ does not separate $X$ and $Y$ in $L$;
\item[\Romprime{1}](Phragmen-Brouwer Property) Whenever $\cl{u},\cl{v}\cont L$ are such that $\cl{u}\inters\cl{v}=\void$ and neither $\cl{u}$ nor $\cl{v}$ separate $L$, we have that $\cl{u}\join \cl{v}$ does not separate $L$.
    \end{enumerate}
    Moreover, \Romplus{1} $\implic$ \Rom{1} $\implic$ \Romprime{1} actually holds for any connected and locally connected locale $L$, not necessarily strongly locally connected.
\end{teor}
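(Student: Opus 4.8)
The plan is to first prove $\Romplus{1}\implic\Rom{1}\implic\Romprime{1}$ for an arbitrary connected, locally connected locale (which already establishes the ``moreover'' clause), and then to close the cycle by proving $\Romprime{1}\implic\Rom{1}$ under the additional hypothesis of strong local connectedness. For the last step I would invoke \thex\ref{mainthm}: by \prox\ref{strlocconisloccon} a strongly locally connected locale is locally connected, so \Rom{1} is equivalent to unicoherence \Rom{3}, and it is enough to show that if $L$ is not unicoherent then \Romprime{1} fails. Throughout, the key elementary fact I would use is that a complemented sublocale $S$ separates $L$ exactly when $L\difference S$ is disconnected, so that ``$S$ separates $x$ from $y$ in $L$'' always forces ``$S$ separates $L$''.

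\textbf{The two easy implications.} For $\Romplus{1}\implic\Rom{1}$ I argue by contraposition: if \Rom{1} fails, fix witnessing $X,Y$ and $\cl{u},\cl{v}$; since the non-void sublocales $X,Y$ differ from the void sublocale $\{1\}$, choose $x\in X$ and $y\in Y$ with $x\neq 1\neq y$. As $\cl{u}\join\cl{v}$ separates $X$ and $Y$ it separates this $x$ from this $y$, while by hypothesis neither $\cl{u}$ nor $\cl{v}$ does; hence \Romplus{1} fails. For $\Rom{1}\implic\Romprime{1}$, again by contraposition: if \Romprime{1} fails there are disjoint $\cl{u},\cl{v}$, neither separating $L$, with $L\difference(\cl{u}\join\cl{v})$ disconnected; fix a separation $L\difference(\cl{u}\join\cl{v})\cont\cl{a}\join\cl{b}$ and elements $x\neq 1$ in $(L\difference(\cl{u}\join\cl{v}))\inters\cl{a}$ and $y\neq 1$ in $(L\difference(\cl{u}\join\cl{v}))\inters\cl{b}$. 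Taking $X=\{x\}$ and $Y=\{y\}$, the sublocale $\cl{u}\join\cl{v}$ separates $X$ and $Y$, while neither $\cl{u}$ nor $\cl{v}$ separates $x$ from $y$ (otherwise it would separate $L$). Thus \Rom{1} fails. Both arguments are purely formal and use nothing beyond local connectedness, which is exactly the content of the ``moreover'' part.

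\textbf{Setting up the remaining implication.} Assume $L$ is not unicoherent, say $L=\cl{u}\join\cl{v}$ with $\cl{u},\cl{v}$ continua and $\cl{u}\inters\cl{v}$ not a continuum. Since $L$ is connected and $\cl{u},\cl{v}\neq\void$, we have $\cl{u}\inters\cl{v}\neq\void$, so it is a non-void disconnected closed sublocale; note also $\cl{u}\neq L\neq\cl{v}$, as otherwise $\cl{u}\inters\cl{v}$ would be a continuum. By \lemx\ref{closedsubconnected} write $\cl{u}\inters\cl{v}=\cl{a}\join\cl{b}$ with $\cl{a},\cl{b}$ disjoint non-void closed sublocales; this pair should witness the failure of \Romprime{1}. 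First, $\cl{a}\join\cl{b}=\cl{u}\inters\cl{v}$ separates $L$: the identity $L=\cl{u}\join\cl{v}$ forces $u\meet v=0$, so $\o{u}$ and $\o{v}$ are disjoint and non-void, and $L\difference(\cl{u}\inters\cl{v})=\o{u}\join\o{v}$ exhibits a separation. It then remains to show that neither $\cl{a}$ nor $\cl{b}$ separates $L$. Using that open sublocales distribute over joins (\lemx\ref{lemmajoinopens}) one computes $L\difference\cl{a}=(\cl{u}\difference\cl{v})\join(\cl{v}\difference\cl{u})\join\cl{b}$, and symmetrically for $\cl{b}$, where the first two joinands are non-void and separated from one another, so everything hinges on whether $\cl{b}$ bridges them.

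\textbf{The main obstacle.} The crux of the argument, and the only place where strong local connectedness is genuinely needed, is to prove that $L\difference\cl{a}=(\cl{u}\difference\cl{v})\join(\cl{v}\difference\cl{u})\join\cl{b}$ is connected. The obstruction is that $\cl{b}$, being only a ``half'' of the disconnected sublocale $\cl{u}\inters\cl{v}$, need not itself be connected, so \prox\ref{joinconnected} cannot be applied directly; one likely has to first replace $\cl{a},\cl{b}$ by a more carefully chosen complementary pair inside $\cl{u}\inters\cl{v}$ so that each half accumulates onto both $\cl{u}\difference\cl{v}$ and $\cl{v}\difference\cl{u}$. Granting this, a hypothetical separation $L\difference\cl{a}=P\join Q$ would be ruled out as follows: picking $x\neq 1$ in $\cl{b}$ and, using strong local connectedness, an open connected sublocale $\o{w}$ with $x\in\o{w}$, the point $x$ lies in the closures of both $\cl{u}\difference\cl{v}$ and $\cl{v}\difference\cl{u}$, so by \prox\ref{closureintopen} $\o{w}$ meets both; thus the connected $\o{w}$ cannot be split between $P$ and $Q$, and running this over $\cl{b}$ forces $P=\void$ or $Q=\void$. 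The routine part of the write-up is the boundary bookkeeping (the computation of $L\difference\cl{a}$ via \lemx\ref{lemmajoinopens}, together with \lemx\ref{boundarydiff}, \lemx\ref{bdinters}, \lemx\ref{neighinterior}, \lemx\ref{neighboundary} and \prox\ref{3-3}); the delicate part, for which strong local connectedness is indispensable, is the refinement of $\cl{a},\cl{b}$ and the neighbourhood argument just sketched.
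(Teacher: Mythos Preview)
Your plan has two genuine gaps.

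First, a structural one: proving \Romplus{1} $\implic$ \Rom{1} $\implic$ \Romprime{1} and then \Romprime{1} $\implic$ \Rom{1} does \emph{not} close the cycle --- you still lack \Rom{1} $\implic$ \Romplus{1}, and this implication is not formal. In particular, the move ``take $X=\{x\}$, $Y=\{y\}$'' is illegitimate: a singleton $\{x\}$ need not be a sublocale, so it cannot be fed into \Rom{1}. (The same caveat applies to your argument for \Rom{1} $\implic$ \Romprime{1}; the paper instead takes $X$ and $Y$ to be the two halves $(L\difference(\cl{u}\join\cl{v}))\inters\cl{l}$ and $(L\difference(\cl{u}\join\cl{v}))\inters\cl{m}$ of the separation, which \emph{are} sublocales.) The paper obtains \Rom{1} $\implic$ \Romplus{1} separately, by showing \Rom{4} $\implic$ \Romplus{1}: it reruns the contrapositive proof of \Rom{4} $\implic$ \Rom{1} from \thex\ref{mainthm}, but now, instead of using \corx\ref{corolljoinint} to find components merely \emph{meeting} $X$ and $Y$, it uses strong local connectedness (via \prox\ref{strlocconisloccon}) to find components actually \emph{containing} the prescribed elements $x$ and $y$.

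Second, your attack on \Romprime{1} $\implic$ \Rom{1} via ``not \Rom{3} $\implic$ not \Romprime{1}'' is, as you concede, not carried through, and the heuristic you offer fails: there is no reason an arbitrary $x\in\cl{b}\cont\cl{u}\inters\cl{v}$ should lie in the closure of $\cl{u}\difference\cl{v}$ (if $\cl{u}\inters\cl{v}$ has nonempty interior, interior points of $\cl{b}$ need not). The paper takes a completely different route. It proves the contrapositive ``not \Rom{1} $\implic$ not \Romprime{1}'' by first rerunning the entire construction from the proof of \Rom{4} $\implic$ \Rom{1} in \thex\ref{mainthm}, arriving at disjoint components $\o{z}$, $\o{w'}$ of $L\difference\bd{\o{z}}$ with common disconnected boundary $\bd{\o{z}}=B_1\join B_2$. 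It then sets
\[
B_i':=B_i\join\bigjoin\set{\o{l}\text{ component of }L\difference\bd{\o{z}}}{\bd{\o{l}}\cont B_i}\qquad(i=1,2),
\]
and shows that $B_1',B_2'$ are disjoint \emph{closed} sublocales, that neither separates $L$ (every component of $L\difference B_i'$ must meet $B_{3-i}\cont\closure[L\difference B_i']{\o{w'}}$, which is connected), yet $B_1'\join B_2'$ does separate $L$ (since $\o{z}$ and $\o{w'}$ remain disjoint components of its complement). Strong local connectedness enters exactly once, in proving that each $B_i'$ is closed, via \lemx\ref{bdinfinitejoin} bounding the boundary of an infinite join of opens --- this is what your ``refinement of $\cl{a},\cl{b}$'' would have had to reproduce, and it is far from the neighbourhood argument you sketch.
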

\begin{proof}
\Romplus{1} $\implic$ \Rom{1}. Let $X,Y\cont L$ with $X\neq \void\neq Y$ and let $\cl{u},\cl{v}\cont L$ be closed sublocales such that $\cl{u}\inters\cl{v}=\void$ and neither $\cl{u}$ nor $\cl{v}$ separate any element of $X$ (different from 1) from any element of $Y$ (different from 1) in $L$. Let $x\in X$ and $y\in Y$ with $x\neq 1$ and $y\neq 1$. Since neither $\cl{u}$ nor $\cl{v}$ separate $x$ and $y$, by \Romplus{1} $\cl{u}\join\cl{v}$ does not separate $x$ and $y$. Whence, $\cl{u}\join\cl{v}$ does not separate $X$ and $Y$.

 \Rom{1} $\implic$ \Romprime{1}. Let $\cl{u},\cl{v}\cont L$ be such that $\cl{u}\inters\cl{v}=\void$ and $\cl{u}\join \cl{v}$ separates $L$. We prove that either $\cl{u}$ or $\cl{v}$ separate $L$. Let $L\difference (\cl{u} \join \cl{v}) \cont \cl{l} \join \cl{m}$ be a separation of $L\difference (\cl{u} \join \cl{v})$. Let then $X=(L\difference (\cl{u} \join \cl{v})) \inters \cl{l}$ and $Y=(L\difference (\cl{u} \join \cl{v})) \inters \cl{m}$. Since $\cl{l}$ and $\cl{m}$ give a separation of $L\difference (\cl{u} \join \cl{v})$, we have $X\neq \void$ and $Y\neq \void$. Then $\cl{u}\inters\cl{v}$ separates $X$ from $Y$ by construction and so by \Rom{1} there exist $x\in X$ and $Y\in Y$ with $x\neq 1$ and $y\neq 1$ such that either $\cl{u}$ separates $x$ and $y$ or $\cl{v}$ does so. Whence either $\cl{u}$ separates $L$ or $\cl{v}$ does so.
    
    \Rom{4} $\implic$ \Romplus{1}, so that \Romplus{1} $\iff$ \Rom{1}. Now that $L$ is strongly locally connected, we can improve the proof that \Rom{4} $\implic$ \Rom{1} of \thex\ref{mainthm}. We prove that if \Romplus{1} does not hold, then \Rom{4} does not hold. So let $x,y\in L$ with $x\neq 1 \neq y$ and $\cl{u},\cl{v}\cont L$ be sublocales such that $\cl{u}\inters \cl{v}=\void$ and neither $\cl{u}$ nor $\cl{v}$ separate $x$ and $y$ in $L$ but $\cl{u}\join \cl{v}$ separates $x$ and $y$ in $L$. In particular we have that $x,y\in L\difference (\cl{u}\join \cl{v})$. Since $L$ is strongly locally connected, by \prox\ref{strlocconisloccon}, there exists a component $\o{w}$ of $L\difference (\cl{u}\join \cl{v})$ such that $x\in \o{w}$. The rest of the proof proceeds as the proof of \Rom{4} $\implic$ \Rom{1} of \thex\ref{mainthm}. $\bd{\o{w}}$ separates $x$ and $y$ in $L$. Whereas $F_1$ and $F_2$ do not separate $x$ and $y$ in $L$, otherwise $\cl{u}$ or $\cl{v}$ would separate $x$ and $y$ in $L$. We can then take $\o{z}$ to be the component of $L\difference \bd{\o{w}}$ such that $y\in \o{z}$, by \prox\ref{strlocconisloccon}, using again that $L$ is strongly locally connected. We then still use $x$ and $y$ rather than $X\inters \o{w}$ and $Y\inters \cl{b}$, to find the separation $\bd{\o{z}}=B_1\join B_2$. We 
		then follow the proof of \Rom{4} $\implic$ \Rom{1} to the end, arriving to the conclusion that \Rom{4} does not hold.

    \Romprime{1} $\implic$ \Rom{1}. We prove that if \Rom{1} does not hold then \Romprime{1} does not hold. We first repeat the whole proof of \Rom{4} $\implic$ \Rom{1} of \thex\ref{mainthm}. So, starting from the assumption that \Rom{1} does not hold, we build the disjoint components $\o{z}$ and $\o{w'}$ of $L\difference \bd{\o{z}}$ with common boundary. Remember that we have a separation $\bd{\o{z}}=B_1\join B_2$, and that $B_1$ and $B_2$ are closed. We define
		$$B_1':=B_1\join \left(\bigjoin\set{\o{l} \text{ component of } L\difference \bd{\o{z}}}{\bd{\o{l}}\cont B_1}\right)$$
		$$B_2':=B_2\join \left(\bigjoin\set{\o{m} \text{ component of } L\difference \bd{\o{z}}}{\bd{\o{m}}\cont B_2}\right)$$
		Then $B_1'$ is closed, because by \lemx\ref{bdinters} and \lemx\ref{bdinfinitejoin}
		$$\bd{B_1'}\cont \bd{B_1}\join \bd{\bigjoin\fami{\o{l}}{l}}\cont B_1\join \left(\closure{\bigjoin\fami{\bd{\o{l}}}{l}}\right)\cont B_1\join \closure{B_1}=B_1\cont B_1'.$$
		Analogously, $B_2'$ is closed. Notice then that $B_1'\inters B_2'=\void$, thanks to \lemx\ref{lemmajoinopens}. Indeed, if we had that $\o{l}\inters \o{m}\neq \void$ for some components $\o{l}$ and $\o{m}$ as above, we would get $\o{l}=\o{m}$, contradicting that $B_1\inters B_2=\void$, since by \prox\ref{3-4} $\bd{\o{l}}\neq \void$.
		
		\noindent We prove that neither $B_1'$ nor $B_2'$ separate $L$, whereas $B_1'\join B_2'$ separates $L$. Consider $C$ a component of $L\difference B_1'$. Consider then an arbitrary component $C'$ of $L\difference \bd{\o{z}}$. By \prox\ref{3-4}, $\bd{C'}\neq \void$ and $\bd{C'}\cont B_1\join B_2$. If $\bd{C'}\inters B_2=\void$, then $\bd{C'}\cont B_1$ and so $C'\cont B_1'$. If $\bd{C'}\inters B_1=\void$, then $\bd{C'}\cont B_2$ and so $C'\cont B_2'\cont L\difference B_1'$. If $\bd{C'}\inters B_1\neq\void\neq\bd{C'}\inters B_2$, then $C'\inters B_1'=\void$ and so again $C'\cont L\difference B_1'$. Notice that it cannot happen that $C\cont C'$. Indeed, this would mean that either $C\cont C'\cont B_1'$, which contradicts the fact that $C\cont L\difference B_1'$, or $C\cont C'\cont L\difference B_1'$. In this second case, we would get that $C=C'$ because $C$ is a component. Whence by \prox\ref{3-4} $\bd{C}\cont B_1'\inters \bd{\o{z}}=B_1$ and we would have $C\cont B_1'$, contradicting again that $C\cont L\difference B_1'$. We can then conclude that $C\inters B_2\neq \void$. Indeed, if we had $C\inters B_2=\void$ then we would have $C\cont L\difference B_1$ and $C\cont L\difference B_2$, so $C\cont (L\difference B_1)\inters (L\difference B_2)=L\difference (B_1\join B_2)=L\difference \bd{\o{z}}$. By \corx\ref{corolljoinint}, there would then exists a component $C'$ of $L\difference \bd{\o{z}}$ such that $C'\inters C\neq\void$. And since $C$ is connected, $C\cont C'$, which cannot happen by the argument above. So every component $C$ of $L\difference B_1'$ needs to intersect $B_2$. Notice that $\o{w'}\inters B_1'=\void$ and so $\o{w'}\cont L\difference B_1'$, because $\o{w'}$ is an open component and its boundary is not contained in $B_1$. Notice then that $B_2\cont \bd{\o{w'}}\cont \closure{\o{w'}}$ and $B_2\cont L\difference B_1'$. So by \prox\ref{closedofsub} $B_2\cont \closure[L\difference B_1']{\o{w'}}$. But $\closure[L\difference B_1']{\o{w'}}$ is connected in $L\difference B_1'$. And so every component $C$ of $L\difference B_1'$ needs to intersect the same connected sublocale $\closure[L\difference B_1']{\o{w'}}$ of $L\difference B_1'$. We conclude that $L\difference B_1'$ is connected. Analogously, $L\difference B_2'$ is connected. But $L\difference (B_1'\join B_2')$ is not connected, since $\o{w'}$ and $\o{z}$ are disjoint components of $L\difference (B_1'\join B_2')$. Indeed we said above that $\o{w'}\cont L\difference B_1'$. Analogously, $\o{w'}\cont L\difference B_2'$ and also $\o{z}\cont (L\difference B_1')\inters (L\difference B_2')=L\difference (B_1'\join B_2')$. So \Rom{1} does not hold.
\end{proof}

\begin{rem}
    For topological spaces, there is no difference between \Romplus{1} and \Rom{1}, as we can always consider the singleton subspace consisting of a point. But for sublocales, the property that works best seems to be $\Rom{1}$. Analogously with the properties $(N^+)$ and $(N)$ below.
\end{rem}

\begin{teor}\label{thmnormal}
    Let $L$ be a connected and locally connected locale. Consider the following properties:
    \begin{enumerate}[itemsep=2.5mm]
\item[$(N^+)$] If $\cl{u},\cl{v}\cont L$ are disjoint and $x\in \cl{u}$ and $y\in \cl{v}$ are such that $x\neq 1 \neq y$, then there exists a continuum $\cl{w}\cont L\difference (\cl{u}\join \cl{v})=\o{u\meet v}$ which separates $x$ and $y$ in $L$;
\item[$(N)$] If $\cl{u},\cl{v}\cont L$ are disjoint and $X\cont\cl{u}$ and $Y\cont \cl{v}$ are such that $X\neq\void \neq Y$, then there exists a continuum $\cl{w}\cont L\difference (\cl{u}\join \cl{v})=\o{u\meet v}$ which separates an element of $X$ (different from 1) from an element of $Y$ (different from 1) in $L$.
    \end{enumerate}
Then $(N^+)\implic(N)\implic \Rom{3}$.

Moreover, if $L$ is also normal then $\Rom{2}\implic (N)$, so that $(N)$ is equivalent to unicoherence.

If $L$ is also normal and strongly locally connected, then $\Rom{2}\implic (N^+)$, so that $(N^+)\iff (N)$ is equivalent to unicoherence (and equivalent to \Romplus{1}).
\end{teor}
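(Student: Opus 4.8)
The plan is to prove the chain $(N^+)\implic(N)\implic\Rom{3}$ unconditionally, then the two conditional implications $\Rom{2}\implic(N)$ (using normality) and $\Rom{2}\implic(N^+)$ (using normality and strong local connectedness); the various ``equivalent to unicoherence / to \Romplus{1}'' assertions then follow by splicing these with the equivalences already proved in Theorem~\ref{mainthm} ($\Rom{1}\iff\Rom{2}\iff\Rom{3}$) and Theorem~\ref{thmstrloccon} ($\Romplus{1}\iff\Rom{1}$, under strong local connectedness). The implication $(N^+)\implic(N)$ is immediate: given disjoint closed $\cl{u},\cl{v}$ and non-void sublocales $X\cont\cl{u}$, $Y\cont\cl{v}$, choose $x\in X$ and $y\in Y$ with $x\neq1\neq y$ and apply $(N^+)$ to the pair $x,y$; the continuum it produces separates this element of $X$ from this element of $Y$.

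For $(N)\implic\Rom{3}$ I would argue by contraposition. If $L$ is not unicoherent, there are continua $H=\cl{h}$ and $K=\cl{k}$ with $L=H\join K$ and $H\inters K$ not a continuum. From $L=\cl{h}\join\cl{k}=\cl{h\meet k}$ we get $h\meet k=0$, so $L\difference(H\inters K)=\o{h\join k}=\o{h}\join\o{k}$ by Lemma~\ref{lemmajoinopens}, with $\o{h}\inters\o{k}=\o{0}=\void$; and connectedness of $L$ forces $H\inters K\neq\void$, so this closed sublocale is non-void and not connected, hence by Lemma~\ref{closedsubconnected} splits as $H\inters K=\cl{a}\join\cl{b}$ with $\cl{a},\cl{b}$ disjoint and non-void. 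Now put $\cl{u}=X=\cl{a}$ and $\cl{v}=Y=\cl{b}$: any continuum $\cl{w}\cont L\difference(\cl{a}\join\cl{b})=\o{h}\join\o{k}$ is connected, hence (again Lemma~\ref{lemmajoinopens}) contained in $\o{h}=L\difference H$ or in $\o{k}=L\difference K$, say the former; then $\cl{a},\cl{b}\cont H\cont L\difference\cl{w}$ with $H$ connected, so no separation of $L\difference\cl{w}$ can place an element $\neq1$ of $\cl{a}$ and an element $\neq1$ of $\cl{b}$ in different pieces (Lemma~\ref{closedsubconnected}), and symmetrically in the other case. Thus $(N)$ fails.

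For $\Rom{2}\implic(N)$ with $L$ normal, start from disjoint closed $\cl{u},\cl{v}$ and non-void sublocales $X\cont\cl{u}$, $Y\cont\cl{v}$. Normality yields disjoint open sublocales $\o{p}\contain\cl{u}$, $\o{q}\contain\cl{v}$; since $\o{p}\inters\o{q}=\void$ we have $\closure{\o{p}}\cont L\difference\o{q}$, so $\closure{\o{p}}\inters\cl{v}=\void$. Pick a component $V$ of $\o{p}$ with $V\inters X\neq\void$ (Corollary~\ref{corolljoinint}); then $V$ is open, $V\neq L$, and $\closure{V}$ is a continuum contained in $\closure{\o{p}}$. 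Since $Y\cont\cl{v}\cont L\difference\closure{\o{p}}\cont L\difference\closure{V}$, pick a component $W$ of $L\difference\closure{V}$ with $W\inters Y\neq\void$ (Corollary~\ref{corolljoinint} again). By $\Rom{2}$ applied to the continuum $\closure{V}$, the boundary $\bd{W}$ is a continuum, non-void by Proposition~\ref{3-4}. It avoids $\cl{u}\join\cl{v}$: by Proposition~\ref{componentbd} and Lemma~\ref{boundarydiff}, $\bd{W}\cont\bd{L\difference\closure{V}}=\bd{\closure{V}}\cont\bd{V}\cont L\difference\o{p}$, the middle inclusion holding because $V\cont\interior{\closure{V}}$, so $\bd{W}\inters\cl{u}=\void$; and $\bd{W}\cont\closure{V}\cont\closure{\o{p}}$ gives $\bd{W}\inters\cl{v}=\void$. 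Finally, since $V\cont L\difference\closure{W}$ by Proposition~\ref{closureintopen} and $V\neq\void$, we have $\closure{W}\neq L$, so by Proposition~\ref{3-3} the decomposition $L\difference\bd{W}=W\join(L\difference\closure{W})$ exhibits a separation; choosing $x\in V\inters X$ and $y\in W\inters Y$ with $x\neq1\neq y$ puts $x$ in $L\difference\closure{W}$ and $y$ in $W$, so $\cl{w}:=\bd{W}$ is the continuum required by $(N)$.

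The conditional implication $\Rom{2}\implic(N^+)$, assuming in addition strong local connectedness, is the same construction with one change: by Proposition~\ref{strlocconisloccon} the prescribed $x\in\cl{u}\cont\o{p}$ already lies in a component $V$ of $\o{p}$, and the prescribed $y\in\cl{v}\cont L\difference\closure{V}$ already lies in a component $W$ of $L\difference\closure{V}$, so the very same $\bd{W}$ separates the prescribed pair $x,y$. I expect the main obstacle to be the $\Rom{2}\implic(N)$ step: the Brouwer property itself is cheap here, but it takes care to check that the continuum $\bd{W}$ it hands back lies inside $L\difference(\cl{u}\join\cl{v})$ \emph{and} genuinely separates a chosen point of $X$ from a chosen point of $Y$, which forces one to chain together Propositions~\ref{3-4},~\ref{3-3},~\ref{componentbd}, \ref{closureintopen} and Lemma~\ref{boundarydiff}, and to extract from normality a closed neighbourhood of $\cl{u}$ with exactly the separation property the argument consumes.
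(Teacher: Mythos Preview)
Your proposal is correct and follows essentially the same approach as the paper's proof: the same contrapositive for $(N)\implic\Rom{3}$ (splitting $H\inters K$ and trapping any candidate continuum inside $\o{h}$ or $\o{k}$ so that the connected $H$ or $K$ prevents separation), and the same normality-plus-Brouwer construction of $\bd{W}$ for $\Rom{2}\implic(N)$ and its strongly-locally-connected refinement for $\Rom{2}\implic(N^+)$. The only cosmetic slip is the second citation of Lemma~\ref{closedsubconnected} in the $(N)\implic\Rom{3}$ step---what you actually use there is just the connectedness of $H\cont L\difference\cl{w}$, not that lemma.
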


\begin{proof}
$(N^+)$ $\implic$ (N). Let $\cl{u},\cl{v}\cont L$ be disjoint and  let $X\cont\cl{u}$ and $Y\cont \cl{v}$ with $X\neq\void \neq Y$. Let then $x$ be an element of $X$ with $x\neq 1$ and $y$ be an element of $Y$ with $y\neq 1$. By $(N^+)$, there exists a continuum $\cl{w}\cont L\difference (\cl{u}\join \cl{v})=\o{u\meet v}$ which separates $x$ and $y$ in $L$. So $\cl{w}\cont L\difference (\cl{u}\join \cl{v})=\o{u\meet v}$ is a continuum which separates an element of $X$ (different from 1) from an element of $Y$ (different from 1) in $L$.

(N) $\implic$ \Rom{3}. We prove that if \Rom{3} does not hold then (N) does not hold. Let $\cl{u}$ and $\cl{v}$ be continua such that $L=\cl{u} \join \cl{v}$ but $\cl{u} \inters \cl{v}$ is not a continuum. Since $\cl{u} \inters \cl{v}$ is clearly closed and it cannot be $\cl{u} \inters \cl{v}= \void$ because $L$ is connected, it must be $\cl{u} \inters \cl{v}$ not connected. By Lemma \ref{closedsubconnected}, there exist $\cl{a}$ and $\cl{b}$ non-void such that $\cl{u} \inters \cl{v}= \cl{a} \join \cl{b}$, $\cl{a} \inters \cl{b}=\void$, $\cl{a} \inters \cl{u} \inters \cl{v} \neq \void$ and $\cl{b} \inters \cl{u} \inters \cl{v} \neq \void$. Let $X=\cl{a} \inters \cl{u} \inters \cl{v}\cont \cl{a}$ and $Y=\cl{b} \inters \cl{u} \inters \cl{v}\cont \cl{b}$ and consider a continuum $\cl{w}\cont L \difference (\cl{a} \join \cl{b})=L\difference (\cl{u} \inters \cl{v})$ that separates $L$. We show that $\cl{w}$ cannot separate any element $x$ of $X$ (different from 1) from any element $y$ of $Y$ (different from 1). Consider such arbitrary $x$ and $y$. Notice that $\cl{w}\cont L=\cl{u}\join \cl{v}$ and $\cl{w}\inters \cl{u}\inters \cl{v}\cont (L\difference (\cl{u}\inters \cl{v}))\inters \cl{u}\inters \cl{v}=\void$. Since $\cl{w}$ is connected, either $\cl{w}\inters \cl{u}=\void$ or $\cl{w}\inters \cl{v}=\void$. Without loss of generality, $\cl{w}\inters \cl{u}=\void$. Then $\cl{u}\cont L\difference \cl{w}$. Therefore $x,y\in \cl{u}$ belong to the same connected sublocale of $L\difference \cl{w}$, and so cannot be separated by $\cl{w}$. We conclude that (N) does not hold.

\Rom{2} $\implic$ (N), assuming $L$ is normal. Let $\cl{a}, \cl{b}\cont L$ be disjoint closed sublocales and let $X\cont \cl{a}$ and $Y\cont \cl{b}$ be such that $X\neq \void \neq Y$. We need to prove that there exists a continuum contained in $L\difference (\cl{a} \join \cl{b})$ which separates an element of $X$ from an element of $Y$. Since $L$ is normal, there exist $\o{u},\o{v} \cont L$ disjoint open sublocales such that $\cl{a} \cont \o{u}$ and $\cl{b} \cont \o{v}$. Since $X\inters \o{u}=X\neq \void$, by Corollary \ref{corolljoinint}, there exists a component $C$ of $\o{u}$ such that $X\inters C\neq \void$. Let $x$ be an element of $X\inters C$ such that $x\neq 1$. We now observe that $Y \inters (L\difference \closure{C}) \neq \void$. Indeed, since $\o{v} \inters \o{u}=\void$ and $\o{v}$ is open, by Proposition \ref{closureintopen}, it must be $\o{v} \inters \closure{\o{u}}=\void$ and so $Y\cont \o{v} \cont L \difference \closure{\o{u}} \cont L \difference \closure{C}$. Then, by Corollary \ref{corolljoinint}, there exists a component $D$ of $L \difference \closure{C}$ such that $D \inters Y \neq \void$. Let $y$ be an element of $D \inters Y$ such that $y\neq 1$. Since $C$ is connected, also $\closure{C}$ is connected and hence it is a continuum. So, by \Rom{2}, we have that $\bd{D}$ is a continuum. We observe that $\closure{D} \neq L$ because $\closure{D} \inters C=\void$ by Proposition \ref{closureintopen} since $C\inters D=\void$. Moreover, $\bd{D} \cont L\difference (\cl{a} \join \cl{b})$. Indeed, since $\bd{D} \cont \closure{C}$ by Proposition \ref{3-4} and $\bd{D} \inters C \cont \closure{D} \inters C=\void$, we have $\bd{D} \cont \bd{C}$. But $\bd{C} \cont L \difference \o{u} \cont L\difference \cl{a}$ by Proposition \ref{3-4} and $\bd{C} \cont \closure{C} \cont \closure{\o{u}} \cont L \difference \o{v} \cont L \difference \cl{b}$. So $\bd{D} \cont (L\difference \cl{a}) \inters (L\difference \cl{b})=L \difference (\cl{a} \join \cl{b})$. So $\bd{D}$ is a continuum of $L \difference (\cl{a} \join \cl{b})$.  By Proposition \ref{3-3}, we then have a separation $L\difference \bd{D}= D \join (L\difference \closure{D})$ and as a consequence also a separation $L\difference \bd{D}\cont \closure{D} \join \closure{(L\difference \closure{D})}$. But $x\in C$ and $C\cont L \difference \closure{D} \cont \closure{L \difference \closure{D}}$ because $C\inters \closure{D}=\void$ since $C\inters D=\void$ and $C\inters \bd{D} \cont D \inters \bd{D}=\void$. Hence $x\in \closure{L\difference{\closure{D}}}$. And $y\in D \cont \closure{D}$. So $\bd{D}$ is a continuum contained in $L\difference (\cl{a} \join \cl{b})$ which separates an element of $X$ from an element of $Y$.

\Rom{2}$\implic (N^+)$, assuming that $L$ is normal and strongly locally connected. Now that $L$ is strongly locally connected we can improve the proof that \Rom{2}$\implic$ (N). Let $\cl{a}, \cl{b}\cont L$ be disjoint closed sublocales and let $x\in\cl{a}$ and $y\in\cl{b}$ be such that $x\neq 1\neq y$. We need to prove that there exists a continuum contained in $L\difference (\cl{a} \join \cl{b})$ which separates an $x$ and $y$. Since $L$ is normal, there exist $\o{u},\o{v} \cont L$ disjoint open sublocales such that $\cl{a} \cont \o{u}$ and $\cl{b} \cont \o{v}$. Since $x\in \o{u}$, by Proposition \ref{strlocconisloccon}, there exists a component $C$ of $\o{u}$ such that $x\in C$. We now observe that $Y \inters (L\difference \closure{C}) \neq \void$. Indeed, since $\o{v} \inters \o{u}=\void$ and $\o{v}$ is open, by Proposition \ref{closureintopen}, it must be $\o{v} \inters \closure{\o{u}}=\void$ and so $y\in  \o{v} \cont L \difference \closure{\o{u}} \cont L \difference \closure{C}$. Then, by Proposition \ref{strlocconisloccon}, there exists a component $D$ of $L \difference \closure{C}$ such that $y\in D$. The rest of the proof proceeds as the proof that \Rom{2}$\implic$ (N) and we conclude that $\bd{D}$ is a continuum contained in $L\difference (\cl{a} \join \cl{b})$ which separates $x$ and $y$.
\end{proof}



\end{document}